\newcommand{\bbT}{\mathbb{T}}
\newcommand{\calC}{\mathcal{C}}
\newcommand{\calH}{\mathcal{H}}
\newcommand{\calM}{\mathcal{M}}
\theoremstyle{plain}
\numberwithin{equation}{section}
\def\a{{\alpha}}
\def\S{{\mathcal S}}
\newcommand{\R}{{\mathbb R}}
\newcommand{\Sph}{{\mathbb S{\rm ph} }}
\newcommand{\SY}{{\mathbb S }}
\newcommand{\HY}{{\mathbb H}}
\newcommand{\TY}{{\mathbb T{\rm or} }}
\newcommand{\N}{{\mathbb N}}
\newcommand{\Z}{{\mathbb Z}}
\theoremstyle{plain}
\newtheorem{theorem}{Theorem}[section]
\newtheorem*{theorem*}{Theorem}
\newtheorem*{corollary*}{Corollary}
\newtheorem{proposition}[theorem]{Proposition}
\newtheorem*{proposition*}{Proposition}
\newtheorem{lemmad}[theorem]{Dragging Lemma }
\newtheorem{lemma}[theorem]{Lemma}
\newtheorem*{lemma*}{Lemma}
\newtheorem*{example*}{Example}
\newtheorem*{definition*}{Definition}
\newtheorem*{notation*}{Notation}
\newtheorem{remark}[theorem]{Remark}
\newtheorem*{remark*}{Remark}
\title[Minimal Surfaces in Finite Volume Hyperbolic 3-Manifolds]{Minimal Surfaces in Finite Volume Hyperbolic 3-Manifolds $N$ and in $M \times \SY^1$, $M$ a Finite Area Hyperbolic Surface.}
\author{P. Collin}
\address{P.Collin, Institut de math\'ematiques de Toulouse,
Universit\'e Paul Sabatier, 118, route de Narbonne, 31062 Toulouse cedex ,France}
\email{collin@math.ups-tlse.fr   }
\author{L. Hauswirth}
\address{L. Hauswirth, Universit\'e Paris-Est, LAMA (UMR 8050), UPEMLV, UPEC, CNRS, F-77454, Marne-la-Vall\'ee, France}
\email{hauswirth@univ-mlv.fr}
\author{H. Rosenberg}
\address{H. Rosenberg, Instituto Nacional de Matematica Pura e Aplicada (IMPA) Estrada Dona Castorina 110, 22460-320, Rio de Janeiro-RJ, Brazil}
\email{rosen@impa.br}
\begin{document}

\begin{abstract} We consider properly immersed finite topology minimal surfaces $\Sigma$ in complete finite volume hyperbolic 3-manifolds $N$, and in $M  \times \SY^1$, where $M$ is a complete hyperbolic surface of finite area.  We prove $\Sigma$ has finite total curvature equal to $2 \pi$ times the Euler characteristic $\chi (\Sigma)$ of $\Sigma$, and we describe the geometry of the ends of $\Sigma$.
.
\end{abstract}

\thanks{{\it The authors were partially supported by the ANR-11-IS01-0002 grant.} \today}
\maketitle

\section{Introduction}
Let N denote a complete hyperbolic 3-manifold of finite volume.  An end ${\calM}$ of $N$ is modeled on the quotient of a horoball of the hyperbolic 3-space $\HY^3$, by a $\Z^2$ parabolic subgroup of the isometry group of $\HY^3$ leaving the horoball  invariant.  More precisely we consider the model of the half-space of $\HY^3=\{ (x,y,t) \in \R^3; y>0 \}$ with the metric $ds^2=\frac{dx^2 +dy^2+ dt^2}{y^2}$. Then an end of $N$ has a sub-end isometric to
$${\calM}(-1)= \{ (x,y,t) \in \R^3; y \geq y_0 >0\}$$ 
modulo a $\Z^2$-parabolic subgroup of isometries of $\HY^3$ leaving the planes $\{y=c\}$ invariant. The horosphere
$y$=constant quotient to tori $\bbT (y)$ in ${ \calM}(-1)$; $\bbT (y)$ has constant mean curvature one. Let
$c$ be a compact geodesic of $\bbT (1)$. Then $A(-1)=\{ (c,t); t \geq 1 \}$ is a minimal annulus immersed in
${ \calM}(-1)$, which we will call a standard cusp-end in ${ \calM}(-1)$

A complete surface  $M$ of constant curvature $K=-1$ and finite area has finite total curvature hence $M$ is conformally
diffeomorphic to a compact surface punctured in a finite number of points. Each end of $M$ (called a cusp end), denoted
$\calC$, is an annular end isometric to the quotient of a horodisk $H$ in the hyperbolic plane $\HY^2$
by a parabolic isometry $\psi$.

To describe the geometry of such ends we model $\HY^{2}$ by the upper half plane 
$$\HY^2 =\{ (x,y) \in \R^{2} ; y > 0 \}$$
with the metric $ds^2= \frac{dx^2+dy^2}{y^2}$. Then a cusp end $\calC$ of $M$ is isometric to $H / [\psi]$,
where $H= \{ (x,y) \in \R^2 ; y\geq 1\}$ is a horodisk and $\psi (x,y)=(x+\tau,y)$, for some $\tau \neq 0$.  

In $M \times \SY^1$, with the product metric, the ends become $\calM:=\calC \times \SY^1$, and are foliated by constant mean curvature tori $\bbT (y_1)=c(y_1) \times \SY^1$, where $c(y_1)= \{ (x,y) \in \HY^2; y=y_1 \} / [\psi]$. We consider $\SY^1 = \R / T(h)$, $T(h)$ the translation of $\R$ by some $h >0$ and
$$\calM=\cup_{y \geq y_0} \bbT (y)=( H / [\psi] ) \times (\R / T(h))=\{ (x,y,t) \in \R^3 ; y \geq y_0 \geq 1 \} / [\psi, T(h)].$$

Thus the ends of $N$ and those of $M \times \SY^1$ share many properties.  Both are parametrized by the same half-space of $\R^3$, and foliated by constant mean curvature tori $\bbT(y)$ ( curvature one half in $\calM$ and one in 
$\calM(-1)$). $\calM(-1)$ has constant sectional curvature $-1$ and the tori $\bbT(y)$ shrink exponentially when one flows by the geodesics y increasing.
In $\calM$, the horizontal cycles $c(y)$ shrink exponentially along the $y$ increasing flow and the $t$ cycles are of constant length $h$.  Subsequently we will develop the geometry of surfaces in these ends.

Now let $\Sigma$ be a properly embedded minimal surface in $N$ or $M \times \SY^1$ of finite topology; so that $\Sigma$ has a finite number of annular ends $\{ A_j\}$ for $1\leq j \leq k$.  Since $\Sigma$ is proper, each end $A_j$ of $\Sigma$ is in some end $\calM$ of $M \times \SY^1$ or in some end $\calM (-1)$ of $N$. We denote by $E$ a connected component of a lift of an end $A$   of $\Sigma$, $E$  in $\HY \times \R$ or $\HY^3$. 

We will now describe the model ends of minimal annuli in $\calM$ and $\calM (-1)$. In $\calM (-1)$ the model
end is the standard cusp end $A(-1)$ we previously defined.

 In $\calM$, there are essentially three model ends. In $\calM$, we define $A_{(p,q)}$ to be the annular end that is the quotient of a (euclidean)
half-plane $E_{(p,q)}$ orthogonal to the plane $\{ (x,y,t) \in \R^3; y=1\}$ and of slope $qh/p\tau$.
For $(p,q)=(1,0)$, the end 
$$E_{(1,0)} (t_0)=  \{ (x,y,t) \in \R^3 ; y \geq 1, t=t_0 \} \hbox{ and } A_{(1,0)}=E_{(p,q)} /[\psi]$$
 is a cusp end of $M$(horizontal). For $(p,q)=(0,1)$, it is the product of a horizontal geodesic ray of $M$ and $\SY^1$. The end 
 $$E_{(0,1)} (x_0)= \{ (x,y,t) \in \R^3 ; y \geq 1, x=x_0 \} \hbox{ and } A_{(0,1)}=E_{(0,1)}/T(h).$$
 For $(p,q) \neq \{ (0,1), (1,0)\}$, we think of $A_{(p,q)}$ as a helicoid with axis at the cusp at infinity. It is the quotient  of
$$E_{(p,q)} (c_0)= \{ (x,y,t) \in \R^3 ; y \geq 1 , p\tau t - qh x =c_0 \} \hbox{ and } A_{(p,q)}=E_{(p,q)}/[\psi, T(h)].$$

We will prove that a properly immersed annular end $A$ in $\calM$ or in $\calM (-1)$ has finite total curvature and is asymptotic to a standard end $A_{(p,q)}$ in $\calM$ or a standard cusp end $A(-1)$ in $\calM (-1)$.
The main theorem of the paper is:

\begin{theorem}
\label{Maintheorem}
Consider a complete surface $M$ with curvature $K=-1$ and finite area and $N$ a complete hyperbolic $3$-manifold of finite volume. Let $\Sigma$ be a properly immersed minimal surfaces in $N$ or in $M \times \SY^{1}$ with finite topology.
Then the surface $\Sigma$ has finite total curvature and each end $A$ of $\Sigma$ is asymptotic to a standard cusp-end
$A(-1)$ in $\calM (-1)$ or to a standard end $A_{(p,q)}$ in $\calM$:

\begin{enumerate}
\item[(i)] $A_{(p,0)}$ a horizontal cusp $\calC \times \{t_{0}\}$
\item[(ii)] $A_{(0,q)}$ a vertical plane $\gamma \times \SY^{1}$
\item[(iii)] $A_{(p,q)}$ a helicoidal end with axis at infinity.
\end{enumerate}
Moreover 
$$\int_{\Sigma} K dA = 2 \pi \chi (\Sigma).$$ 
\end{theorem}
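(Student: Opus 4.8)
The plan is to reduce the global statement to a local study of the ends, since on a compact piece of $\Sigma$ the total curvature is automatically finite. First I would take a finite-topology properly immersed minimal $\Sigma$ and decompose it into a compact core $\Sigma_0$ with boundary together with finitely many annular ends $A_1,\dots,A_k$, each $A_j$ contained (by properness) in one of the model ends $\calM$ or $\calM(-1)$. The heart of the argument is the \emph{end analysis}: one must show each $A_j$ has finite total curvature and is asymptotic to one of the standard models $A(-1)$, $A_{(p,0)}$, $A_{(0,q)}$, or $A_{(p,q)}$. This is precisely the claim announced just before the theorem, so I would prove it as a sequence of preparatory propositions: (1) a curvature decay estimate forcing $|K| \to 0$ exponentially as one flows toward the cusp in the $y$-increasing direction, obtained via a stability-type or monotonicity argument together with the exponential shrinking of the tori $\bbT(y)$; (2) from the decay, extract that a lift $E$ of $A_j$ becomes $C^{2}$-asymptotic to a minimal surface in the flat/hyperbolic model geometry of the cusp; (3) classify these limiting model ends — in $\HY^3$ the only embedded minimal annulus invariant (asymptotically) under the parabolic $\Z^2$ is the standard cusp annulus $A(-1)$ over a closed geodesic of $\bbT(1)$, while in $\HY^2\times\R$ the limit must be a vertical Euclidean half-plane $E_{(p,q)}$, whose quotient gives cases (i)–(iii) according to the slope.

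Granting the asymptotic description of the ends, the curvature integral follows from a Gauss--Bonnet computation. On each truncated end $A_j \cap \{y \le R\}$ I would apply Gauss--Bonnet:
\begin{equation}
\int_{A_j \cap \{y \le R\}} K \, dA = 2\pi \chi(A_j) - \int_{\partial} k_g \, ds - (\text{inner boundary terms from } \partial \Sigma_0),
\end{equation}
and then let $R \to \infty$. Because $\chi$ of an annulus is $0$, and because the outer boundary curves $A_j \cap \{y = R\}$ have geodesic curvature and length both decaying (the horizontal cycles shrink exponentially, or the curve converges to a geodesic of the model), the boundary integral at $y = R$ tends to $0$; a parallel statement holds for the standard cusp $A(-1)$ since its boundary tori collapse. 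Summing over $j$ and adding the Gauss--Bonnet contribution of the compact core $\Sigma_0$, the interior boundary terms cancel in pairs (each curve of $\partial \Sigma_0$ appears with opposite orientation from $A_j$), and one is left with $\int_\Sigma K\,dA = 2\pi \chi(\Sigma_0) = 2\pi \chi(\Sigma)$, since capping off each end with a disk or equivalently noting $\chi(\Sigma) = \chi(\Sigma_0) + \sum_j \chi(A_j) = \chi(\Sigma_0)$ recovers the stated value. Care is needed that $\int_{A_j} |K|\, dA < \infty$, which is exactly the finite-total-curvature conclusion and is guaranteed by step (1) above: the exponential decay of $|K|$ against the at-most-linear growth (in hyperbolic arclength) of the end makes the integral converge.

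The main obstacle, I expect, is step (1)–(2): establishing the curvature decay and the consequent uniform asymptotic convergence of the end to a model, \emph{without} assuming a priori that the end has finite total curvature or even bounded curvature. The standard route is a blow-up/compactness argument — if curvature did not decay, one could rescale around a sequence of points escaping into the cusp and extract (using that the ambient geometry near the cusp looks, after rescaling, more and more like a fixed homogeneous space) a nonflat complete minimal surface contradicting either a half-space theorem or the parabolic periodicity. Making this rigorous requires controlling the ambient geometry of the cusp (the metric on $\calM(-1)$ is genuinely hyperbolic, not flat, and one must track how curvature and injectivity radius behave under the natural rescaling), and handling the periodicity under the $\Z^2$ (resp. $\Z$) parabolic group so that the limit surface is itself invariant and hence classifiable. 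Once that convergence is in hand, identifying which of the finitely many model ends arises, and verifying the decay of the boundary terms in Gauss--Bonnet, is comparatively routine. A secondary technical point is passing from an immersed end to an embedded one (or arguing directly in the immersed category) so that the model classification applies; properness plus the annular topology should suffice to make each end eventually embedded.
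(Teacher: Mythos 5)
Your Gauss--Bonnet bookkeeping at the end matches the paper's, but the core of your argument --- steps (1)--(2), which you yourself flag as ``the main obstacle'' --- is exactly where the proposal has a genuine gap, and the route you sketch is not the one that works in the paper and is not carried far enough to be checkable. You propose to get curvature decay by a blow-up/compactness argument and then deduce $C^2$-convergence to a model end. But nothing in your sketch explains why a blow-up limit would contradict anything: the end is only invariant under a single parabolic element (the lift $E$ of an annular end is periodic under $\psi^p\circ T(h)^q$, not under the full $\Z^2$), there is no a priori stability to invoke, and even granting bounded curvature you have not shown that the end converges to \emph{one} standard end rather than oscillating: for instance, an end of type $(p,0)$ trapped between two horizontal slices $t=\pm c$ still requires a separate argument (in the paper, the Scherk-type graph barriers of Section 7 and Proposition \ref{coordonee3}) to show the third coordinate has a single limit $a$ at infinity. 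Likewise your step (3) asserts the classification of limiting model ends in $\HY^3$ and $\HY^2\times\R$ without proof; that classification is essentially the content of the theorem, not an input to it.

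The paper's actual mechanism is different and is missing from your proposal: first one shows the lift $E$ is a half-plane and is \emph{trapped} between two explicit standard ends, using families of ruled minimal barriers $S^\lambda_T$ (Section 3) and the maximum principle together with the periodicity of $E$ (Theorem \ref{trapping}); then, instead of proving curvature decay directly, one proves that a sub-end is \emph{transverse to a Killing field} ($\partial/\partial x$ or $\partial/\partial t$ according to the type), hence a Killing graph, hence stable with bounded curvature. That transversality is obtained by contradiction via a genuinely global argument: placing a stable compact annulus $F_0$ tangent to $E$ at a putative bad point, producing two components of $E\cap F_0^-$ (Proposition \ref{componentcatenoid}), dragging intersection points along families of small catenoids (the Dragging Lemma), building a loop in $E$ that bounds a disk because $E$ is a half-plane, and deriving a parity contradiction on intersection numbers with a vertical plane. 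Your proposal contains no substitute for this trapping-plus-transversality machinery, so as written the key claims (bounded curvature, convergence to a single standard end, and hence finiteness of $\int_A|K|\,dA$) remain unproved.
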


{\bf Corollaries of the theorem \ref{Maintheorem}} Combining the formula for the total curvature
of $\Sigma$ in theorem \ref{Maintheorem}, with the Gauss equation, we obtain topological
obstructions for the existence of proper minimal immersions of a finite topology
surface $\Sigma$ into $N$ or $M \times \SY^1$, of a given topology. 

For example, there is no such proper minimal immersion of a plane $\R^2$ into $N$
or $M \times \SY^1$. In $N$ there is no proper minimal immersion of the sphere $\SY^2$
with $n$ punctures; $n=0,1,$ or $2$. A proper minimal immersion of $\SY^2$ with two punctures
(an annulus) in $M \times \SY^1$, is necessarily $\gamma \times \SY^1$, $\gamma$ a complete
geodesic of $M$.

More generally, suppose $\Sigma$ is an orientable surface of genus $g$ with $n$ punctures, $n \geq 0$.
Then $\chi (\Sigma)=2-2g-n$, so if $\Sigma$ can be properly
minimally immersed in $N$ or $M \times \SY^1$, it follows from theorem \ref{Maintheorem}
and the Gauss equation

$$\int_\Sigma K_\Sigma= 2 \pi (2-2g -n)= \int_\Sigma K_e + \int_\Sigma K_\sigma,$$
where $K_e$ and $K_\sigma$ are the extrinsic and sectional curvatures of $\Sigma$ respectively.
Since $-1 \leq K_\sigma \leq 0$ in $M \times \SY^1$, $K_\sigma =-1$ in $N$, and $K_e \leq 0$,
we have
$$2-2g-n \leq 0$$
and equality if and only if $K_e= K_\sigma =0$.

This equality cannot occur in $N$ (since $K_\sigma =-1$) and equality in $M \times \SY^1$
yields $\Sigma$ is vertical and $g$ is $0$ or $1$. When $g=0$, then $n=2$ and $\Sigma = \gamma \times \SY^1$ $\gamma$ a complete, non compact geodesic of $M$.

When $2<2g +n$ then if $g=0$, one can not have $n \leq 2$. So excluding the equality case we
discussed above, there is no proper minimal immersion of $\SY^2$ with $0,1,$ or $2$ punctures, in $N$
or $M \times \SY^1$.

In $N$, one obtains an area estimate. If $\Sigma$ is properly minimally immersed in $N$ then
$$2\pi (2-2g -n) = \int_\Sigma K_e - |\Sigma|,$$
so $|\Sigma|= \int_\Sigma K_e + 2 \pi (2g + n -2) \leq 2 \pi (2g + n -2)$ and equality precisely when
$\Sigma$ is totally geodesic. Do such totally geodesic immersions exists in $N$?

We have $0< 2g+n-2$, so if $2g+n-2 \leq 0$, the immersion $\Sigma$ does not exist in $N$. If
$2g > 2-n$, can $\Sigma$ be properly minimally immersed in $N$?.

The paper is organized as follows.  We will begin considering surfaces in $M \times \SY^1$.   First we describe some examples of properly embedded minimal surfaces
of finite topology in $M \times \SY^1$. We start with $M$ a 3-punctured sphere, then $M$ a sphere with $2n$
punctures, and $M$ a once punctured torus. We hope to convey to the reader the wealth of interesting examples
in these spaces.

In section 2, we describe some properties of the standard examples $A_{(p,q)}$ in the cusp ends $\calM$ of $M \times \SY^1$.
We construct auxiliary minimal surfaces needed for the sequel.

In section 3 we begin the study  of a lift $E \subset H \times\R$ of an annular end $A$ of $\Sigma \cap \calM$. We prove that a subend of $A$ is trapped between two standard ends $A_{(p,q)}$ that are close at infinity; "close" will be defined later.

In section 4, we study compact annuli that we will use in the proof of the theorem. In section 5, we study the limit of a family of
Scherk type graphs in $\HY^2$ which are converging to $0$ and we use this sequence to prove that
the third coordinate of an end of type $(1,0)$ has a limit at infinity.

In section 6, we prove that a trapped subend of $A$ is a killing graph, hence has bounded curvature. 
Then in sections 7,8, and 9, we prove the main theorem.

\section{Examples in $M \times \SY^1$}

The first examples in $M \times \SY^1$ that come to mind are the horizontal slices
$\Sigma=M \times \{ c\}$ and the vertical annuli (or totally geodesic tori), $\Sigma= \gamma \times \SY^1$,
$\gamma$ a complete geodesic (perhaps compact) of $M$.

We describe five examples; $M$ will be a sphere with three or four punctures or a once punctured torus,
and have a complete hyperbolic metric of finite area. Denote by $\Sph (k), k=3$ or $4$ such a hyperbolic
sphere and $\TY (1)$ a once punctured hyperbolic torus.

\vskip 0.2cm
{\bf Example 1.} $\Sigma$ an embedded minimal surface in $\Sph (3) \times \SY^1$ with three ends;
two helicoidal and the other horizontal. The domains and notation we now introduce
will be used in all the examples we describe.

Let $\Gamma$ be the ideal triangle in the disk model of $\HY^2$ with vertices $A=(0,1), B=(0,-1), C=(-1,0)$ and
sides $a,b,c$ as indicated in figure  \ref{fig:figure1}.
\begin{figure}
\includegraphics{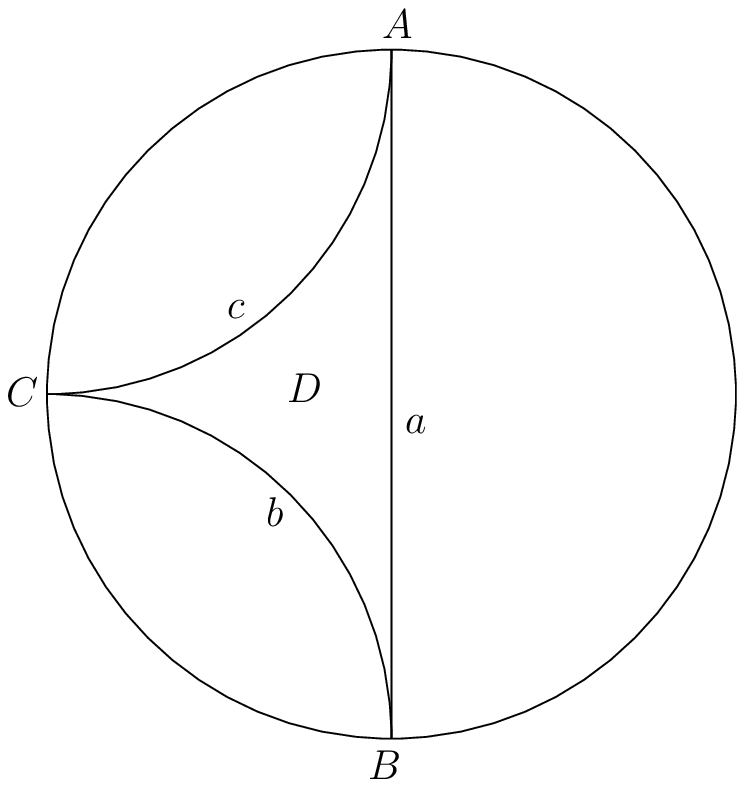}
\caption{Ideal triangle $(ABC)$ in $\HY^2$}
\label{fig:figure1}
\end{figure}

Let $\Sigma$, be the minimal graph over the domain $D$ bounded by $\Gamma$, taking the values $0$ on
$b$ and $c$ and $h>0$ on $a$. Extend $\Sigma$, to an entire minimal graph $\tilde \Sigma$ over $\HY^2$ by
rotation by $\pi$ in all the sides of $\Gamma$, and the sides of the triangles thus obtained.

In figure \ref{fig:figure2}, we indicate some of the reflected triangles and the values of the graph $\tilde \Sigma$ on their sides.
\begin{figure}
\includegraphics{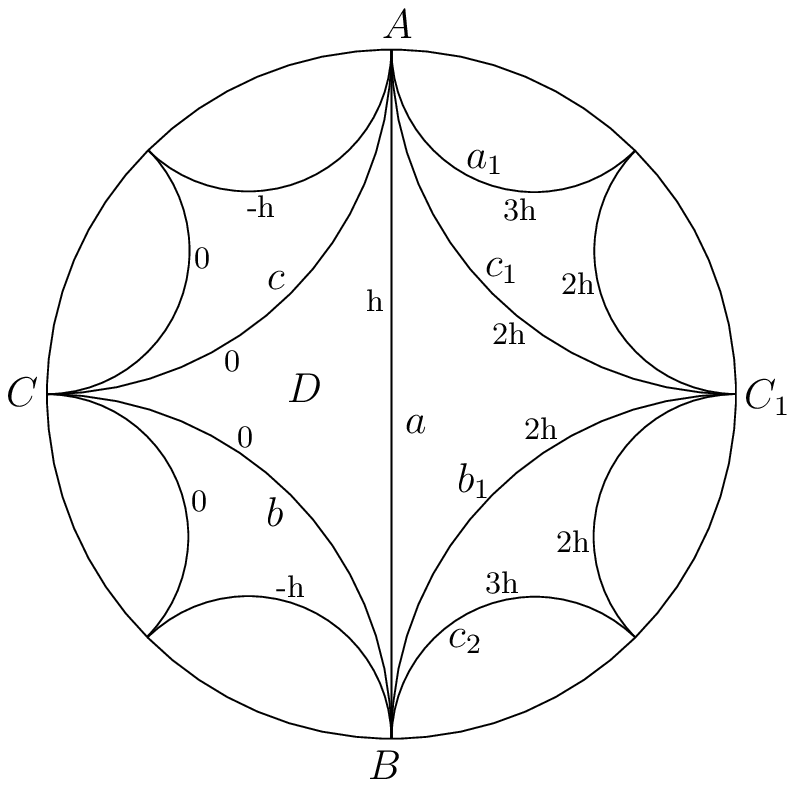}
\caption{Value of the graph $\tilde \Sigma$ on geodesics}
\label{fig:figure2}
\end{figure}

Let $D$ be the domain bounded by $\Gamma$. Let $\psi _A$ be the parabolic isometry with fixed point $A$ which takes the geodesic $c$ to $c_1$ and $a$ to $a_1$; $\psi_A=R_{c_1}R_a$, where $R_\gamma$ denotes
reflection in the geodesic $\gamma$. Let $\psi_B$ be the parabolic isometry of $\HY^2$ leaving $B$ fixed, taking $b$ to
$b_1$ and $a$ to $c_2$; $\psi_B=R_{b_1}R_a$. 

Notice that the group of isometries of $\HY^2 \times \R$, generated by $T(2h) \circ \psi_A$ and $T(2h) \circ \psi_B$,
leaves $\tilde \Sigma$ invariant.

Let $M$ be the $3$-punctured sphere obtained by identifying the sides of $D \cup R_a (D)$ by $\psi_A, \psi_B$
($c$ with $c_1$, $b$ with $b_1$). $M$ is hyperbolic and has finite area.

Let $\Sigma_2$ be the graph in $\tilde \Sigma$ over $D \cup R_a (D)$. Then the multi-graph
$\cup_{k \in \Z} \bbT _{k2h} (\Sigma_2)$ passes to the quotient $M \times (\R / T(2h))$ to give a 
complete embedded minimal surface $\Sigma$ with $3$-ends; two helicoidal and the other horizontal.
$\Sigma$ is a $3$-punctured sphere, has total curvature $-2\pi$ and $\Sigma$ is stable ($\Sigma$
is transverse to the killing field $\partial / \partial t$).

\vskip 0.2cm
{\bf Example 2.} $\Sigma$ an embedded minimal surface in $\Sph (4) \times \SY^1$, the sphere with four ends.
Let $Q$ be the ideal quadrilateral $D \cup R_a (D)$, and define $F=Q \cup R_{c_1} (Q)$.
Let $M$ be the quotient of $F$ obtained by identifying the sides of $\partial F$ as follows:

\begin{enumerate}
\item Identify $c$ with $R_{c_1} (c)$ by the parabolic isometry at $A$ taking $c$ to $R_{c_1} (c)$,
\item Identify $b$ with $R_{c_1} (b)$ by the hyperbolic isometry taking $b$ to $R_{c_1} (b)$ and,
\item Identify $b_1$ with $R_{c_1} (b_1)$ by the parabolic isometry at $C_1$ taking $b_1$ to $R_{c_1} (b_1)$.
\end{enumerate}
$M$ is a 4-punctured sphere. A more (apparently) symmetric picture of $M$ is obtained by changing the picture
by the isometry taking $A$ to $A$ and $C_1$ to $B$ as indicated in the figure \ref{fig:figure3}.

\begin{figure}
\includegraphics{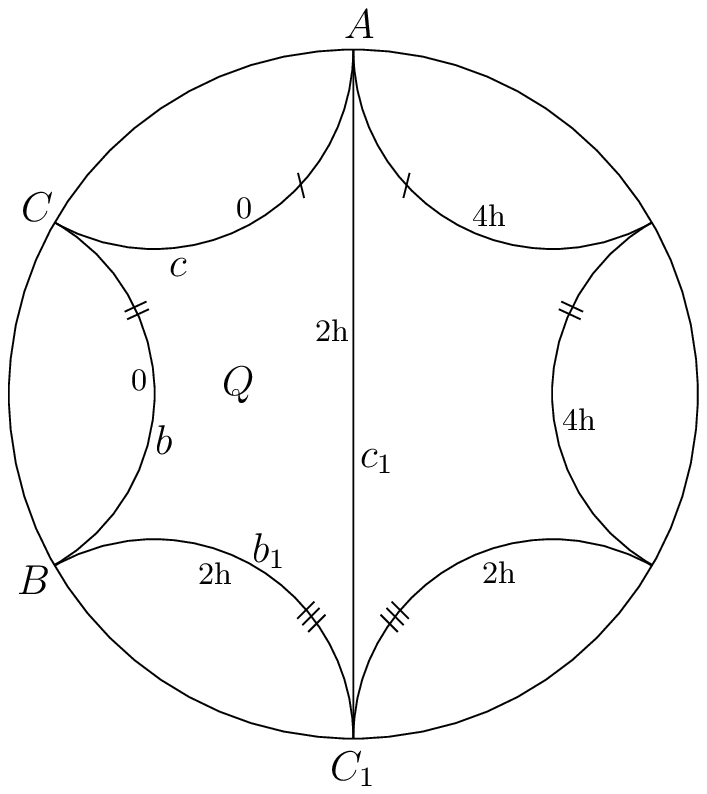}
\caption{$M$ is a 4-punctured sphere}
\label{fig:figure3}
\end{figure}

Then the graph of $\tilde \Sigma$ over $F$ yields a embedded minimal surface $\Sigma$ in 
$(M \times \R) / [T(4h)]$. $\Sigma$ has two horizontal ends and two helicoidal ends of type $E(1,1)$.
$\Sigma$ is also stable.

\vskip 0.2cm
{\bf Example 3.} A compact singly periodic Scherk surface; M a once punctured torus. This surface
is constructed in \cite{M-R-R}; we describe it here. Let $Q=D \cup R_a (D)$ and
$\gamma_1,\gamma_2$ be minimizing geodesics joining opposite sides of $D$; figure  \ref{fig:figure4}.
\begin{figure}
\includegraphics{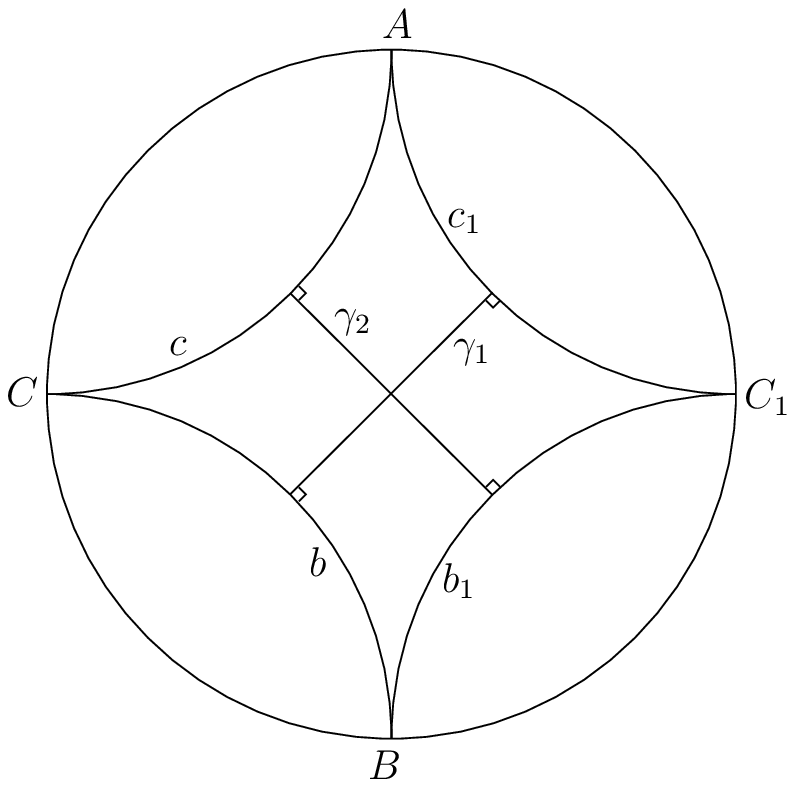}
\caption{$M$ is a once punctured torus}
\label{fig:figure4}
\end{figure} 
 In $\HY^2 \times \R$, we desingularize the intersection of the planes $\gamma_1 \times \R$ and
$\gamma_2 \times \R$ in the usual manner to create a Scherk surface invariant under a vertical translation.
We describe this. Let $\alpha$ and $\beta$ be the segments of $\gamma_1, \gamma_2$ in the first and
fourth quadrants respectively. Form a polygon in $\HY^2 \times \R$ by joining 
to $(\alpha \times \{h\}) \cup (\beta \times \{0\})$ by the two vertical segments joining 
$\alpha (\Gamma) \times \{0\}$ to  $\alpha (\Gamma) \times \{h\}$, and joining
$\beta (\Gamma) \times \{0\}$ to  $\beta (\Gamma) \times \{h\}$; $\alpha (\Gamma)$ denotes
the endpoint of $\alpha$ on $\Gamma$ (similarly for $\beta ( \Gamma)$). This polygon
bounds a least area minimal disk $D_1$; figure \ref{fig:figure5}

\begin{figure}
\includegraphics{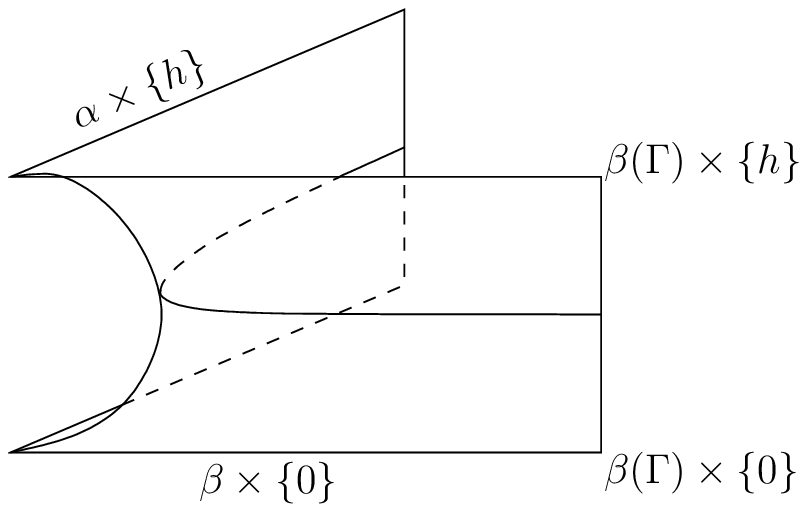}
\caption{A least area disk $D_1$}
\label{fig:figure5}
\end{figure} 

Successive symmetries in all the horizonntal sides yields a "Scherk" type surface 
in $\HY^2 \times \R$ bounded by $4$ vertical geodesics, invariant by vertical translation by $2h$.

We now identify opposite sides of $Q$ by the hyperbolic translations $T(\gamma_1), T(\gamma_2)$,
along $\gamma_1$ and $\gamma_2$. This gives a once punctured torus $M$.
The Scherk surface passes to the quotient to give a compact minimal surface $\Sigma$
with $\partial \Sigma = \emptyset$, in $M \times (\R / T(2h))$.

\vskip 0.2cm
{\bf Example 4.} A singly periodic Scherk surface with 4 vertical annular ends; $M$ a once punctured torus.
Now we "rotate" example 4 by $\pi/4$. Let $\alpha_1,\alpha_2$ be the complete geodesics joining
the opposite vertices of $\partial Q$; $\alpha_1,\alpha_2$ are the $x$ and the $y$ axis in the unit
disc model. Again we construct Plateau disks bounded by the polygon of figure  \ref{fig:figure6}.

\begin{figure}
\includegraphics{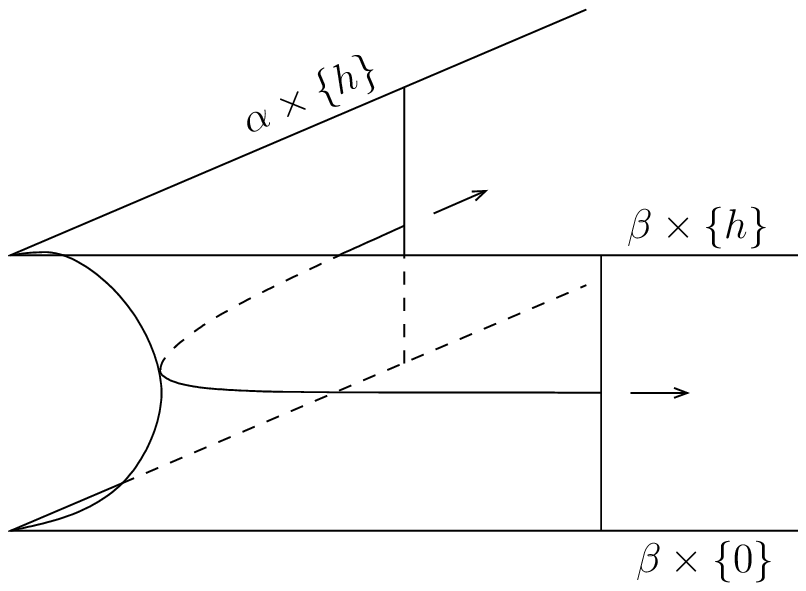}
\caption{A piece of singly periodic Scherk surface with $4$ vertical annular ends}
\label{fig:figure6}
\end{figure}

We know that when the vertical geodesic segments diverge along $\beta$, the plateau
solutions converge to a complete embedded surface in $\HY^2 \times \R$ with boundary
$\{ (x,0) / x \geq 0 \} \cup \{ (y,0) / y \geq 0\} \cup \{ (x,h) / x \geq 0\} \cup \{ (y,h) / y \geq 0 \}$.

The symmetries of this surface along all the edges yields a singly periodic Scherk surface in
$Q \times \R$, invariant by $T(2h)$.

As in example 4, we identify the opposite sides of $Q$ by hyperbolic translations to obtain a torus
$M$ with one puncture. This gives the Scherk surface $\Sigma$ in $M \times \R / [T(2h)]$, with
four vertical annular ends.

We remark that one can quotient $\partial Q$ by parabolic isometries to obtain this Scherk surface
in $M \times \SY^1$ where $M$ is now a 4-punctured sphere.

\vskip 0.2cm
{\bf Example 5.} A helicoid with  helicoidal ends in $M \times \SY^1$, $M$ a once punctured torus.
It is convenient to describe this example in $M \times \SY^1$ where $M$ is the once punctured torus
obtained from the ideal quadrilateral $Q_1$ in $\HY^2$ with the 4 vertices $(\pm \frac{1}{\sqrt 2} \pm \frac{i}{\sqrt 2})$,
by identifying opposite sides.

Let $S$ be the third quadrant of $Q_1$: $S=\{ (x,y) \in Q_1 ; x \leq 0, y \leq 0 \}$. For $h >0$,
let $\Sigma_1$ be the minimal graph over $S$ with boundary values indicated in figure  \ref{fig:figure7}.

\begin{figure}
\includegraphics{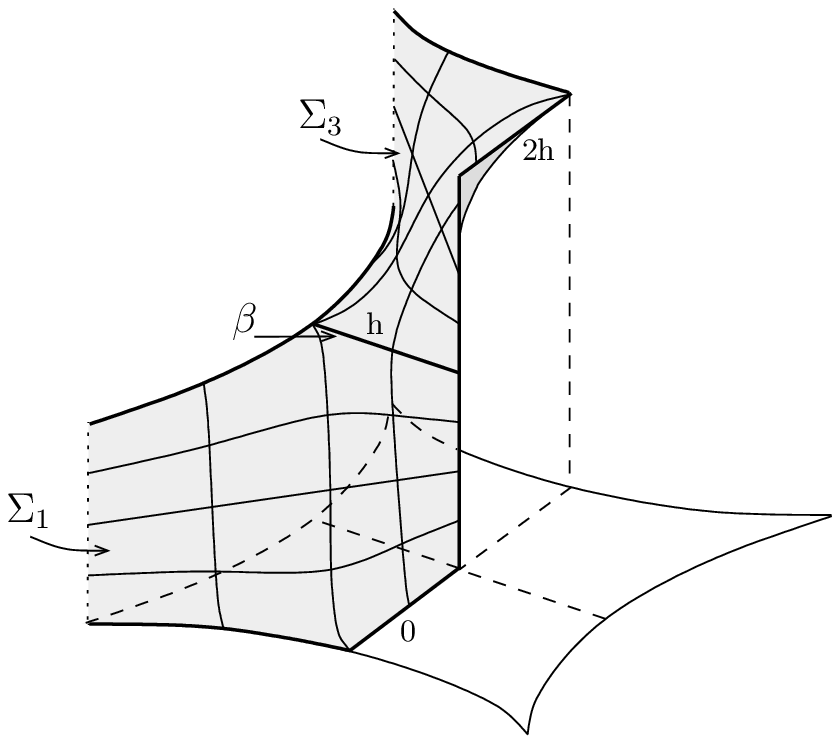}
\caption{$\Sigma_1$ be a minimal graph over $S$}
\label{fig:figure7}
\end{figure}

Let $\Sigma_3$ be the reflection of $\Sigma_1$
through $\beta$ (cf figure \ref{fig:figure7}); $\Sigma_3$ is between heights $h$ and $2h$ and is a graph over the second quadrant
of $Q_1$. Then rotate $\Sigma_1 \cup \Sigma_3$ by $\pi$ through the vertical axis between $(0,0)$ and $(0, 2h)$, to obtain
$\Sigma_2 \cup \Sigma_4$; $\Sigma_4$ is a graph over the fourth quadrant of $Q_1$. $\Sigma$ is the union of the four pieces 
$\Sigma_1$, through $\Sigma_4$, identified along the boundaries as follows.

First we consider identifying opposite sides of $Q_1$ be the hyperbolic translations sending the opposite side to the other.
Then we can quotient by $T(2h)$ or by $T(4h)$. The first quotient gives an non orientable surface in $M \times \SY^1$ with one helicoid type end. The second gives an orientable surface of total curvature $-8\pi$ with two helicoidal type
ends (it is a double cover of the first example). Topologically the first example is the connected sum of a once punctured torus and a projective plane. The second surface is 2 punctured orientable surface of genus two.

The reader can see the helicoidal structure of $\Sigma$ by going along a horizontal geodesic on $\Sigma$
at $h=0$, from one puncture to the other. Then spiral up $\Sigma$ along a helice going to the horizontal geodesic
at height $h$. Continue along this geodesic to the other (it's the same) puncture and spiral up the helices on $\Sigma$ to
height $2h$. If we do this right, we are back where we started. 

\section{Barriers in $M \times \SY^1$}
We construct barriers by solving the mean curvature equation of ruled surfaces.  These barriers will be used to prove the Trapping Theorem in section 4.   In the model
$\HY^2 \times \R =\{ (x,y,t) \in \R^{3} ; y > 0 \}$, we consider surfaces 

$$X:(u,v) \to (u, \alpha (v), v+ \lambda u)$$
for a $\calC ^2$ real  positive function of one variable $v \to \a (v)$  defined on some interval $I$.

\begin{lemma}
\label{meancurvature}
The mean curvature $H$ of the surface $X:(u,v) \to (u, \alpha (v), v+ \lambda u)$ immersed in
$\HY^2 \times \R =\{ (x,y,t) \in \R^{3} ; y > 0 \}$ with the metric $ds^2= \frac{dx^2+dy^2}{y^2}+dt^2$ is given by

 $$2H=\frac{-\a^2}{Z^3} \left( \a'' (1+\lambda^2\a^2)+\a(1+ \lambda^2 ({\a'})^{2})    \right)$$

\end{lemma}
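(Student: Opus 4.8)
The plan is to compute the mean curvature directly from the definition, using the immersion $X(u,v) = (u, \alpha(v), v + \lambda u)$ into the Riemannian product $\HY^2 \times \R$ with metric $ds^2 = \frac{dx^2 + dy^2}{y^2} + dt^2$. First I would record the coordinate tangent vectors $X_u = (1, 0, \lambda)$ and $X_v = (0, \alpha', 1)$, and then compute the first fundamental form using the ambient metric at the point where $y = \alpha(v)$: this gives $E = \langle X_u, X_u\rangle = \frac{1}{\alpha^2} + \lambda^2$, $F = \langle X_u, X_v \rangle = \lambda$, and $G = \langle X_v, X_v \rangle = \frac{(\alpha')^2}{\alpha^2} + 1$. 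The quantity $Z$ appearing in the statement should turn out to be (a multiple of) $\sqrt{EG - F^2}$, the area element; a quick check gives $EG - F^2 = \frac{1}{\alpha^2}\bigl(\frac{(\alpha')^2}{\alpha^2} + 1 + \lambda^2\bigr)$, so I expect $Z = \sqrt{(\alpha')^2 + \alpha^2 + \lambda^2\alpha^4}$ or something proportional, and I would fix the precise normalization by matching dimensions against the claimed formula.

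Next I would produce a unit normal $n$ to the surface. Since the ambient metric is a warped/product metric rather than Euclidean, I cannot just take the Euclidean cross product; instead I would find a vector $\nu = (a,b,c)$ with $\langle \nu, X_u\rangle = \langle \nu, X_v \rangle = 0$ using the ambient inner product, then normalize. Solving the two linear equations: orthogonality to $X_u$ gives $\frac{a}{\alpha^2} + \lambda c = 0$, and orthogonality to $X_v$ gives $\frac{b\alpha'}{\alpha^2} + c = 0$; so up to scale $\nu = (\alpha^2 \lambda, \,\alpha^2/\alpha' \cdot(\cdots), \ldots)$ — I would clear denominators to get a clean unscaled normal and then divide by its ambient norm, which will again involve $Z$.

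The main computational step is the second fundamental form. For a warped product metric the cleanest route is either (a) compute the Christoffel symbols of $\HY^2 \times \R$ (only the $\HY^2$ factor contributes: $\Gamma^x_{xy} = \Gamma^x_{yx} = -1/y$, $\Gamma^y_{xx} = 1/y$, $\Gamma^y_{yy} = -1/y$, all others zero) and then use $\mathrm{II}(X_i, X_j) = \langle \nabla_{X_i} X_j, n\rangle$ where $\nabla$ is the ambient connection, or (b) use the formula $h_{ij} = \langle \partial_i \partial_j X, n \rangle + (\text{Christoffel correction terms})$. I expect the $v$-derivatives to bring in $\alpha''$ and the connection corrections to bring in the remaining $\alpha$ and $(\alpha')^2$ terms; the coefficient $\lambda$ enters through the normal and through $X_u$. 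Finally I would assemble $2H = \frac{E g_{22} - 2 F g_{12} + G g_{11}}{EG - F^2}$ (with $g_{ij}$ the second fundamental form coefficients), simplify, and check it collapses to the stated $2H = \frac{-\alpha^2}{Z^3}\bigl(\alpha''(1 + \lambda^2\alpha^2) + \alpha(1 + \lambda^2(\alpha')^2)\bigr)$. The hard part is bookkeeping: keeping the warped-product Christoffel corrections straight and not dropping a factor of $\alpha$ when evaluating the metric at $y = \alpha(v)$; once the normal and the connection terms are pinned down, the algebra to reach the final quotient is routine but needs care to recognize the common factor $Z^3$ in the denominator.
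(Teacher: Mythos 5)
Your plan is sound and would prove the lemma, but it follows a genuinely different route from the paper. You propose the classical graph-of-fundamental-forms computation: first fundamental form $E,F,G$, a unit normal obtained by solving the ambient orthogonality equations, second fundamental form via the ambient Christoffel symbols of $\HY^2\times\R$ (your list of nonzero symbols is correct), and then $2H=\frac{E\,g_{22}-2F\,g_{12}+G\,g_{11}}{EG-F^2}$. The paper instead never writes the second fundamental form: it takes the unnormalized normal $V=E_1\wedge E_2=(-\lambda\alpha'\alpha^2,\,-\alpha^2,\,\alpha')$ (exactly the vector your two linear equations produce once you clear denominators), sets $Z^2=|V|^2=(\alpha')^2(1+\lambda^2\alpha^2)+\alpha^2$, and computes $-2H=\operatorname{div}(V/Z)=\frac{1}{Z^3}\bigl(Z^2\operatorname{div}V-\tfrac12 V\cdot(Z^2)\bigr)$, using $\operatorname{div}(\partial_y)=-2/y$ and $\operatorname{div}(\partial_x)=\operatorname{div}(\partial_t)=0$. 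The divergence route needs only the ambient connection applied once to $V/Z$ and keeps all the $\lambda$-dependence in two scalar quantities, which is why the final expression falls out quickly; your route is more standard and transparent but carries more bookkeeping (three second-fundamental-form entries, each with Christoffel corrections), so expect the algebra to be longer before the common factor emerges.

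One concrete slip to fix before you execute the plan: your quick check of $EG-F^2$ is off. With $E=\frac1{\alpha^2}+\lambda^2$, $F=\lambda$, $G=\frac{(\alpha')^2}{\alpha^2}+1$ one gets
$$EG-F^2=\frac{(\alpha')^2+\alpha^2+\lambda^2\alpha^2(\alpha')^2}{\alpha^4}=\frac{(\alpha')^2(1+\lambda^2\alpha^2)+\alpha^2}{\alpha^4},$$
not $\frac1{\alpha^2}\bigl(\frac{(\alpha')^2}{\alpha^2}+1+\lambda^2\bigr)$; consequently $Z^2=(\alpha')^2(1+\lambda^2\alpha^2)+\alpha^2$ (the ambient norm squared of $V$), not $(\alpha')^2+\alpha^2+\lambda^2\alpha^4$, and $\sqrt{EG-F^2}=Z/\alpha^2$, which is precisely where the factor $\alpha^2$ in the numerator of the stated formula comes from. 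With that corrected, your assembly formula for $2H$ will indeed collapse to $\frac{-\alpha^2}{Z^3}\bigl(\alpha''(1+\lambda^2\alpha^2)+\alpha(1+\lambda^2(\alpha')^2)\bigr)$.
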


\begin{proof} In the model of $\HY^2 \times \R =\{ (x,y,t) \in \R^{3} ; y > 0 \}$ with the metric $ds^2= \frac{dx^2+dy^2}{y^2}+dt^2$,
the non zero terms of the connection are given by
$$
\nabla_{\partial /\partial x} \partial /\partial x = \tfrac1y  \partial /\partial y  \quad \nabla_{\partial /\partial y} \partial /\partial y = -\tfrac1y \partial /\partial y $$
$$\nabla_{\partial /\partial x} \partial /\partial y = \nabla_{\partial /\partial y} \partial /\partial x= -\tfrac1y \partial /\partial x 
$$
The tangent space is generated by
\[\begin{aligned}
dX ( \partial /\partial u)&= E_{1}=\partial /\partial x + \lambda  \partial /\partial t=(1,0,\lambda) \\ 
dX ( \partial /\partial v)&= E_{2}=\a' (v)\partial /\partial y +  \partial /\partial t=(0,\a'(v),1)
 \end{aligned}\]
The direct unit normal vector  is given by $N=V/Z$ with $V= E_{1} \wedge E_{2}$,
 $$V=-\lambda \a' (v)\a(v)^{2}\partial /\partial x - \a (v)^{2} \partial /\partial y + \a' (v) \partial /\partial t =(-\lambda \a' (v) \a (v)^{2},- \a (v)^{2}, \a' (v))$$
 $$Z^{2}=|V|^{2}=(\a' (v)) ^{2} (1+ \lambda ^{2} \a (v)^2)+ \a (v)^2.$$
We compute the mean curvature by the divergence formula
$$-2H={\rm div}(N)={\rm div}\left( \frac{V}{Z} \right)=\frac{1}{Z^3} (Z^2 {\rm div} (V) - \frac12 V.(Z^2)). $$
We compute the first term

\[\begin{aligned}
 {\rm div} (V)&  =-\frac{\partial}{\partial x}(\lambda \a ^2 (v) \a ' (v)) -\lambda \a ^2 (v) \a ' (v) {\rm div} \left( \frac{\partial}{\partial x} \right)  \\
 &\;\;- \frac{\partial}{\partial y} (\alpha^2 (v))- \alpha^2 (v) {\rm div} \left( \frac{\partial}{\partial y} \right) +
 \frac{\partial}{\partial t} (\alpha ' (v)) + \alpha' (v)  {\rm div} \left( \frac{\partial}{\partial t} \right).
\end{aligned}\]
Using ${\rm div} \left( \frac{\partial}{\partial x} \right)={\rm div} \left( \frac{\partial}{\partial t} \right)=0$
and ${\rm div} \left( \frac{\partial}{\partial y} \right)= -\frac{2}{y}$ with $\alpha (v)=y$ and $v=t-\lambda x$, a direct
computation gives

\[\begin{aligned}
{\rm div} (V) & =\lambda^2 \a ^2 \a''-2 \a - \a^2 \left(-\frac{2}{\a} \right) + \a '' \\
&=(1 + \lambda^2 \a^2 ) \a''
\end{aligned}\]
For the second term

\[\begin{aligned}
\frac{1}{2} \frac{\partial}{\partial x} (Z^2)  & = -\lambda \a' \a''(1 + \lambda^2 \a^2) \\
\frac{1}{2} \frac{\partial}{\partial y} (Z^2)  & = \a (1+ \lambda^2 \a'^2)\\
\frac{1}{2} \frac{\partial}{\partial t} (Z^2)  & = \a' \a'' (1 + \lambda^2 \a^2)\\
\end{aligned}\]
Hence $\frac12 V. (Z^2)=(1 + \lambda^2 \a^2)\lambda^2\a^2\a'^2\a'' - \a^3(1+\lambda^2\a'^2)+\a'^2\a'' (1+\lambda^2\a^2).$ Finally we obtain

\[\begin{aligned}
{\rm div} (N) &={\rm div} (V/Z)\\
 & =\frac{1}{Z^3} [(1+\lambda^2 \a^2)\a''(\a^2+ \a'^2 + \lambda^2 \a^2 \a'^2)\\
 & \hspace{ 0.3cm} - (1 + \lambda^2 \a^2)\lambda^2\a^2\a'^2\a''+ \a^3(1+\lambda^2\a'^2)+ \a'^2\a''(1+\lambda^2 \a^2)] \\
&=\frac{\a^2}{Z^3} [ \a'' (1+ \lambda^2 \a^2) + \a (1+ \lambda^2 \a'^2)] = -2H
\end{aligned}\]
\end{proof}
 
We study the geometry of surfaces $X:(u,v) \to (u, \alpha (v), v+ \lambda u)$ which are minimal.
We notice they are ruled surfaces foliated by curves $v \to (0, \a (v), v)$ where $\a \in \calC ^{2} (I)$, $\a >0$, 
$\lambda \geq 0$.

\begin{figure}
\input{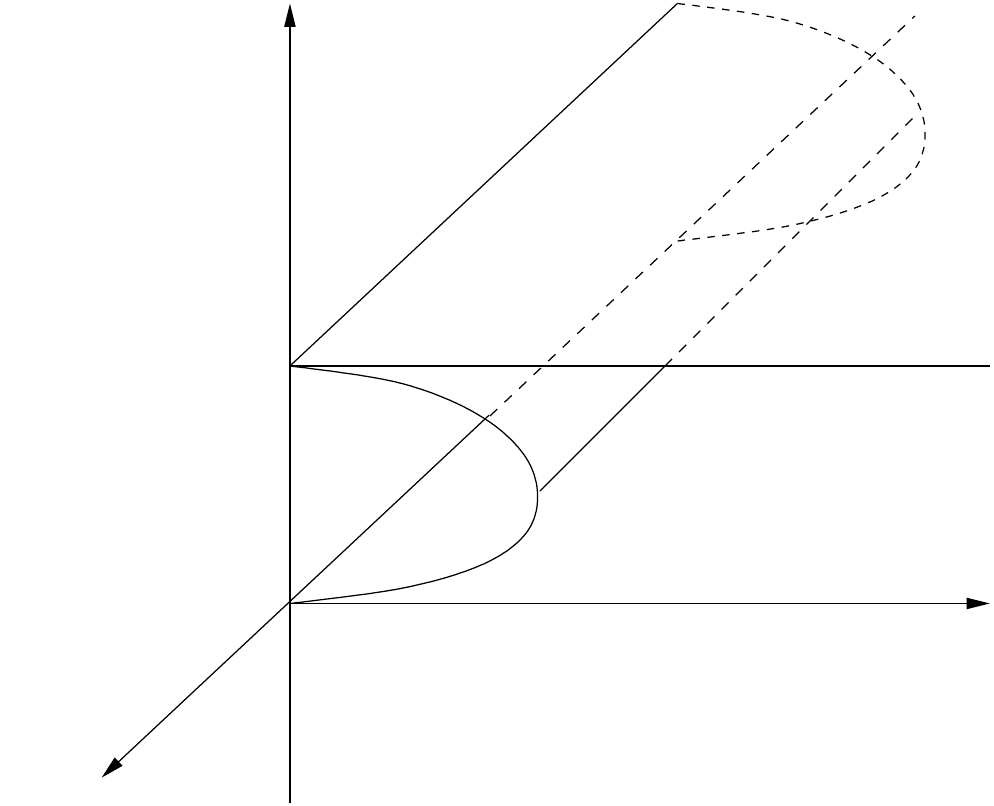_t}
\caption{$S^0$, Surfaces foliated by horizontal horocycles}
\label{fig8}
\end{figure}

The first case solves the equation when $\lambda=0$. The solution $\a (v) = T \sin v$ gives
the family of minimal surfaces up to vertical translation
$$S^{0}_T=\{ (u, T \sin v, v) \in \R^3 ; u \in \R, v \in [0,\pi]\}$$
This surface is foliated by horizontal horocycles $u \to (u,\a (v),v)$ and is described in 
Hauswirth \cite{Hau}, then by Toubiana and
Sa Earp \cite{SaEarp-Tou}, Daniel \cite{daniel} and Mazet, Rodriguez, Rosenberg \cite{M-R-R}(see figure \ref{fig8}).
By the nature of the curve $v \to (0, T \sin v, v)$, the surfaces $S^0_T$ foliate the slab  $S=\{ (x,y,t) \in \R^3;   0< y, 0 \leq t \leq \pi\}$.

\begin{figure}
\resizebox{5cm}{!}{\input{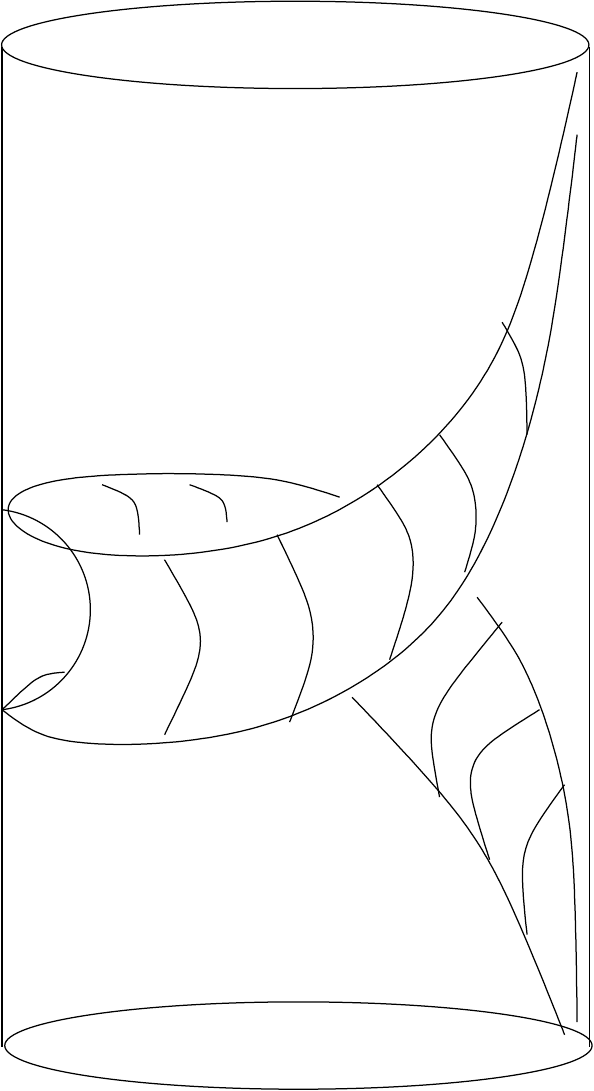_t}}\hfill
\input{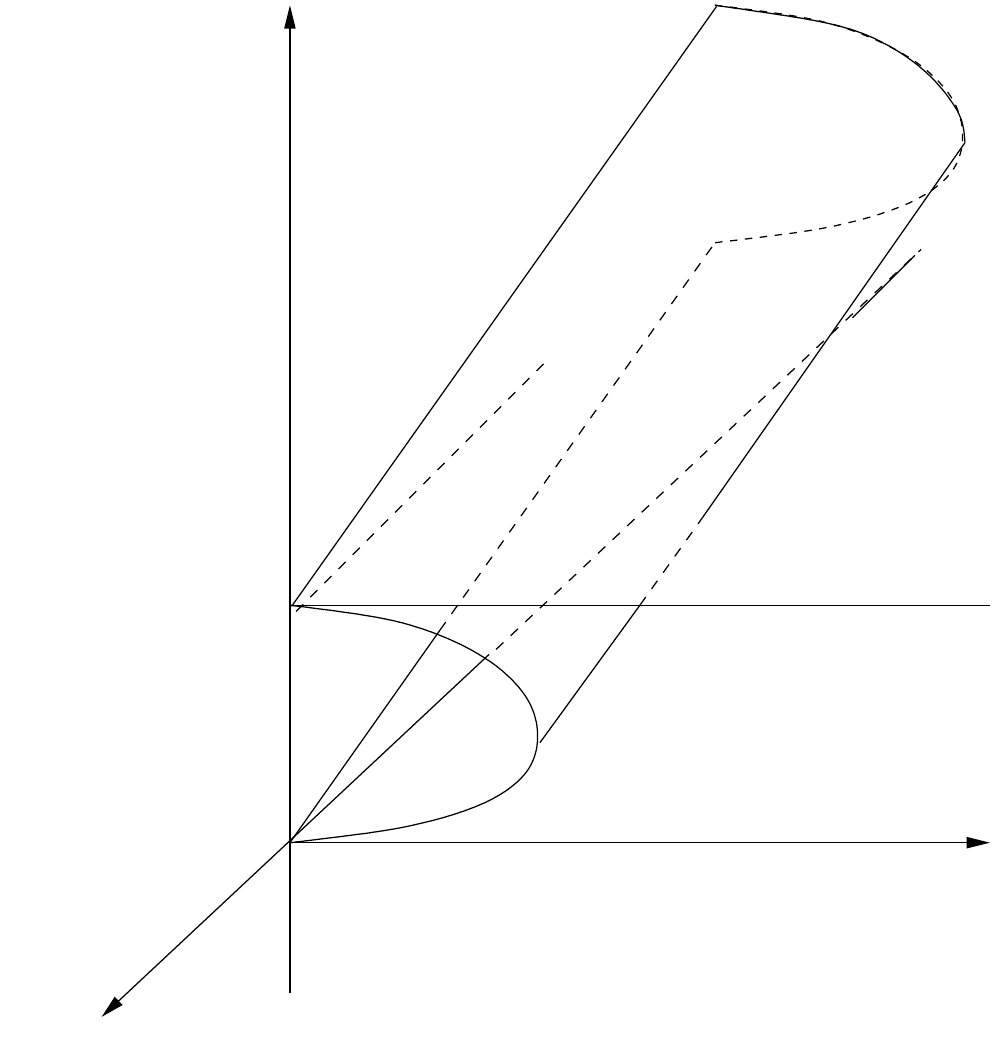_t}
\caption{Surfaces $S^\lambda_T$ in the disk model and half-space model of $\HY \times \R$}
\label{fig9}
\end{figure}

The general case $\lambda \neq 0$ depends on a function $\a$ , a solution of the equation
$$\a'' (1+\lambda^2\a^2)+\a(1+ \lambda^2 (\a')^2)=0.$$
This equation has a first integral $(1+ \lambda^2 \a '^2)(1+\lambda^2 \a^2)=T$ for some fixed constant $T >1$. Since
$\a'' <0$ the curves $v \to (0, \a (v), v)$ are convex. For fixed $T>1$, the function $\a_T (v)$ has its maximum value
at  $\sup \a_T (v)=\lambda^{-1}\sqrt{T-1}$. The function $\a_T$ is positive on a set $[0,v_0 (T)]$.
The solution $\a_T(v)$ with initial data $\a_T(0)=0$ and
$\a' (0)=\lambda ^{-1}\sqrt{T-1}$ defines a one-parameter family of 
minimal surfaces 
$$S^{\lambda}_T=\{ (u, \a_T (v) , v+ \lambda u) \in \R^3 ; u \in \R, v \in [0,v_0 (T)]\}.$$
For large values of $T$ and fixed constant $M >0$, we look for the set of values where $0 \leq \a_T(v) \leq M$.
On this set we remark that 
$$\a'^2=\lambda^{-2}\left( \frac{T}{1+ \lambda^2 \a^2} -1 \right) \geq \lambda^{-2}\left( \frac{T}{1+ \lambda^2 M^2} -1 \right) $$
which implies that $\a' _T (v) \to \infty$ when $T \to \infty$. The part of the curve $(0, \a_T (v), v)$ contained in $ 0<y \leq M$ converges to the half geodesic $\{ (0,y,0)\in \R^3 ; 0 < y \leq M\}$. 

We summarize this discussion in the figure \ref{fig9}  and we will use
the following lemma:

\begin{lemma} 
\label{barrier}
a) The family of surfaces $S^0_T$, foliates the slab $\HY^{2} \times [0,\pi]$ and when  $T$ goes to infinity the surfaces $S^0_T$ converge on compact sets to the horizontal section $\HY^2 \times \{ 0\}$.

b) The one-parameter family of surfaces $S^\lambda_T$ converges on compact sets
to $\{(x,y,t) \in \R^3; y >0 \hbox{ and } t= \lambda x\}$.
\end{lemma}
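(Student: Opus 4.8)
The plan is to read off both statements from the explicit description of the profile curves $v \mapsto (0,\a(v),v)$ that was developed in the paragraphs preceding the lemma, together with the elementary ODE analysis already carried out there.

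For part (a): when $\lambda=0$ the minimal surface equation of Lemma \ref{meancurvature} reduces to $\a''+\a=0$, whose solution with $\a(0)=0$ is $\a(v)=T\sin v$, positive on $(0,\pi)$, so $S^0_T=\{(u,T\sin v,v):u\in\R,\ v\in[0,\pi]\}$. First I would observe that for each $(x,y,t)$ with $y>0$ and $0<t<\pi$ there is exactly one $T>0$, namely $T=y/\sin t$, with $(x,y,t)\in S^0_T$; hence the $S^0_T$ give a foliation of the open slab $\HY^2\times(0,\pi)$, and their closures pick up $\HY^2\times\{0,\pi\}$ in the limit. For the convergence claim, fix a compact $K\subset\HY^2\times\R$; then for $(x,y,t)\in S^0_T\cap K$ we have $y=T\sin t$, and since $y$ is bounded on $K$ while $T\to\infty$, necessarily $\sin t\to 0$; because $t$ stays bounded (it lies in $[0,\pi]$) this forces $t\to 0$ uniformly on $K$. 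Thus $S^0_T\cap K$ converges to $\big(\HY^2\times\{0\}\big)\cap K$; one upgrades this to $C^\infty$ convergence on compacta by noting the surfaces are minimal graphs $t=t(x,y)$ over their projections and applying standard elliptic estimates (or simply by differentiating the explicit parametrisation).

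For part (b): the profile curve solves $\a''(1+\lambda^2\a^2)+\a(1+\lambda^2\a'^2)=0$ with first integral $(1+\lambda^2\a'^2)(1+\lambda^2\a^2)=T$. The key computation — already displayed in the text — is that on the region $0\le\a_T(v)\le M$ one has
$$\a_T'(v)^2=\lambda^{-2}\Big(\tfrac{T}{1+\lambda^2\a_T^2}-1\Big)\ \ge\ \lambda^{-2}\Big(\tfrac{T}{1+\lambda^2 M^2}-1\Big)\ \xrightarrow[T\to\infty]{}\ \infty.$$
I would use this to show that the portion of the curve $v\mapsto(0,\a_T(v),v)$ lying in $\{0<y\le M\}$ is swept out over a $v$-interval $[0,\e(T)]$ with $\e(T)\to 0$ (the slope $\a_T'$ blows up, so $v$ advances only by $O(1/\a_T')$ while $\a_T$ increases from $0$ to $M$); hence this arc converges uniformly to the vertical half-geodesic $\{(0,y,0):0<y\le M\}$, and its tangent directions converge to $\partial/\partial y$. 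Plugging back into the parametrisation $X(u,v)=(u,\a_T(v),v+\lambda u)$: on a fixed compact set the $v$-coordinate tends to $0$, so the third coordinate tends to $\lambda u=\lambda x$, and the surface $S^\lambda_T$ converges on compacta to $\{(x,y,t):y>0,\ t=\lambda x\}$. Again, since the $S^\lambda_T$ are minimal and converge in the $C^0$ (indeed Hausdorff) sense to a smooth minimal surface, curvature estimates promote this to $C^\infty_{\mathrm{loc}}$ convergence; alternatively one checks directly from the bound on $\a_T'$ that all derivatives of the defining function converge.

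The only real subtlety — the main obstacle — is making the convergence in part (b) uniform near the boundary $y\to 0$: the estimate $\a_T'^2\ge\lambda^{-2}(T/(1+\lambda^2M^2)-1)$ degenerates if one lets $M\to\infty$, so one must fix $M$ first, prove convergence on $\{0<y\le M\}$ for each fixed $M$, and then exhaust $\{y>0\}$ by letting $M\to\infty$ afterwards; this order of quantifiers is exactly what "converges on compact sets" permits, so no difficulty remains, but it should be stated carefully. Everything else is bookkeeping with the explicit solutions.
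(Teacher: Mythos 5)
Your argument is correct and essentially coincides with the paper's own treatment: there the lemma is stated as a summary of exactly this explicit analysis, namely the solution $\alpha(v)=T\sin v$ and the resulting foliation $y=T\sin t$ of the slab for part (a), and for part (b) the first integral $(1+\lambda^2\alpha'^2)(1+\lambda^2\alpha^2)=T$ together with the lower bound on $\alpha_T'^2$ on $\{0<y\leq M\}$ forcing the profile curve to converge to the vertical half-geodesic. The only phrase to adjust is the step ``$\sin t\to 0$ and $t\in[0,\pi]$ forces $t\to 0$'': a compact set may also meet the sheet with $v$ near $\pi$, so one gets convergence to the two horizontal slices $t=0$ and $t=\pi$ locally, of which the $t=0$ part is the one stated in the lemma and used in the trapping argument; this imprecision is shared with the paper's statement and does not affect anything.
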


\section{Trapping theorem for minimal ends}
We consider a minimal surface $\Sigma$ of finite topology, hence each end of $\Sigma$ is an annular end.
Since $\Sigma$ is properly immersed, each end $A_0$ of $\Sigma$ is contained in some end $\calM$ of $M \times \SY^1$.

\begin{lemma}
\label{proper}
There is $y_0 \geq 1$ and a sub-end $A$ of $A_0$ such that $\partial A \subset \bbT (y_0)$,
$A$ is transverse to $\bbT (y_0)$ and $A \subset \cup_{y \geq y_0} \bbT (y)$.
\end{lemma}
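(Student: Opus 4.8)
The plan is to read the statement off from properness of the immersion, the essential input being that the truncated cusps $\{1\le y\le Y\}$ of $\calM$ are compact (equivalently, that the tori $\bbT(y)$ are compact), and then to cut $A_0$ along a regular level of the coordinate $y$ chosen high enough.

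\emph{Step 1: $y\to+\infty$ along the end.} Denote still by $y$ the restriction to $A_0$ of the cusp coordinate on the end $\calM\supseteq A_0$ (that is, $y$ composed with the immersion). For each $Y\ge 1$ the region $\{\,1\le y\le Y\,\}\subseteq\calM$ is compact: in quotient coordinates it is $(\R/\tau\Z)\times[1,Y]\times(\R/h\Z)$, and in the case of $N$ it is $T^{2}\times[1,Y]$. Since the immersion is proper and $A_0$ is closed in $\Sigma$, the set $\{y\le Y\}\cap A_0$ is compact; modelling the annular end as $A_0\cong S^{1}\times[0,\infty)$ with the end at $+\infty$, this forces $\{y\le Y\}\cap A_0\subseteq S^{1}\times[0,L_Y]$ for some $L_Y$, hence $y>Y$ on $S^{1}\times(L_Y,\infty)$. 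Letting $Y\to\infty$ gives $y\to+\infty$ along the end.

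\emph{Step 2: choice of $y_0$ and of $A$, and verification.} By Sard's theorem the regular values of the smooth function $y\colon A_0\to\R$ are dense; using Step 1 I fix a regular value $y_0\ge 1$ so large that $\{y\le y_0\}\cap A_0$ is compact, disjoint from $\partial A_0$, and contained in some $S^{1}\times[0,s_0)$, so that $y>y_0$ on $S^{1}\times[s_0,\infty)$. Then $\{y=y_0\}\cap A_0$ is a compact $1$-manifold, i.e.\ finitely many circles, so $\{y>y_0\}\cap A_0$ has finitely many components, of which exactly one, say $A^{\circ}$, is not relatively compact (because $A_0$ has a single end). Set $A:=\overline{A^{\circ}}$. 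By construction $A$ is a sub-end of $A_0$; $A\subseteq\{y\ge y_0\}=\bigcup_{y\ge y_0}\bbT(y)$; and $\partial A\subseteq\{y=y_0\}\cap\Sigma\subseteq\bbT(y_0)$, with $\partial A\neq\emptyset$ since $A_0$ is connected and meets both $\{y<y_0\}$ and $\{y>y_0\}$. Transversality of $A$ to $\bbT(y_0)$ along $\partial A$ is exactly the fact that $y_0$ is a regular value: at each $p\in\partial A$, $dy_p$ is nonzero on $T_p\Sigma$, while $T_p\bbT(y_0)=\ker dy_p$, so $T_p\Sigma+T_p\bbT(y_0)$ is the whole tangent space. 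Finally, to get that $A$ is genuinely an annulus one notes that $A$ is a connected planar surface (a subsurface of the annulus $A_0$) with a single end, so $\chi(A)=1-b$ where $b\ge 1$ is the number of boundary circles; combined with $0=\chi(A_0)=\chi(A)+\chi(\overline{A_0\setminus A})$ and with the fact that for $y_0$ above the (bounded) critical levels of $y$ on $A_0$ the level set $\{y=y_0\}\cap A_0$ is a single essential circle, one concludes $b=1$ and $A\cong S^{1}\times[0,\infty)$.

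\emph{Expected main difficulty.} The argument is essentially soft, and the only genuinely substantive point is Step 1: turning properness of the immersion into the statement that $y\to+\infty$ along the end — this is precisely where one must use that the horospherical tori $\bbT(y)$ are compact, hence that the truncated cusp pieces of $\calM$ (and of $\calM(-1)$) are compact. The remaining care needed is the bookkeeping at the end of Step 2 ensuring that the component we retain is an honest annulus rather than a planar surface of negative Euler characteristic; for the way the lemma is used afterwards it would in fact suffice to know only that $A$ is a neighbourhood of the end with $\partial A\subset\bbT(y_0)$ transverse and $A\subset\bigcup_{y\ge y_0}\bbT(y)$, which the construction already gives directly.
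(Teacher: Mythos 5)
Your soft argument (properness of the immersion plus compactness of the truncated cusp $\{1\le y\le Y\}$, Sard to pick a regular level, then take the closure of the unbounded component of $\{y>y_0\}\cap A_0$) is fine as far as it goes, and it reproduces the easy part of the paper's proof. But it only yields a neighbourhood of the end whose boundary is a \emph{finite union} of circles in $\bbT(y_0)$; the real content of the lemma — and what the paper's proof is actually about — is that this sub-end can be taken to be an annulus whose boundary is a \emph{single} closed curve on the torus. Your attempt to get this is where the gap lies: you invoke ``the (bounded) critical levels of $y$ on $A_0$'', but nothing proved so far bounds the critical values of $y|_{A_0}$. A priori the end can be tangent to the horotori $\bbT(y)$ at arbitrarily large $y$ (ruling out such behaviour is essentially the content of the later sections of the paper), and then for \emph{every} regular value $y_0$ the unbounded component of $\{y>y_0\}\cap A_0$ may carry extra null-homotopic boundary circles, coming from ``dimples'' of the surface that dip below the level $y_0$. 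Indeed, for a merely proper (non-minimal) annular end with dimples at all scales, no annular sub-end with boundary on a single horotorus and contained above it exists at all — so some geometric input is unavoidable. The paper supplies it with the maximum principle against the constant-mean-curvature tori: a compact piece of the minimal end with boundary on $\bbT(y_0)$ cannot descend below $y_0$, since at its lowest point it would touch some $\bbT(y_1)$, $y_1<y_0$, from the mean-convex side. This is used twice: first to show that at most one of the curves of $A_0\cap\bbT(y_0)$ is essential in $A_0$ (the argument with the domains $F$, $F_1$, $F_2$), and second to show that each null-homotopic intersection curve bounds a disk of $A_0$ lying entirely in $\{y\ge y_0\}$, so that the component containing the essential curve has no other curve in its boundary. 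None of this is recoverable by topology alone.

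Your closing remark — that for later use it would suffice to know $A$ is some neighbourhood of the end with transverse boundary in $\bbT(y_0)$ — is also not accurate: immediately after the lemma the paper takes $\partial A$ to be \emph{one} curve of the torus, reads off the type $(p,q)$ from its homotopy class, and lifts $A$ to a half-plane $E$ whose connected boundary curve is invariant under the single isometry $\psi^p\circ T(h)^q$; the Trapping Theorem and Lemma \ref{annulus} are formulated and proved in exactly that setting. So the connectedness of $\partial A$ is needed, and your proof as written does not establish it. (Minor: your phrase ``$\{y\le y_0\}\cap A_0$ \dots disjoint from $\partial A_0$'' should read that the level $\{y=y_0\}$ avoids $\partial A_0$, since for large $y_0$ one has $\partial A_0\subset\{y<y_0\}$.)
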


\begin{figure}
\input{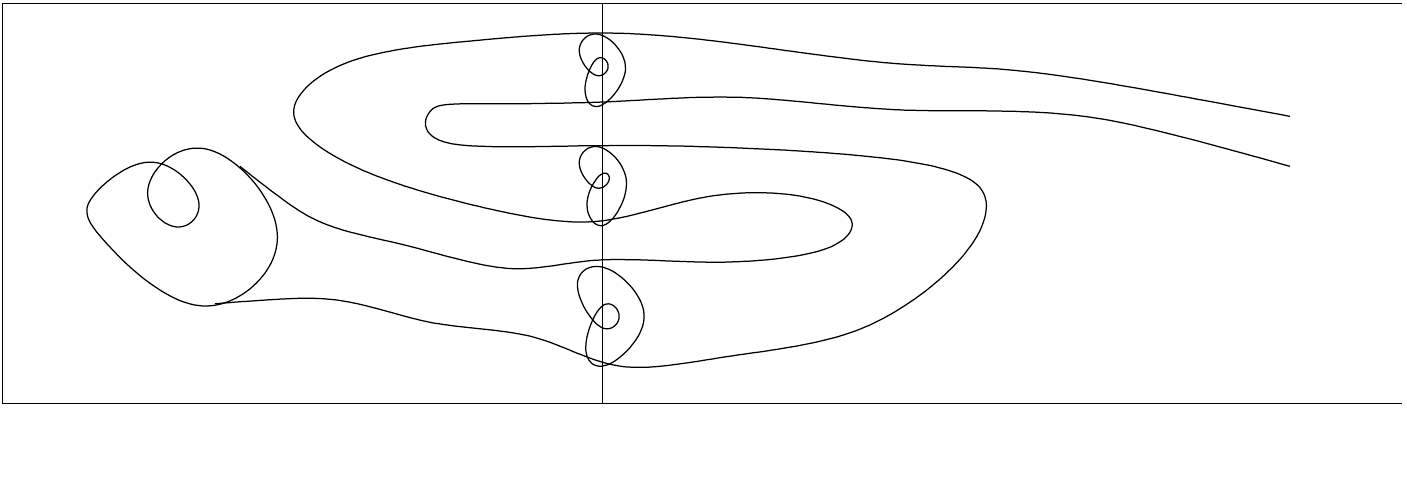_t}
\caption{An annular end in $\calM$}
\label{fig10}
\end{figure}

\begin{proof}
Since $\Sigma$ is properly immersed each end of $\Sigma$ has a subend $A_0$ contained in some
$\calM= \cup_{y \geq 1} \bbT (y)$. $A_0$ is transverse to almost every $ \bbT (y)$ so let
$y_0 >1$ be such that $\partial A_0 \subset  \cup_{1\leq y < y_0} \bbT (y)$, and $A_0$ is transverse
to $ \bbT (y_0)$. Then $A_0 \cap \bbT (y_0)= C_1 \cup ...\cup C_k$, each $C_j$ an immersed Jordan curve in $\bbT (y_0)$.

$A_0$ is proper so $A_0 \cap \bbT (y) \neq \emptyset$ for large $y$. Hence at least one of the $C_j$ is not null homotopic
in $A_0$. Observe that there is at most one such $C_j$. For if $C_i$ and $C_j$ are not trivial then they bound a compact domain
$F$ in $A_0$ disjoint from $\partial A_0$. $F$ cannot be contained in $\cup_{1\leq y \leq  y_0} \bbT (y)$ since then $F$ would
touch some $\bbT(y_1), y_1 < y_0$, on the mean convex side of $\bbT(y_1)$, a contradiction. So $F \subset \cup_{y \geq y_0} \bbT (y)$.
But then, $\partial A_0$ and $C_i$ or $C_j$ ($C_i$ say) would bound a compact $F_1$ on $A_0$, $F_1 \cap C_j = \emptyset$.
$A_0 - F_1$ is an annular sub-end of $A_0$ with boundary $C_i$ contained in $\partial \bbT (y_0)$. Since $A_0 - F_1$ intersects
$\bbT (y_0)$ also at $C_j$, there is a compact domain $F_2$ of $A_0 -F_1$ contained in $\cup_{1\leq y \leq y_0} \bbT (y)$
with $\partial F_2 \subset \bbT (y_0)$; a contradiction; see figure \ref{fig10}.

Now it is clear that if each $C_\ell$, $\ell \neq i$ bounds a disk $D$ on $A_0$ that is contained in $\cup_{y \geq y_0} \bbT (y)$.
It follows that the connected component of $A$ in  $\cup_{y \geq y_0} \bbT (y)$ that has $C_i$ in its boundary has no other
$C_\ell$, $l \neq i$, in its boundary. This proves the lemma.
\end{proof}
 
By a change of coordinates on $\calM$, we can assume that the end $A$ is in $\cup_{y \geq 1} \bbT (y)$ and $\partial A \subset \bbT (1)$ . Let $E$ be a connected
component of the lift of $A$ to $\HY \times \R$. The boundary $\partial E \subset P:=\{(x,y,t) \in \R^3 ; y=1 \}$
and $E$ is transverse to $P$. There is $(p,q)$ such that the curve $\partial E$ is invariant by the isometry of $\HY^2 \times \R$

$$\psi^p \circ T(h)^q : (x,y,t) \to (x+p\tau , y, t+qh).$$ 
We say that $A$ and $E$ are of type $(p,q)$. The curve $\partial A$ is a curve of the torus $\bbT (1)$.   We prove in the following lemma that $(p,q) \neq (0,0)$

\begin{lemma}
\label{annulus}
The end $E$ is topologically a half-plane and $\partial E$ is a non compact curve in $P$.
\end{lemma}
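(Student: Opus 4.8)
The plan is to use the properness of the end $A$ together with the structure of the cusp region $\calM$ to show that $E$ cannot be a compact annulus or a cylinder, and therefore must be a half-plane with non-compact boundary. First I would recall that $A$ is an annular end lying in $\cup_{y\geq 1}\bbT(y)$ with $\partial A \subset \bbT(1)$, and that $A$ is proper in $\calM$, so $A\cap \bbT(y)\neq\emptyset$ for all $y\geq 1$ (indeed $A$ must leave every compact set of $\calM$, which exhausts $\calM$ by the slabs $\{1\leq y\leq y_1\}$). This says $A$ is genuinely non-compact and "escapes to the cusp". Now $\partial A$ is a single Jordan curve in the torus $\bbT(1)$, determined up to homotopy by a class $(p,q)\in\Z^2$, and the lift $E$ is the connected component whose boundary $\partial E$ is the component of the preimage of $\partial A$ invariant under $\psi^p\circ T(h)^q$.

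The first thing to pin down is that $(p,q)\neq(0,0)$. If $(p,q)=(0,0)$, then $\partial A$ would be a null-homotopic Jordan curve in $\bbT(1)$, hence bound a disk $\Delta\subset\bbT(1)$. Then $\partial A = \partial A$ bounds the annular end $A$ on one side and could be capped off with $\Delta$; but more directly, since $\partial A$ is trivial in $\bbT(1)$, its lift $\partial E$ is a compact Jordan curve in the plane $P=\{y=1\}$. I would then invoke the maximum principle against the foliation by the tori $\bbT(y)$ — equivalently against the horospheres/planes $\{y = c\}$, which are themselves (the quotients of) constant mean curvature leaves with mean convex side pointing toward increasing $y$: a minimal surface with compact boundary in $\{y=1\}$ cannot reach arbitrarily large $y$, since at a highest point $y=y_{\max}$ it would be tangent to $\{y=y_{\max}\}$ from the mean-convex side, contradicting the comparison principle (the surface $\{y=y_{\max}\}$ has nonzero mean curvature vector pointing into $\{y<y_{\max}\}$ in $\calM$, resp. $\calM(-1)$). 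Hence $A$ would be confined to a slab $\{1\leq y\leq y_{\max}\}$, contradicting properness. So $(p,q)\neq(0,0)$, and therefore $\partial A$ is an essential curve in $\bbT(1)$; its lift $\partial E$ is invariant under the infinite-order isometry $\psi^p\circ T(h)^q$ and is consequently a \emph{non-compact} properly embedded arc in $P$.

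With $\partial E$ non-compact and invariant under an infinite cyclic group $G=\langle\psi^p\circ T(h)^q\rangle$ acting freely and properly discontinuously, I would conclude the topology of $E$ as follows: $A$ is an annulus, so $A$ is the quotient of $E$ by a deck group; since $\partial A$ is a single essential circle and $\partial E$ is a single line covering it, the deck group of $E\to A$ is exactly $G\cong\Z$, and $E/G\cong A\cong \SY^1\times[0,\infty)$. An orientable surface with one boundary circle whose $\Z$-cover has a single boundary line is planar, and its universal-type cover $E$ (connected, simply-ended over the annulus) is a half-plane $\{(s,r): s\in\R,\ r\geq 0\}$ with $\partial E=\{r=0\}$ a single non-compact curve in $P$. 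The one point that needs a word of care is ruling out that $E$ is itself an annulus (i.e.\ that $A$ is covered trivially), but that is precisely excluded by $(p,q)\neq(0,0)$: a nontrivial translation preserves $E$, so $E$ carries a free $\Z$-action and cannot be compact or an annulus with compact boundary.

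The main obstacle is the maximum-principle step showing $(p,q)\neq(0,0)$, i.e.\ that a minimal annular end cannot stay in a bounded slab while being proper — one must use that the foliating tori $\bbT(y)$ have their mean curvature vector pointing consistently toward the thin part, so they serve as barriers and a minimal surface touching one from the mean-convex side would have to be that leaf. Everything after that is soft topology about $\Z$-covers of the annulus.
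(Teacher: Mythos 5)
There is a genuine gap at the heart of your argument: the step that rules out $(p,q)=(0,0)$ does not work with the barriers you chose. The tori $\bbT(y)$ (horocylinders $\{y=c\}$ in $\HY^2\times\R$, horospheres in $\HY^3$) have mean curvature vector pointing toward \emph{increasing} $y$, i.e.\ into the cusp --- this is exactly the convention the paper uses in Lemma \ref{proper}, where a compact piece lying in $\{y\geq y_1\}$ and touching $\bbT(y_1)$ touches it on the mean-convex side. Your proposal first states this correctly, but then asserts the opposite ("mean curvature vector pointing into $\{y<y_{\max}\}$") when you need it. At a putative highest point of $E$ the surface lies in $\{y\leq y_{\max}\}$, i.e.\ on the \emph{non}-mean-convex side of the leaf, and such a tangency is perfectly admissible: a vertical totally geodesic plane $\gamma\times\R$, with $\gamma$ a geodesic tangent to a horocycle from outside the horodisk, is a minimal surface tangent to a horocylinder in exactly this configuration. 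Moreover the argument is circular: properness (Lemma \ref{proper}) already forces $y$ to be unbounded on $E$, so there is no highest point to apply a maximum principle at; and even if $E$ were trapped in a slab, the supremum of $y$ need not be attained on a non-compact surface. So the horosphere/horocylinder foliation simply cannot serve as the barrier family here, and the crucial assertion "a proper minimal annulus with compact boundary in $\{y=1\}$ cannot exist in the cusp" is left unproved.

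What is actually needed, and what the paper does, is a barrier family of \emph{minimal} surfaces that the end must cross, so that a first point of contact is an interior tangency between two minimal surfaces. Assuming $E$ is an annulus with compact boundary, the projection $\Pi(\partial E)$ to $\HY^2$ is compact; one chooses two disjoint geodesics $\gamma_1,\gamma_2$ separating $\Pi(\partial E)$ from the ideal point of the cusp, and over the band $\Omega$ between them solves the Dirichlet problem (Collin--Rosenberg) with boundary values $+\infty$ on $\gamma_1\cup\gamma_2$ and a constant $a$ on $\partial_\infty\Omega$. Since $E$ is proper and must enter the region over $\Omega$ to reach large $y$, while $\partial E$ stays outside $\Omega\times\R$, varying $a$ (vertical translation of these Scherk-type graphs) produces a first interior contact with $E$, contradicting the maximum principle. (In $\HY^3$ the paper's Section 9 uses totally geodesic planes and a catenoid in the same spirit.) Your soft-topology conclusion --- that once $(p,q)\neq(0,0)$ the connected lift $E$ is the universal cover of the annulus $A$, hence a half-plane with a single non-compact boundary line invariant under $\psi^p\circ T(h)^q$ --- is essentially fine, but it is the analytic exclusion of the compact-boundary case that carries the lemma, and that step needs to be replaced along the lines above.
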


\begin{figure}
\input{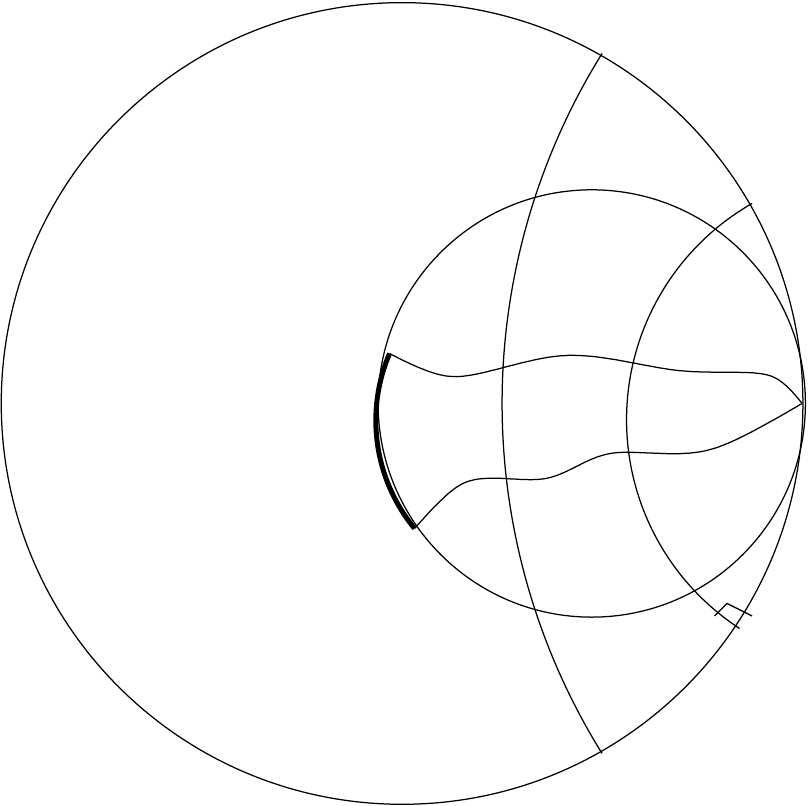_t}
\caption{An annular end in $\calM$}
\label{fig11}
\end{figure}

\begin{proof}
Assume the contrary, and let $E$ be a lifting of $A$ to $\HY \times \R$, $E$ an immersed annulus
in $\{ y \geq 1\}$, $\partial E \subset H=\{ y=1\}$. We know the coordinate $y$ is a proper
function on $E$.

Denote by $\Pi : \HY \times \R \to \HY= \HY \times \{ 0 \}$, the vertical projection. Let
$\gamma_1, \gamma_2$ be disjoint geodesics of $\HY$, disjoint from $\Pi (\partial E)$,
and that separate $\Pi (\partial E)$ from the point at infinity of $\HY$; see figure \ref{fig11}
(the set $\Pi (\partial E)$ is compact). Let $\Omega \subset \HY$ be the domain of $\HY$ bounded
by $\gamma_1 \cup \gamma_2$, so $\Pi(\partial E) \cap \Omega= \emptyset$.

For $a \in \R$, solve the Dirichlet problem on $\Omega$ to find a minimal graph over $\Omega$,
with asymptotic values $+\infty$ on $\gamma_1 \cup \gamma_2$, and $a$ on $\partial_\infty (\Omega) $
(see Collin-Rosenberg \cite{collinrosenberg}).

By varying $a$ we obtain a first point of contact of the graph with $E$; a contradiction.
\end{proof}
Now we prove that an end $E$ of type $(p,q)$, $(p,q) \neq (0,0)$ is trapped between two ends of type $E_{(p,q)}$:
 \begin{theorem}{\bf  (The Trapping Theorem)}
 \label{trapping}
 Let $A \subset \Sigma$ be a properly immersed end in $\calM$ with $\partial A \subset \bbT (1)$ and
 $A$ transverse to $ \bbT (1)$. If $\partial A $, is a curve of type $(p,q)$ in $\bbT (1)$, then $A$ is contained in a slab
 on $\calM$ bounded by two standard ends $A_{(p,q)}$.
\end{theorem}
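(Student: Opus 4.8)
The plan is to use the explicit minimal surfaces constructed in Section~3 — the families $S^0_T$ and $S^\lambda_T$ from Lemma~\ref{barrier} — as barriers, together with the maximum principle, to squeeze the end $A$ between two translates of a standard end $A_{(p,q)}$. Work in the lift $E \subset \HY^2\times\R$, which by Lemma~\ref{annulus} is a half-plane with noncompact boundary $\partial E\subset P=\{y=1\}$, invariant under $\psi^p\circ T(h)^q$. The first step is to reduce to the lifted picture: since $A$ is invariant under $\psi^p\circ T(h)^q$, it suffices to find two parallel euclidean half-planes $E_{(p,q)}(c_0)$ and $E_{(p,q)}(c_1)$ — both orthogonal to $P$ and of slope $qh/p\tau$ — with $\partial E$ lying in the slab between them, and then to show $E$ itself stays in that slab; quotienting by $\psi^p\circ T(h)^q$ (resp. $[\psi,T(h)]$, or $\psi$, or $T(h)$ according to the three cases) gives the statement about $A$ and $A_{(p,q)}$.

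The second step handles the boundary: because $\partial E$ is a periodic noncompact curve in the plane $P=\{y=1\}$ invariant under $(x,t)\mapsto(x+p\tau,\,t+qh)$, its image under the projection $(x,1,t)\mapsto(x,t)$ is contained between two lines of slope $qh/p\tau$ (the ``direction'' of the period vector), i.e.\ $\partial E$ lies in a slab $\{c_0\le p\tau t-qh x\le c_1\}$; equivalently, after an isometry of $\HY^2\times\R$ of the form $(x,y,t)\mapsto(x,y,t+\lambda x)$ with $\lambda$ chosen so that the family $S^\lambda_T$ is adapted, we may assume the two bounding half-planes are $\{t=\lambda x\}$-translates and $\partial E$ lies between them.

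The third and main step is the barrier argument. Take one of the bounding half-planes, say $\{p\tau t - qh x = c_1\}$, and push it off to one side, then slide it back. Concretely, by Lemma~\ref{barrier}(b) (after the linear change of coordinates that converts the ruled surfaces $S^\lambda_T$ into a foliation asymptotic to $\{t=\lambda x\}$, hence, after composing with the isometry above, to the relevant half-plane), the surfaces $S^\lambda_T$ sweep out one side of the half-plane as $T$ varies; for $T$ large they are disjoint from the compact region $\Pi(\partial E)$ bounds, so initially $S^\lambda_T$ does not meet $E$. Decrease $T$ (equivalently slide the barrier toward $E$): if $E$ were not contained in the closed slab, there would be a first value of $T$ at which $S^\lambda_T$ touches $E$ at an interior point or a boundary point. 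An interior tangency contradicts the maximum principle for minimal surfaces (both are minimal, tangent, one locally on one side of the other); a boundary touching is excluded because $\partial E\subset P=\{y=1\}$ while the barrier $S^\lambda_T$, being asymptotic to the half-plane and convex in the $v$-parameter, meets $\{y=1\}$ only in a controlled way that we have arranged to avoid $\partial E$. Repeating from the other side with $S^\lambda_{T'}$ translated by the opposite amount confines $E$ from below as well. For the degenerate direction $(p,0)$ one uses the family $S^0_T$ of Lemma~\ref{barrier}(a) instead, foliating the horizontal slab $\HY^2\times[0,\pi]$ and degenerating to $\HY^2\times\{0\}$, which plays the same role for confining between two horizontal cusps $\calC\times\{t_i\}$.

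The hard part is the boundary analysis in the maximum-principle step: one must check that when the barrier $S^\lambda_T$ is slid toward $E$, the first contact is genuinely an interior tangency and cannot be hidden at $\partial E\subset\{y=1\}$ or escape to $y=+\infty$. The escape to infinity is ruled out because $y$ is a proper function on $E$ (as used in Lemma~\ref{annulus}) and the barriers $S^\lambda_T$ are complete and foliate their region; the boundary contact is ruled out by first choosing the bounding half-planes with a small safety margin so that $\partial E$ stays strictly in the open slab, and then noting that the barrier family, as it degenerates, separates from $\{y=1\}$ except on the half-plane itself. Once both one-sided confinements are in place, $E$ lies in the slab bounded by the two half-planes; passing to the quotient yields that $A$ lies in the slab on $\calM$ bounded by the two standard ends $A_{(p,q)}$, which is the assertion.
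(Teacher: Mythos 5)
Your overall strategy is the paper's: trap $\partial E$ in a slab using the periodicity of $\partial E$, then use the Section~3 families $S^0_T$, $S^\lambda_T$ as barriers with the maximum principle. But as written, the key steps of the barrier argument have genuine gaps. First, your initial position is unjustified: you claim that for $T$ large $S^\lambda_T$ ``is disjoint from the compact region $\Pi(\partial E)$ bounds, so initially $S^\lambda_T$ does not meet $E$.'' Here $\Pi(\partial E)$ is \emph{not} compact (that was only true in the degenerate annulus case of Lemma~\ref{annulus}; in the Trapping Theorem $\partial E$ is a noncompact periodic curve), and in any case disjointness from a set containing $\partial E$ says nothing about $E$ itself, whose location is precisely what is unknown. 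The paper gets the starting position differently: it fixes $T$, notes that $y\le \lambda^{-1}\sqrt{T-1}$ on $S^\lambda_T$, and uses properness of $A$ plus invariance of $E$ under $\psi^p\circ T(h)^q$ to see that $p\tau t-qhx$ is bounded on $E\cap\{y\le \lambda^{-1}\sqrt{T-1}\}$; hence the vertical translate $S^\lambda_T(t_0)$ misses $E$ for $t_0$ large. Second, your sweep parameter is $T$ itself, but Lemma~\ref{barrier} does not assert (and it is not clear) that the surfaces $S^\lambda_T$, $\lambda\neq 0$, foliate one side of the tilted plane as $T$ varies, so ``the first value of $T$ at which $S^\lambda_T$ touches $E$'' is not a well-founded continuity argument. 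The paper instead slides the \emph{isometric vertical translates} $S^\lambda_T(t)$, $t$ decreasing to $c_{\rm Max}/(p\tau)$, with $T$ fixed, and only lets $T\to\infty$ at the very end to conclude $E\subset\{p\tau t-qhx\le c_{\rm Max}\}$.

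Third, your reason that the first contact cannot escape to infinity (properness of $y$ on $E$ and completeness of the barriers) is not sufficient: the barriers have bounded $y$, so potential contact points lie at bounded height, and nothing you invoke prevents them from drifting off in the $x$-direction at bounded height. What actually rules this out is that $E$ itself (not just $\partial E$) and each barrier translate are invariant under the same translation $\psi^p\circ T(h)^q$, so the contact analysis descends to the quotient, where the relevant pieces are compact; this use of periodicity of $E$ is the heart of the paper's ``no first contact point at infinity'' step and is missing from your argument. You also omit the vertical case $(0,q)$ entirely: there the slope $qh/p\tau$ is undefined, the families $S^\lambda_T$ do not apply, and the paper uses the totally geodesic planes $\gamma_T\times\R$ sliding toward a vertical plane, exploiting that $\partial E$ then has bounded $x$-coordinate. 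Finally, a smaller point: the map $(x,y,t)\mapsto(x,y,t+\lambda x)$ you invoke is \emph{not} an isometry of $\HY^2\times\R$ and does not preserve minimality; fortunately it is also unnecessary, since the bounding half-planes $\{p\tau t-qhx=c\}$ are already vertical translates of $\{t=\lambda x\}$, and vertical translations are isometries.
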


\begin{proof}  We use the model of $\calM=(H \times \R) / [\psi, T(h)]$ and a connected component $E$ of a lifting of $A$ in $\HY^2 \times \R$. We prove that  $E$ is contained in a slab bounded by two
half-planes $E_{(p,q)}$ in $H \times \R$ where $H=\{ (x,y) \in \R^2; y \geq 1\}$. 
\vskip 0.3cm
\begin{figure}
\input{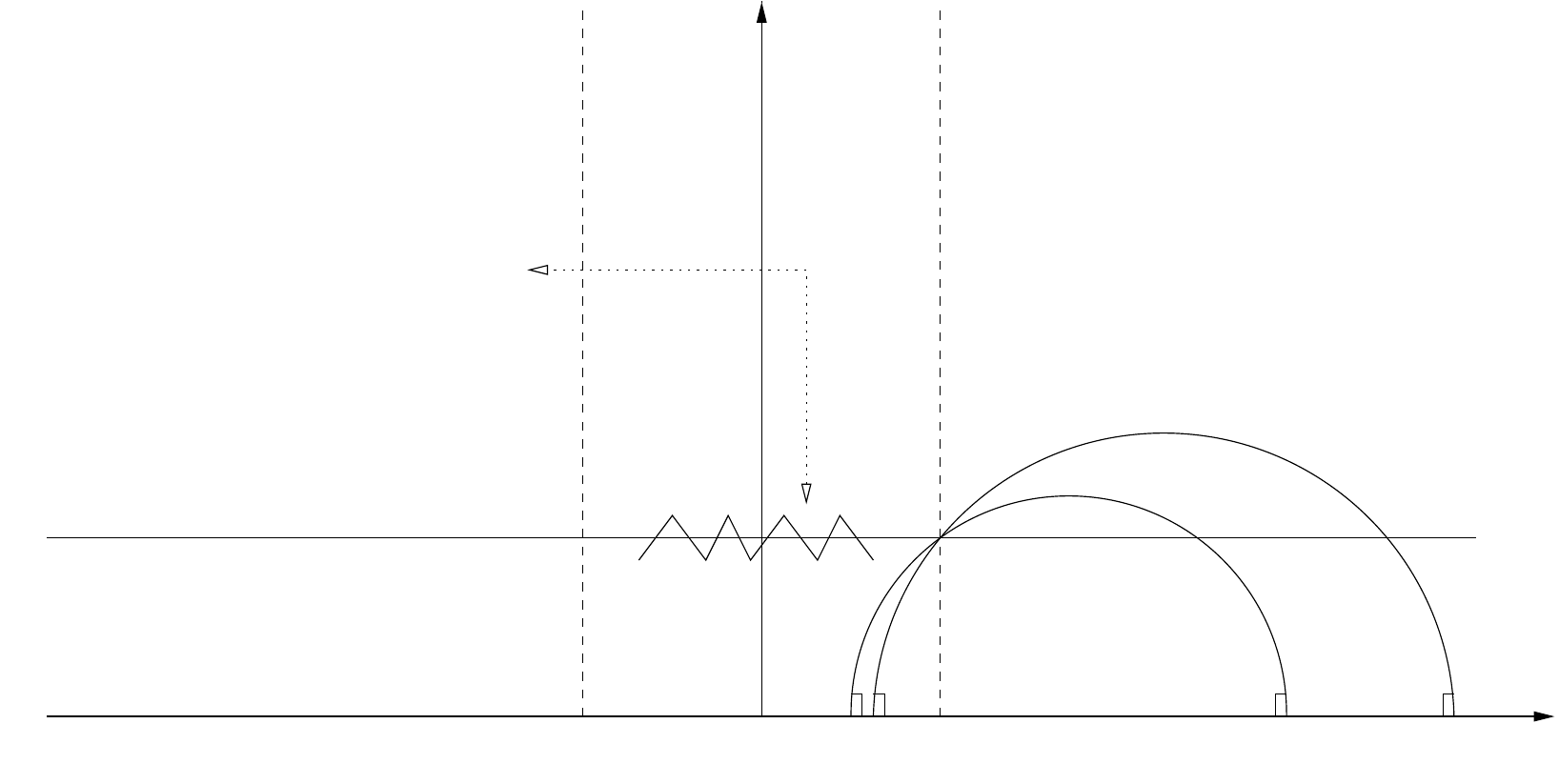_t}
\caption{An annular end in $\calM$}
\label{fig12}
\end{figure}
{\it Case $(p,q)=(0,q)$}.  First we begin with the case where the curve $\partial A$ is of type $(0,q)$. This means that the boundary $\partial E$ is a periodic curve invariant by vertical translation $T(h)^q=T(qh)$. From this invariance of $\partial E$, we know there exists $x_{Min}$ and
$x_{Max}$ such that $\partial E \subset \{ (x,1,t); x_{Min}\leq x \leq x_{Max} \}$.

Let $Q=\{ (x,y) \in \R^2 ; \; x \geq x_{\rm Max}   \hbox{ and }  y > 0 \}$. Foliate $Q$ by the geodesics $\gamma_T$ whose end-points at
infinity are $(x_{Max},0)$ and $(T,0)$; $T > x_{Max}$. For $| T - x_{Max}| < 2$, $\gamma_T \cap  \{ y \geq 1\} = \emptyset$. Define
$S_T= \gamma_T \times \R$; so $S_T \cap E= \emptyset$ for $|T -x_{Max}| < 2$.

Now let $T$ increase to $\infty$, so $S_T$ converges to $\{ (x_{Max},y) ; y>0 \} \times \R$. Since $E$ is periodic, $S_T$ must be disjoint
from $E$ for all $T>1$; otherwise there would be a first point of contact (i.e. the two surfaces cannot have a first contact 
point at infinity), contradicting the maximum principle (see figure \ref{fig12})

The same argument using $x_{Min}$ shows $E$ is trapped between two standard ends of type $E_{(0,1)}$.
\vskip 0.3cm
{\it Case $(p,q)=(p,0)$} Now $\partial E$ is a curve invariant by $\psi^p (x,y,t)=(x +p\tau,y,t)$. Let $t_{Min}$ and $t_{Max}$
satisfy:
$$\partial E \subset \{ (x,1,t) ; t_{Min} \leq t \leq t_{Max} \}.$$
Translate the barriers of lemma \ref{barrier}, see figure \ref{fig8},
$$S^{0}_T (t):=\{ (u, T \sin v, v+t) \in \R^3 ; u \in \R, v \in [0,\pi]\} \hbox{ with } t \geq t_{\rm Max}.$$
For $|T| \leq 1$, we have $S^{0}_T (t) \cap ( H \times \R)=\emptyset$, and $\partial E$ is below height $t= t_{Max}$. By lemma \ref{barrier},
the family $S^{0}_T (t_{Max})$ converges on compact sets to the horizontal section $t=t_{Max}$. For $t > t_{Max}$, $t$ large, $T_0$ given,
we have $S^{0}_{T_0} (t) \cap E = \emptyset$.

If $S^{0}_{T_0} (t_{Max}) \cap E \neq \emptyset$, then since $E$ is periodic, there would be a first $t_1$ such that $S^{0}_{T_0} (t_1) \cap
E \neq \emptyset$, contradicting the maximum principle. Thus $E$ is below $t=t_{Max}$. The same argument with
$$\tilde S^{0}_T (t):=\{ (u, T \sin v, v-\pi+t) \in \R^3 ; u \in \R, v \in [0,\pi]\}  \hbox{ with } t \leq t_{\rm Min}$$
shows $E$ is above $t =t_{Min}$. Thus $E$ is trapped between two standard ends of type $E_{(p,0)}$.

\vskip 0.3cm
{\it Case $(p,q)\neq (0,q),(p,0)$}. Now we use the family of barriers $S^{\lambda} _T$. $\partial E$ is invariant by 
the isometry $\psi^p \circ T(h)^q : (x,y,t) \to (x+p\tau, y, t +qh)$ on $y=1$. Thus there exists  $c_{\rm Min}, c_{\rm Max}$
such that
$$\partial E \subset \{ (x,1,t)\in \R^3; c_{\rm Min}\leq p\tau t - qhx \leq c_{\rm Max} \}.$$
We use $S^{\lambda} _T$ of lemma \ref{barrier} with $\lambda = \frac{qh}{p\tau}$; see figure \ref{fig9}.
$$S^{\lambda}_T (t):=\{ (u, \alpha_T (v), v+\lambda u+t) \in \R^3 ; u \in \R, v \in [0,v_0(T)]\} \hbox{ with } t \geq c_{\rm Max}/( p\tau)$$
For $T>1$ fixed, there is $t_0 >  c_{\rm Max}/( p\tau)$ large so that $S^{\lambda}_T (t_0) \cap E = \emptyset$. Decreasing
$t$ from $t_0$ to $c_{\rm Max}/( p\tau)$ we conclude (there is no first point of contact) that $E$ is below
$S^{\lambda}_T (c_{\rm Max}/( p\tau)$ for any $T$. Let $T \to \infty$; the $S^{\lambda}_T (c_{\rm Max}/( p\tau)$ converge to
$\{ (x,y,t)\in \R^3;   p\tau t - qhx = c_{\rm Max} \}$, hence 
$$E \subset \{ (x,y,t) \in \R^3;  p\tau t - qhx \leq c_{\rm Max} \}.$$

The same argument with
$$\tilde S^{\lambda}_T (t):=\{ (u, \alpha_T (v), v+\lambda u- v_0 (T)+t) \in \R^3 ; u \in \R, v \in [0,v_0(T)]\} \hbox{ with } t \leq c_{\rm Min}/ (p\tau)$$
shows that 
$$E \subset \{ (x,y,t) \in \R^3;  p\tau t - qhx \geq c_{\rm Min} \},$$
which completes the proof of the theorem.
\end{proof}

\section{The Dragging Lemma}

\begin{lemmad} Let $g:\Sigma \to N$ be a properly immersed minimal surface in a complete
$3$-manifold $N$. Let $A$ be a compact surface (perhaps with boundary)
and $f: A \times [0,1] \to N$ a $\calC^1$-map such that $f(A \times \{ t \})=A(t)$ is a minimal
immersion for $0 \leq t \leq 1$. If $\partial(A(t)) \cap g(\Sigma) = \emptyset$ for $0\leq t \leq 1$
and $A(0) \cap g(\Sigma) \neq \emptyset $, then there is a ${\calC}^1$ path $\gamma (t)$ in $ \Sigma$, such that $g \circ \gamma(t) \in A(t) \cap g( \Sigma) $ for $0 \leq t \leq 1$. Moreover we can prescribe any initial value $g \circ \gamma (0) \in A(0) \cap g(\Sigma)$.
\end{lemmad}

\begin{remark}
To obtain a $\gamma (t)$ satisfying the Dragging lemma that is continuous (not necessarily $\calC ^1$)
it suffices to read the following proof up to (and including) Claim 1.
\end{remark}

\begin{proof}
When there is no chance of confusion we will identify in the following $\Sigma$ and its image $g(\Sigma)$,
$\gamma \subset \Sigma$ and $g\circ \gamma$ in $g (\Sigma) \subset N$. In particular when we consider embeddings
of $\Sigma$ there is no confusion. 

Let $\Sigma (t)= g(\Sigma) \cap A(t)$ and $\Gamma (t)= f^{-1} (\Sigma (t))$, $0 \leq t \leq 1$ the pre-image in $A \times [0,1]$.

When $g: \Sigma \to N$ is an immersion, we consider $p_0 \in g(\Sigma) \cap A(0)$,
and pre-images $z_0 \in g^{-1} (p_0)$ and $(q_0,0) \in f^{-1} (p_0)$.  We will obtain the arc $\gamma (t) \in \Sigma$ in a neighborhood of $z_0$ by a lift of an arc $\eta (t)$ in a neighborhood of  $(q_0,0)$ in $\Gamma ([0,1])$ i.e. $g\circ \gamma (t)=f \circ \eta(t)$.
We will extend the arc continuously by iterating the construction.

Since $\Gamma (t)$ represents the intersection of two compact minimal surfaces, we know $\Gamma (t)$ is a set of a finite number of  compact analytic curves $\Gamma_1 (t),...,\Gamma_k (t)$. These curves $\Gamma_i (t)$ are analytic immersions of topological circles. By hypothesis, $\Gamma (t) \cap (\partial A \times [0,1])= \emptyset$ for all $t$. The maximum principle assures that  the immersed curves can not contain a small loop, nor an isolated point. Since $A(t)$ is compact and has bounded curvature, a small loop in $\Gamma (t)$ would bound a small disc $D$ in $\Sigma$ with boundary in $A$. Since $A$ is locally a stable surface, we can consider
a local foliation around the disc and find a contradicttion with the maximum principle.
We say in the following that $\Gamma (t)$ does not contain small loops.

{\bf Claim 1:} We will see that for each $t$ with $\Gamma (t) \neq \emptyset$, $t<1$ there is a $\delta (t) >0$ such that if $(q,t) \in \Gamma (t)$, then there is a $\calC ^{1}$ arc $\eta(\tau)$ defined for $t  \leq  \tau \leq t + \delta (t)$ such that $\eta (t) =(q,t)$ and $\eta(\tau) \in \Gamma (\tau)$ for all $\tau$ (there may be values of $t$
where $\gamma ' (t)=0$).

Since $\Gamma (0) \neq \emptyset$, this will show that the set of $t$ for which $\eta (t)$ is defined is a non empty open set. This defines an arc $\gamma (\tau)$ as a lift of  $f \circ \eta (\tau) \subset A(\tau)$ in a neighborhood of $\gamma (t) \in \Sigma$.

\begin{figure}
\input{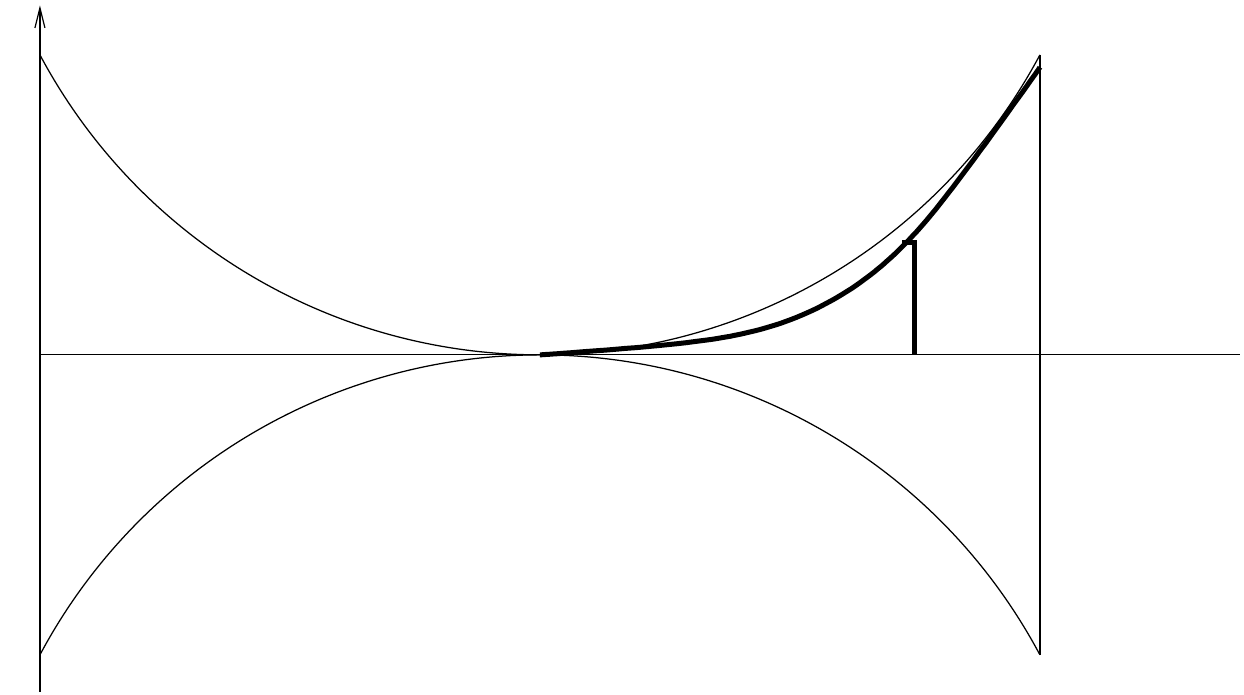_t}
\caption{Neighborhood of a singular point }
\label{fig:figure14d}
\end{figure}
First suppose $(q,t)  \in \Gamma (t)$ is a point where  $A(t)=f (A \times \{ t\})$ and $g(\Sigma)$ are transverse
at $f(q,t)$. Let us consider the ${\calC}^1$ immersions
$$F: A \times [0,1] \to N \times [0,1] \hbox{ with } F(q,t)= (f(q,t),t)$$
$$G: \Sigma \times [0,1] \to N \times [0,1] \hbox{ with } G(z,t)= (g(z),t).$$
Let $\hat M=F(A \times [0,1]) \cap G (\Sigma \times [0,1])$ and $M=F^{-1} (\hat M)$ .  $F(A \times [0,1])$ and $G(\Sigma \times [0,1])$ are transverse at 
$p=F(q,t)$. Thus $\hat M$ is a 2-dimensional surface of $N \times [0,1]$ near $p$ .  We consider $X (t)$ a tangent vector field along $\Gamma (t)$ and $JX(t)$ an orthogonal vector field to $X(t)$ in $T_{(q,t)} M$. 
If $\partial / \partial t \perp T_p \hat M$, then $T_p \hat M = T_{f(q,t)} A(t)=T_{f(q,t)} g(\Sigma)$ and 
$(q,t)$ would be a non transverse point of intersection of $A(t)$ and $g(\Sigma)$. 
Thus $<JX(t), \partial / \partial t > \neq 0$ and we can find $\eta (\tau)$  a smooth path, 
defined for $\tau \in [t-\delta (q), t + \delta (q) ]$ such that $\eta (t)=(q,t)$ and $\eta ' (t)=JX(t)$ is transverse to $\Gamma (t)$ at $(q,t)$.

By transversality and $f$ being $\calC ^1$ in the variable $t$, we have  a $\delta (q)>0$ such that for 
$t - \delta (q)  \leq \tau \leq t + \delta (q)$, $A(\tau)$ intersects $f \circ \eta (\tau)$ in a unique point and this point 
varies continuously with $t- \delta (q)  \leq  \tau \leq t + \delta (q)$.
With a fixed initial point in $\Sigma$, a lift of $f \circ \eta (\tau)$, defines
$\gamma (\tau) \in \Sigma$.

Again by transversality, we can find a neighborhood of $(q,t)$ in $\Gamma (t)$
and a $\delta >0 $ so that the above path $\gamma (\tau)$ exists for $t -\delta \leq \tau \leq t + \delta$, through each point in the neighborhood of $q$. It suffices, to look for a local immersion of a neighborhood of $0$ in $T_p M$ into $M$, to obtain a ${\calC}^1$ diffeomorphism $\psi : B(0) \subset T_pM \to M$. $M$ has the structure of a ${\calC}^1$ manifold in a neighborhood of points of transversality and this structure extends to $F^{-1} (M) \subset A\times [0,1]$.

\begin{figure}
\label{fig:figured2}
\input{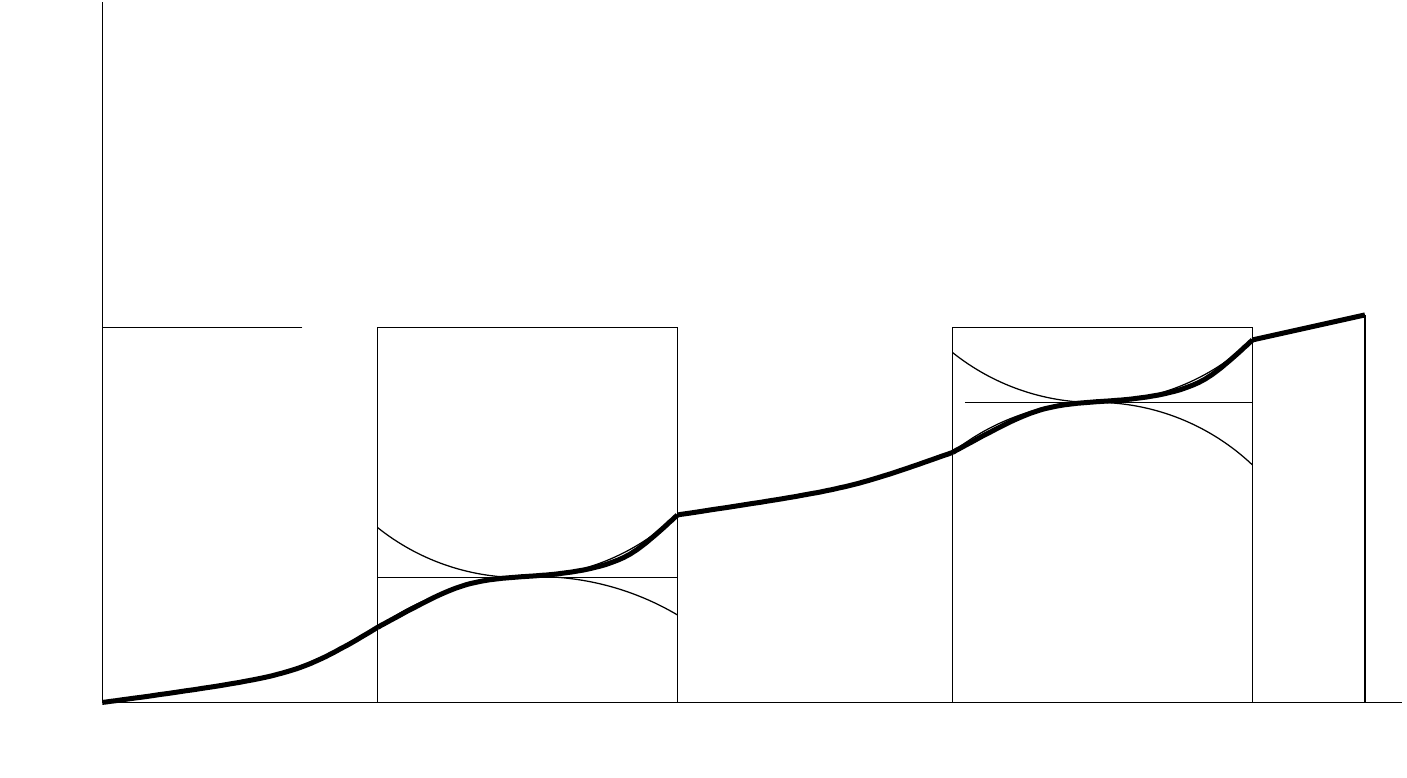_t}
\caption{The curve $\eta (\tau)$ passing through several singularities. }
\end{figure}

We  will find a $\delta >0$ that works in a neighborhood of a singular point $(q,t) \in \Gamma (t)$, where there is a $z \in \Sigma$ such that $f(q,t)=g(z)$ and  $T_{f(q,t)} A(t)=T_{g(z)} g(\Sigma)$. 
We consider singularities of $\Gamma (t)$ where $A(t)$ and $g(\Sigma)$ are tangent. Near a singularity  
$(q,t) \in \Gamma (t)$, $\Gamma (t)$ contains $2k$ analytic curves intersecting at $q$ at equal angles, $k\geq 1$.

Let $V$ be a neighborhood of $q$ in $A$. The set $\Gamma (t) \cap V$ is $2k$ analytic curves. Let $\alpha: ] -\epsilon,\epsilon [ \to V\cap \Gamma (t)$ be a regular parametrization of one curve 
with $\alpha (0) =q$ and $\alpha (\pm \epsilon) \in \partial V$. By transversality as discussed in the previous paragraph $<JX(t), \partial / \partial t > \neq 0$
at $\alpha (s)$ for $s \neq 0$ and $JX (t)$ can be integrated as a curve on $M$ for $t- \delta (s)  \leq \tau \leq t + \delta (s)$.
Here $\delta (s)$ is a ${\calC}^1$ function which can be chosen increasing with $\delta (0)=\delta' (0)=0$.

There exists  a ${\calC}^1$ diffeomorphism  $\phi: \Omega= \{ (s,\tau) \in \R^2; -\epsilon \leq s \leq \epsilon , t - \delta (s)  \leq  \tau \leq t+  \delta (s)\} \to M$ such that $\phi (s,t) = \alpha (s)$ for $s \in ]-\epsilon, \epsilon [$ and $\phi (s ,\tau) \in \Gamma (\tau)$ for  
$t- \delta (s)  \leq \tau \leq t + \delta (s)$. We consider a function $\tau: ]-\epsilon,\epsilon[ \to \R$, such that $(s, \pm \tau (s)) \in \Omega$ and $\tau$ is increasing, $\tau (0)=\tau' (0)=0$ and $\tau (\epsilon)=t+\delta ( \epsilon )$,  $\tau (-\epsilon)=t+\delta (- \epsilon )$.

Now we can construct a path $ \eta (\tau) \in \Gamma (\tau)$ which joins $(q,t)$ to a point in $\Gamma(t+\delta (\epsilon))$. The ${\calC}^1$ arc $f \circ \eta (\tau), t \leq \tau \leq t+\delta (\epsilon)$ is locally parametrized by $\phi (s, \tau (s)), s \in ]0 ,\epsilon[$ and continuously extends to $f(q,t)$ when $\tau \to t$.
Each point $ \alpha (s)$, can be connected ${\calC}^1$, by the arc
$\phi (s, \tau), t \leq \tau \leq \tau (s)$ from $\alpha (s)$ to  $\phi (s, \tau (s))$, and next a subarc of $\eta (\tau)$ for $\tau (s) \leq \tau \leq t+ \delta (\epsilon)$ (see figure \ref{fig:figure14d}). The constant $\delta (\epsilon)$ depends only on $\alpha (\epsilon)=q_1$, and we note $\delta (q_1):=\delta (\epsilon)$.

Now there are a finite number of arcs $\alpha$ in $V-(q)$, with end points $q$ and a collection
of $q_1,q_2,...q_{2k}$. So one has a $0< \delta $ with $\delta < 
\delta (q_i)$ that works in a neighborhood of $q$. The claim is proved.

To complete the proof of the Dragging Lemma, it suffices to prove that $\gamma (t)$ extends $\calC^1$ for any value of $t \in [0,1]$. Assume that there is a point $t_0$ such that the arc $\gamma (t)$ is defined in a $\calC ^1$ manner for $t < t_0$.
By compactness of $A$, the arc accumulates at a point $(q,t_0) \in \Gamma (t_0)$. Remark that the structure
of $M$ along $\Gamma (t_0)$ gives easily the existence of a continuous extension to $t_0$. To ensure
a ${\calC}^1$ path through $t_0$, we need a more careful analysis at $(q,t_0)$.

{\bf Claim 2:} Suppose the path $\gamma (t)$ satisfies the conditions of the Dragging lemma for
$0 \leq t \leq t_0 <1$. Then $\gamma (t)$ can be extended to $0< t< t_0 + \delta$, to be $\calC^1$ and
satisfy the conditions of the Dragging lemma, for some $\delta >0$.

If $(q,t_0)$ is a transversal point, $M$ has a structure of a manifold and if $t_0-\delta (t_0) < t_1 < t_0$ and $\eta (t_1)=(q_1, t_1)$ is in a neighborhood of $(q,t_0)$, we can find a $\calC ^1$ arc that joins $\eta (t_1)$ to $(q,t_0) \in \Gamma (t_0)$.
Next we extend the arc for $t_0 \leq t \leq t_0 + \delta (t_0)$.

If $(q,t_0)$ is a singular point, we consider a neighborhood $V \subset A$ of $q$ and $\Gamma (t_0)$ intersects $\partial V$ in $2k$
transversal points $q_1,...,q_{2k}$.  We consider $V \times [t_1,t_0]$ with $t_0 - \delta (t_0) <t_1 <t_0$. By transversality at $(q_1,t_0),...,(q_{2k},t_0)$, the analytic set $\Gamma (t_1)$ intersects $\partial V$ in $2k$ points and $V$ in $k$ analytical arcs $\alpha_1,..., \alpha _k$. We suppose that $\eta (t_1) \in \alpha_1 \subset  V \times \{t_1\}$. We construct below a monotonous $\calC ^1$ arc from $\eta (t_1)$ to a point $(\hat q,t_2)$ on $\partial V \times \{t_2\}$ for some $t_1 < t_2 < t_0$ and by transversality
an arc from $(\hat q,t_2)$ to a point $(q',t_0) \in \partial V \times \{t_0\}$, using the fact that $ t_0 - \delta (t_0) < t_2$. Next we can extend the arc in a $\calC^1$ manner from $(q',t_0)$ to some point in $\Gamma (t_0 + \delta (t_0))$.

\begin{figure}
\label{fig:figured3d}
\input{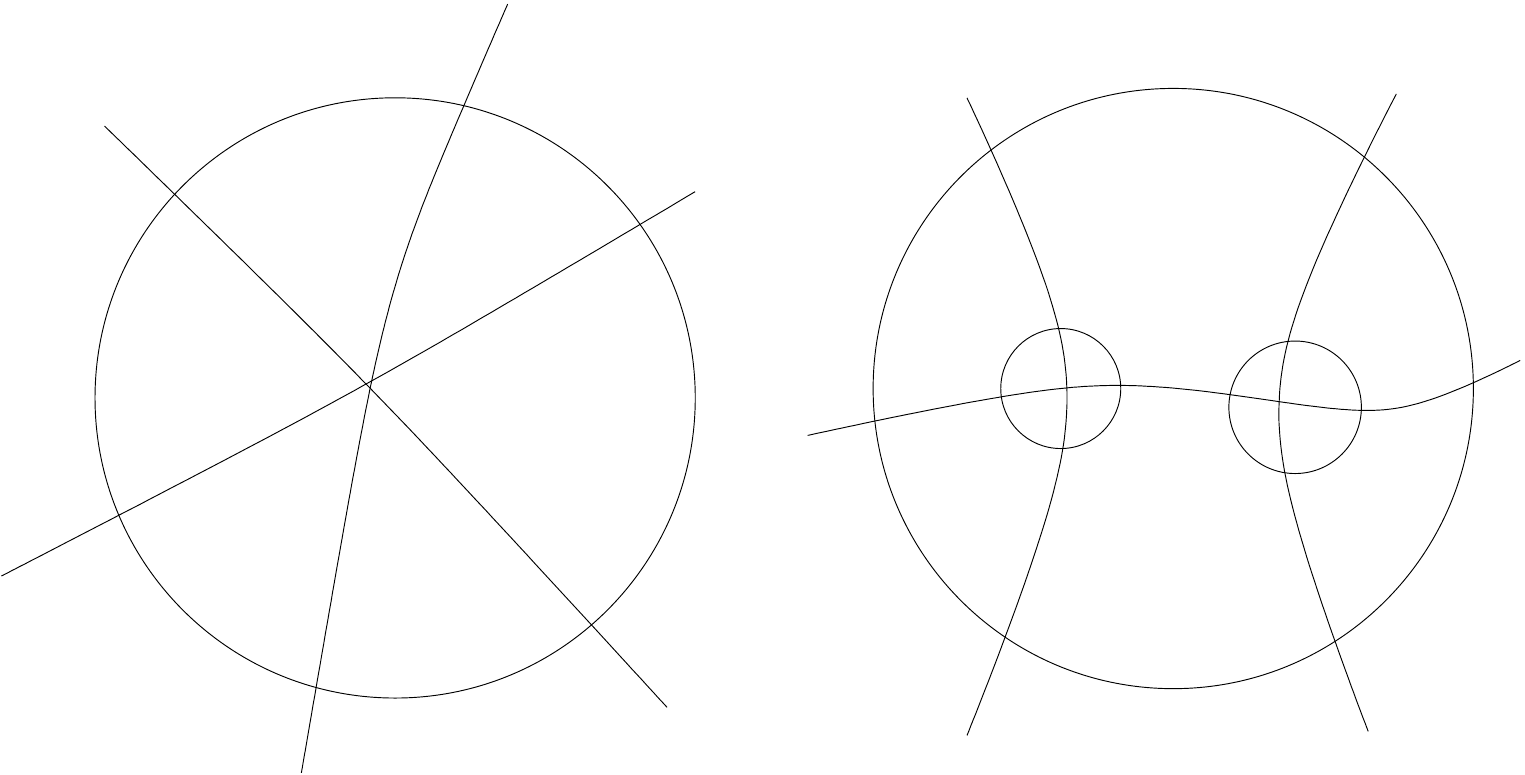_t}
\caption{Left: The curve $\Gamma (t_1)$-Right: The curve $\Gamma (t_1')$.}
\end{figure}

We consider $(\tilde q_1, t_1),...,(\tilde q_\ell, t_1)$ singular points of $\Gamma (t_1) \cap V\times \{t_1\}$ and we denote by $W_1,...,W_\ell$ neighborhoods of $\tilde q_1,...,\tilde q_\ell$ in $A \cap V$. The arc $\alpha_1$ cannot have double points in $V$ without creating small loops. Hence $\alpha_1$ passes through  each $W_1,...,W_\ell$ at most one time, before  joining a point of $\partial V$ (We can restrict $V$ in such a way that there are no 
small loops in $V$). 

First we assume that there is $t_2$ such that for any $t \in [t_1,t_2]$, the curve $\Gamma (t)$ has exactly one isolated singularity in each neighborhood $W_i \times \{t \}$ with the same type
as $\tilde q_i \in \Gamma (t_1)$ ($i=1,...,\ell$) and $t_2< t_1 + \delta (t_1)$. 
If we parametrize $\alpha_1 : [s_0,s_{2\ell+1}]Ê\to \Gamma (t_1)$, we can find $s_1,...,s_{2\ell}$ such that $\alpha_1 (s_{2k-1}), \alpha_1 (s_{2k}) \in  \partial W_k$ and $I_k =[s_{2k-2}, s_{2k-1}]$ are intervals parametrizing transversal points in $\Gamma (t_1)$.

The manifold structure of $M$ gives an immersion $\psi _j: I_j \times [t_1 , t_1 + \delta ] \to M$, $t_1+ \delta < t_2$ and $j=1,...,\ell+1$. In the construction of $\eta$ up to $t_1$, the singular points are isolated;
 then we can assume $\eta (t_1)$ is a regular point of $\Gamma (t_1)$, hence is contained in an
 $\alpha_1 (I_j)$. We construct the beginning of the arc $\eta (\tau)$ as the graph parametrized by
 $\phi_j (s , \tau (s))$ with $\tau$ an increasing function from $t_1$ to $t_1 + \delta/n$ as $s$ varies
 from $\hat s \in I_j$, corresponding to the initial point $\eta (t_1)= \alpha_1 (\hat s)$, to $s_{2j-1}$. Next
 we pass through the singularity $(\tilde q_j, t_1 + 2 \delta/n)$ by constructing
an arc wich joins the point $\phi_j (s_{2j-1},t_1+\delta/n) \in \Gamma (t_1 + \delta/n) \cap \partial W_j$ to
the point $\phi _{j+1}(s_{2j}, t_1 + 3\delta/n) \in \Gamma (t_1 + 3\delta/n) \cap \partial W_j$ (see figure 14).
For a suitable value of $n$ we can iterate this construction, passing through the singularities
$\tilde q_j, \tilde q_{j+1}...$, until we join a point $(\hat q, t_2)$ of $\partial V \times \{ t_2 \}$ and then
we extend the arc up to $t_0$ by transversality outside $V$.

Now we look for this interval $[t_1,t_2]$. Let $t_1<t'_1<t_0$ and $\Gamma (t'_1)$ have several singularities in
some neighborhood $W_k$, or a unique singularity of index less the one of the $\tilde q_k$. We consider in this $W_k$ a finite collection of neighborhoods of isolated singularities $W'_{k,1},...W'_{k,\ell'}$.  We observe, by transversality that there are the same number of components of $\Gamma (t_1)$ and $\Gamma (t'_1)$ in $W_k$  (see figure 15). Hence each $W'_{k,j}$ contains a number of components of  $\Gamma (t'_1)$ strictly less than the number of components of $\Gamma (t_1)$ in $W_k$. The index of the singularity is strictly decreasing along this procedure. We can iterate this analysis up to a point
where each singularity can not be reduced to a simple one. This gives the interval $[t_1,t_2]$.

\end{proof}


\section{Compact minimal annuli}
\label{annuli}

\begin{figure}
\resizebox{10cm}{!}{\input{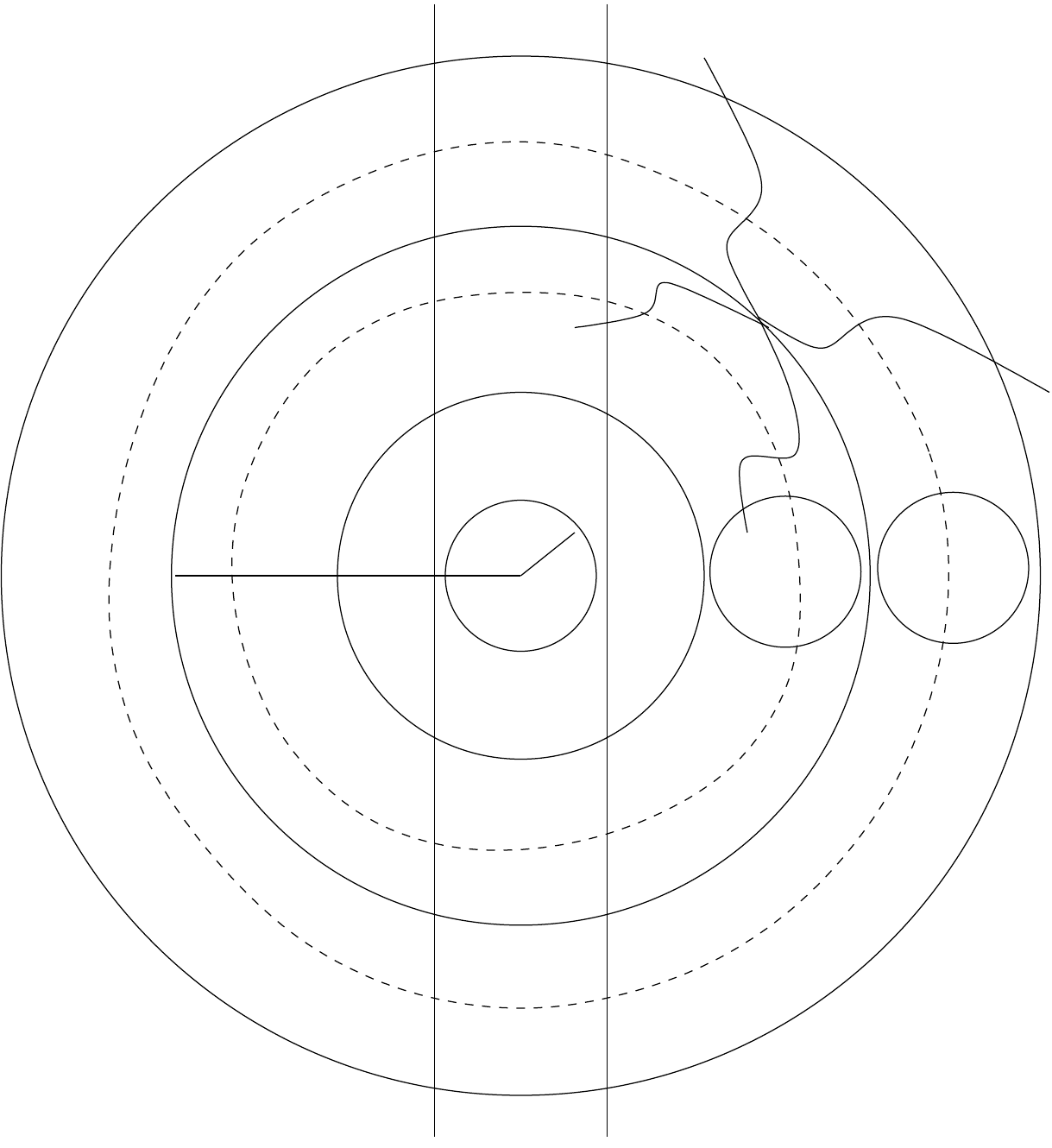_t}}
\caption{ }
\label{fig13}
\end{figure}

We now introduce the compact stable horizontal minimal annulus $F_0$ bounded by circles in vertical planes
$P(c)$ and $P(-c)$ where $P(c)=\{ (x,y,t) \in \R^3; y >0 \hbox{ and }x=c\}$. We also foliate a tubular neighborhood ${\rm Tub} (F_0)$
of $F_0$ by compact minimal annuli $F_s$, $-1 \leq s \leq 1$ and certain
small balls $B_\rho$ containing horizontal minimal annuli ${\calC}_{\ell}$.

We will now construct the stable compact annulus $F_0$. Let $\eta$ be
a circle of radius one in the vertical plane $P(c)$, centered at $(c,y,0)$. The metric
induced on $P(c)$ is euclidean so circles make sense. As $y \to \infty$,
${\rm dist} (\eta, P(0)) \to 0$. The disk of least area in $\HY^2 \times \R$
bounded by $\eta$ is the disc in $P(c)$ bounded by $\eta$ (by the maximum principle).
The area of this disk does not depend $y$. So for $y$ large, there is a compact annulus
in $\HY^2 \times \R$ with one boundary in $P(0)$ and the other boundary $\eta$, whose area is less that the area of
the disk in $P(c)$ bounded by $\eta$. Assume $y$ is large. then by the Douglas criterium there is a least area annulus
$F_+$ having one boundary $\eta$ and the other in $P(0)$. Since $F_+$ has least area w.r.t. this boundary condition,
$\partial F_+ \cap P(0)$ is orthogonal to $P(0)$. Hence $F_0$, the symmetry of $F_+$ through $P(0)$, union $F_+$, is a smooth compact minimal  annulus orthogonal to $P(0)$ and $F_+ \cap P(0)$ is convex. The normal vector along this curve  takes on all directions in the plane $P(0)$.
\begin{figure}
\input{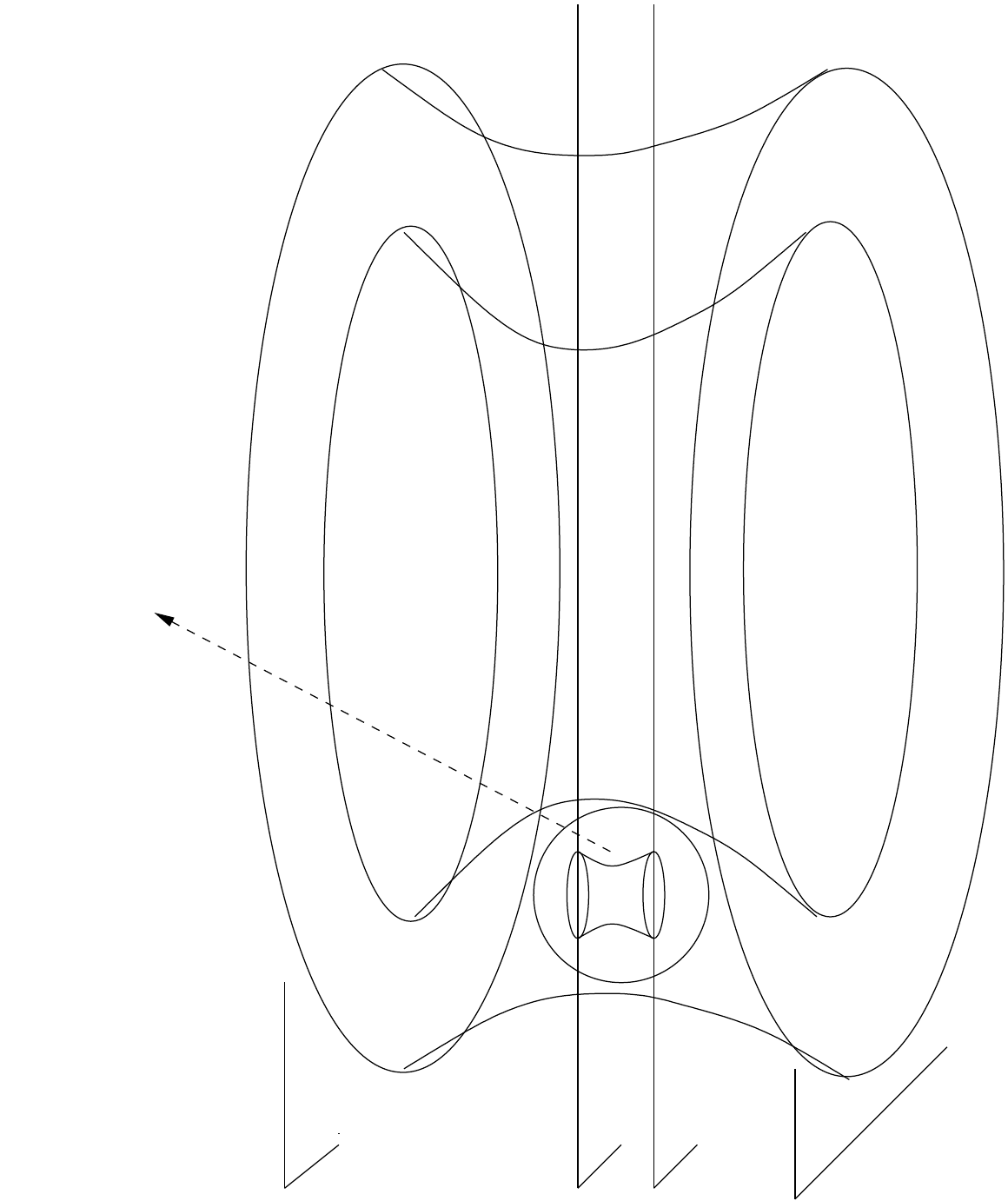_t}
\caption{ }
\label{fig14}
\end{figure}
Let $\sigma$ be symmetry through $P(0)$, $\eta_-=\sigma (\eta)$, $F_-= \sigma (F_+)$. Observe that $F_0$ has least
area with boundary $\eta \cup \eta_-$. For if $B$ is an annulus with $\partial B= \eta \cup \eta_-$, write
$B=B_+ \cup B_-$ where $B_+ = \{ (x,y,t) \in \R^3; y >0 \hbox{ and } 0 \leq x \leq c \} \cap B$, and $B_- = \{ (x,y,t) \in \R^3; y >0 \hbox{ and } -c \leq x \leq 0 \} \cap B$. We know that the ${\rm Area} (B_+) = |B_+| \geq \frac{|F_0|}{2}$ and $ |B_-| \geq \frac{|F_0|}{2}$ so $|B| \geq |F_0|$. Thus $F_0$ is a stable annulus as desired. Let $\gamma_1$ be the geodesic joining 
$(c,y,0)$ to $(-c,y,0)$. We assume $y$ large so that $\gamma_1 \cap F_0 = \emptyset$.

Let $\eta (s)$ be equidistant circles of $\eta$ in $P(c)$, for $|s|$ small, $\eta (0)=\eta$. Let $\eta_- (s)= \sigma (\eta (s))$
be equidistant circles in $P(-c)$, $\eta_- (0)= \eta_-$. Since $F_0$ is strictly stable, there is a $\delta >0$ so that for
$|s| \leq \delta$, there is a foliation of a tubular neighborhood ${\rm Tub} (F_0)$, of $F_0$
by compact minimal annuli $F(s)$, with $\partial F(s)= \eta (s) \cup \eta_- (s)$. choose $\delta$ sufficiently small so that
$${\rm dist}({\rm Tub}(F_0), \gamma_1) >0.$$

Let ${\rm Slab} (c)=\{ (x,y,t) \in \R^3; y >0 \hbox{ and } |x| \leq c \}$.
We denote by $F_0^-$ the  bounded component of   ${\rm Slab} (c)-F_0$, and by $F_0^+$ the other component of
 ${\rm Slab} (c)-F_0$. The annuli $F_s$ are inside $F_0^-$ for $s \in [-1,0]$ and inside $F_0^+$ for $s \in [0,1]$.

We consider ${\rm Tub}^-(F_0)= \cup _{s \in [-1, 0]} F_s$ and ${\rm Tub}^+(F_0)= \cup _{s \in [0, 1]} F_s$;  domains of $\HY^2 \times \R$. 

We consider the curves $S_+=P(0) \cap {F}_{1/2}$ and $S_-=P(0) \cap {F}_{-1/2}$. There exists
a constant $\rho >0$, such that for any $q$ of $S_+$( or $S_-$) the geodesic ball $B_\rho (q)$ of geodesic radius $\rho$ centered at $q$ is contained in ${\rm Tub}^+(F_0)$ (resp. ${\rm Tub}^-(F_0)$).

We can find $\ell >0$ such that any geodesic ball of radius $\rho$ centered at $q$ contains a small compact minimal annulus 
$\calC _\ell$ bounded by two geodesic circles contained in $P(\ell) \cap B_\rho$ and $P(-\ell)\cap B_\rho$.
We say in the following that $\calC _\ell$ is centered at $q \in S_+ \cup S_-$. We denote by $q_1$ the point $\gamma_1 \cap P(0)$; see figure \ref{fig14}.

In summary, we fix $\rho >0$ such that

\begin{enumerate}
\item $3 \rho < {\rm dist}(F_0, \gamma_1) $,
\item $B_{2 \rho} (q_1) \cap {\rm Tub }^- (F_0) = \emptyset $
\item $B_\rho (q) \subset {\rm Tub }^- (F_0)$ for any $q \in S_-$,
\item $B_\rho (q) \subset {\rm Tub }^+ (F_0)$ for any $q \in S_+$,
\end{enumerate}
Now we clearly have the following:

{\bf Claim:} 
  Any continuous curve $\gamma$ in the interior of ${\rm Tub}^+(F_0) \cap {\rm Slab} (\ell)$ (or  ${\rm Tub}^-(F_0)\cap {\rm Slab} (\ell)$) joining $F_0$ to $F_{1}$ (or $F_{-1}$) intersects a compact annulus of the family $C_\ell (q) \subset {\rm Tub}^+(F_0)$(resp. ${\rm Tub}^-(F_0)$) for some point $q \in S_+$ (reps. $q \in S_-$).
\begin{figure}
\includegraphics{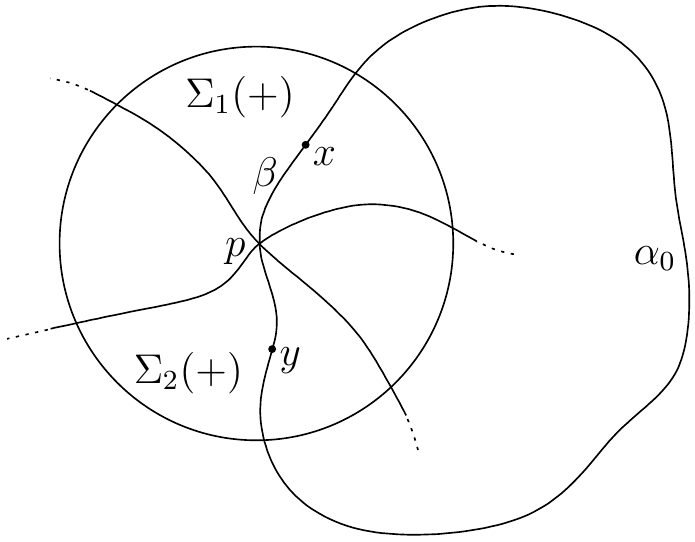}
\caption{ }
\label{fig14bis}
\end{figure}
The next proposition gives at least two components of $\Sigma$ in $F_0^-$ when $F_0$ is tangent to $\Sigma$ at some point.
\begin{proposition}
\label{componentcatenoid}
Let $\Sigma$ be a properly immersed minimal half-plane in ${\rm Slab}(\ell)$. Suppose $\Sigma$ is tangent to
$F_0$ at $p$ and $\partial \Sigma \cap F_0 = \emptyset$. Then there are at least two connected
 components  of $\Sigma$ in $F_0^-$. More precisely if $\Sigma_1 (-)$ and $\Sigma_2 (-)$ are distinct
 local components of $\Sigma$ in $F_0^-$ then $\Sigma_1 (-)$ and $\Sigma_2 (-)$ are in distinct
 components of $\Sigma \cap F_0^-$.
\end{proposition}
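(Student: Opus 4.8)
The plan is to combine the local structure of the intersection $\Sigma\cap F_0$ at the tangency point $p$ with a global combinatorial argument exploiting that $\Sigma$ is (the immersion of) a closed half-plane, hence simply connected; the maximum principle enters only at the very end. First I would record the local picture. Since $\Sigma$ and $F_0$ are both minimal and tangent at $p$, writing them near $p$ as graphs over the common tangent plane $T_pF_0$, the difference of the graphing functions satisfies a second order linear elliptic equation and vanishes to some finite order $N$ at $p$, with $N\ge 2$ precisely because $\Sigma$ and $F_0$ are tangent and not transverse. Hence $\Sigma\cap F_0$ near $p$ is a union of $N$ analytic arcs through $p$, dividing a small disc neighbourhood $D\subset\Sigma$ of $p$ into $2N$ sectors that lie alternately in $F_0^-$ and in $F_0^+$. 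In particular $D$ meets $F_0^-$ in $N\ge 2$ sectors, so there are at least two local components of $\Sigma$ in $F_0^-$; the point is to prove the sharper statement.

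So I would assume, for contradiction, that two of these local components $\Sigma_1(-)$ and $\Sigma_2(-)$ lie in the same component $W$ of $\Sigma\cap F_0^-$ (which is open in $\Sigma$, hence locally path connected). Pick a simple arc $\gamma\subset W$, avoiding $\partial\Sigma$ (possible since $\partial\Sigma\cap F_0=\emptyset$), from a point of $\Sigma_1(-)$ near $p$ to a point of $\Sigma_2(-)$ near $p$, and close it up by a short embedded arc $\mu\subset D$ running back through a consecutive run of sectors of $D$, crossing $F_0$ transversally and avoiding $p$. If $\mu$ visits $k$ distinct $F_0^-$ sectors, $2\le k\le N$, then $\mu$ meets $F_0$ in exactly $m:=2(k-1)\le 2N-2$ points. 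Since $\Sigma$ is simply connected, the Jordan curve $J=\gamma\cup\mu$ bounds a compact disc $\Delta$ in the interior of $\Sigma$; replacing $\mu$ by the arc on the other side of $p$ if necessary (which keeps $m\le 2N-2$), I may assume $p\in\mathrm{int}\,\Delta$, so that all $2N$ arc-ends of $\Sigma\cap F_0$ at $p$ run into $\Delta$.

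The core of the proof is then to analyse the analytic $1$-complex $L:=\overline{\Sigma\cap F_0}\cap\Delta$ in the disc $\Delta$. Its points on $\partial\Delta$ are exactly the $m$ transverse crossings on $\mu$ (the arc $\gamma$ lies in the open region $F_0^-$), each a vertex of degree $1$; the point $p$ is an interior vertex of degree $2N$; and every further interior vertex, being a tangency point of $\Sigma$ with $F_0$, has even degree at least $4$ — say there are $r$ of them. If $L$ were a forest, the handshake identity $2m+2r=\sum_v\deg v\ge 2N+m+4r$ would give $m\ge 2N+2r\ge 2N$, contradicting $m\le 2N-2$. Hence $L$ contains a cycle, and I would take an innermost simple closed curve $C_0\subset L$; it bounds a disc $D_0\subset\Delta$ whose interior is disjoint from $F_0$ and therefore, since $\Sigma\subset{\rm Slab}(\ell)\subset{\rm Slab}(c)$, lies entirely on one side of $F_0$. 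Thus $D_0$ is a compact minimal disc with $\partial D_0=C_0\subset F_0$ lying on one side of $F_0$; comparing it with the minimal foliation $\{F_s\}$ of ${\rm Tub}(F_0)$ and sliding a leaf $F_s$ from $F_0$ toward $D_0$ produces a first interior point of contact, which by the maximum principle forces $D_0\subset F_s$ for that $s$ — absurd, since $\partial D_0\subset F_0$ is disjoint from $F_s$. This contradiction will show that $\Sigma_1(-)$ and $\Sigma_2(-)$ lie in distinct components of $\Sigma\cap F_0^-$, and in particular that $\Sigma$ meets $F_0^-$ in at least two components.

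\emph{Main obstacle.} The delicate step is the last one: sliding $\{F_s\}$ against $D_0$ is only justified if $D_0$ stays inside the tubular neighbourhood ${\rm Tub}(F_0)$ where the leaves live. A small innermost loop causes no trouble (and such loops are ruled out directly by the maximum principle, just as in the Dragging Lemma), but a large innermost cycle could a priori bound a disc that leaves ${\rm Tub}(F_0)$ across $F_{1}$ or $F_{-1}$. Controlling this — bounding how far a sheet of $\Sigma$ on one side of $F_0$ can travel before it returns to $F_0$ — is exactly what the ambient slab hypothesis $\Sigma\subset{\rm Slab}(\ell)$, the barrier annuli $\calC_\ell(q)$, and the Claim stated just before the proposition are designed for, and this confinement is the real work.
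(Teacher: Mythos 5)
Your construction up to the point where you produce the disc $D_0$ is fine, and it is in fact a genuinely different (and more combinatorial) route than the paper's: the paper closes the path $\alpha_0\subset\Sigma\cap F_0^-$ with a local arc $\beta_0$ through $p$ that stays in $F_0^-$ except at $p$, so the disc $D$ bounded by $\alpha_0\cup\beta_0$ automatically contains the $F_0^+$ sectors at $p$, and one immediately gets a compact piece of $D$ lying in $\overline{F_0^+}$ with boundary on $F_0$ -- no transverse crossings, no graph count, no innermost cycle. Your handshake argument (modulo the garbled identity, which should read $\sum_v\deg v=2E\le 2(V-1)=2(m+r)$) does legitimately produce a closed curve of $\Sigma$ on $F_0$ bounding a disc with interior on one side, so both routes arrive at the same kind of object: a compact minimal piece of $\Sigma$ with boundary on $F_0$, lying on one side of $F_0$.

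The genuine gap is exactly the step you flag as the ``main obstacle,'' and it is not a technicality to be patched by ``confinement'': it is the heart of the paper's proof, and the paper resolves it in the opposite way from what you suggest. One does not show that $D_0$ stays inside ${\rm Tub}(F_0)$; rather, sliding the leaves $F_s$ shows that a compact piece with boundary on $F_0$ and interior on one side \emph{must} escape the foliated collar, i.e.\ it meets every leaf and in particular $F_{1}$ (or $F_{-1}$). Then the Claim preceding the proposition applies: since the piece lies in ${\rm Slab}(\ell)$ and joins $F_0$ to $F_{\pm1}$ inside ${\rm Tub}^\pm(F_0)$, it meets one of the small annuli $\calC_\ell(q)$, $q\in S_\pm$. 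Finally the Dragging Lemma is invoked: translating $\calC_\ell(q)$ vertically, its boundary circles remain in $P(\ell)\cup P(-\ell)$ and hence never meet $\Sigma\subset{\rm Slab}(\ell)$, so points of the disc are dragged along until the annulus is outside the convex hull of $F_0$; since the disc's boundary lies on $F_0$ (in your version) or in $\overline{F_0^-}$ (in the paper's), this violates the convex hull property of compact minimal surfaces. Without this Claim--Dragging--convex hull mechanism (or a substitute), your argument only rules out discs $D_0$ that happen to stay in ${\rm Tub}(F_0)$, so the proof as proposed is incomplete precisely where the real content lies.
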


\begin{proof}
If not we can find a path $\alpha_0$ in $\Sigma \cap F_0^-$, joining a point $x \in \Sigma_1 (-)$ and $y \in \Sigma_2 (-)$. Then join
$x$ to $y$ by a local path $\beta_0$ in $\Sigma$ going through $p$, but $\beta_0 \subset F_0^-$ except at $p$ 
(see figure \ref{fig14bis}).
Let $\Gamma = \alpha_0 \cup \beta_0 \subset F_0^-$. Since $\Sigma$ is a half-plane, $\Gamma$ bounds a disk $D$ in
 $\Sigma$.  By construction $D$ contains points in the interior of $F_0 ^+$.

Hence there is a compact  component of $D$  in $F_0^+$ with boundary in $F_0$. By the maximum principle
$D \cap F_t \neq \emptyset$, $0 \leq t \leq 1$ and there is at least one point $p_1$ of $D \cap F_{1}$. Using compact annuli
${\calC}_\ell$ inside ${\rm Tub}^+(F_0)$, we can find an annulus ${\calC}_\ell (q)$  which intersects $D$ (by the claim). 
Now translate this catenoid in the interior $F_0^+$ to a point outside the convex hull of $F_0$. Apply the Dragging lemma 
to obtain points of $D$ outside the convex hull. This contradicts the maximum principle.
\end{proof}
\section{A family of graph barriers}
\label{graphebarriere}
In this section we study a one parameter family of surfaces $\Sigma_n$ graphs on a sequence of domains $\Omega_n$ of $\HY^2$bounded by two geodesics.  In the unit disk model of $\HY^2= \{ (x,y)\in \R^2; x^2+y^2 < 1 \}$,
we consider two geodesics $\gamma_n$ and $\gamma_{-n}$ passing through the points $(-1+1/n,0)$ and $(1-1/n,0)$) and both orthogonal
to $\{ y=0\}$. We consider the domain $\Omega_n$ bounded by $\gamma_n$ and $\gamma_{-n}$ (see figure \ref{fig16}, Left). We solve the minimal
graph equation for a function  $u_n:\Omega_n \to \R$ with $u_n= + \infty$ on $\gamma_n \cup \gamma_{-n}$ and $u_n =0$ on 
$\partial_{\infty} \Omega_n$, the boundary at infinity of $\Omega_n$.

The graph $u_n$ has a line of curvature $\Gamma_n$ over the geodesic $\gamma_0=\{ (x,y) \in \HY^2; x=0\}$. The following proposition describes the limit of the graphs $\Sigma_n$ when $n \to \infty$.

\begin{proposition}
\label{graphesigma}
The sequence of solutions of the minimal graph equation in the sequence of domains $\Omega_n$ with boundary data
$u_n= + \infty$ on $\gamma_n \cup \gamma_{-n}$ and $u_n =0$ on $\partial_{\infty} \Omega_n$, converge
uniformly to the horizontal section $\HY^2 \times \{ 0\}$.
\end{proposition}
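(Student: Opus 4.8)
The plan is to realise the zero section as the \emph{monotone} limit of the graphs $u_n$ and then to identify that limit by its behaviour at the ideal boundary $\partial_\infty\HY^2$. First I would record two applications of the maximum principle. Since the constant $0$ solves the minimal graph equation and $u_n$ has boundary values $0\ge 0$ on $\partial_\infty\Omega_n$ and $+\infty\ge 0$ on $\gamma_n\cup\gamma_{-n}$, the generalized maximum principle (obtained, as in \cite{collinrosenberg}, by exhausting $\Omega_n$ with compact subdomains on whose boundary one replaces $+\infty$ by large constants) gives $u_n\ge 0$. Next, as $n$ increases the tip $(-1+\tfrac1n,0)$ of $\gamma_n$ moves toward $(-1,0)$ and the tip of $\gamma_{-n}$ toward $(1,0)$, so the cut-off lenses shrink, the domains are nested, $\Omega_n\subset\Omega_{n+1}$, and $\bigcup_n\Omega_n=\HY^2$. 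On $\Omega_n$ the restriction of $u_{n+1}$ is a minimal graph which is finite on the interior curves $\gamma_{\pm n}$ of $\Omega_{n+1}$ and equals $0$ on $\partial_\infty\Omega_n$ (a sub-arc of $\partial_\infty\Omega_{n+1}$); comparing it with $u_n$, which is $+\infty$ on $\gamma_{\pm n}$ and $0$ on $\partial_\infty\Omega_n$, the maximum principle yields $u_{n+1}\le u_n$ on $\Omega_n$. Hence $u_n\downarrow u_\infty\ge 0$ pointwise on $\HY^2$; on any compact $K$ one has $0\le u_n\le u_{n_0}$ once $K\subset\Omega_{n_0}$, so by the interior gradient and Schauder estimates for minimal graphs the convergence is $C^\infty_{\mathrm{loc}}$, $u_\infty$ is an entire minimal graph over $\HY^2$, and by Dini's theorem $u_n\to u_\infty$ uniformly on compact sets.

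Next I would control $u_\infty$ at the ideal boundary away from the two points $(\pm1,0)$. Fix geodesics $\sigma_{\pm}$ near $(\pm1,0)$, put $\Omega^\sigma=\HY^2\setminus(L_{\sigma_+}\cup L_{\sigma_-})$ where $L_{\sigma_\pm}$ are the lenses they bound, and let $\Lambda^\sigma$ be the Scherk-type graph on $\Omega^\sigma$ equal to $+\infty$ on $\sigma_{\pm}$ and $0$ on $\partial_\infty\Omega^\sigma$ (it exists by \cite{collinrosenberg}, just as $u_n$ does, and is continuous up to $\partial_\infty\Omega^\sigma$ away from the endpoints of $\sigma_\pm$). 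For $n$ large, $\gamma_{\pm n}\subset L_{\sigma_\pm}$, hence $\Omega^\sigma\subset\Omega_n$, the curves $\sigma_\pm$ lie in the interior of $\Omega_n$, and $\partial_\infty\Omega^\sigma\subset\partial_\infty\Omega_n$; thus on $\partial\Omega^\sigma$ we have $\Lambda^\sigma=+\infty\ge u_n$ on $\sigma_\pm$ and $\Lambda^\sigma=0=u_n$ (as a limit) on $\partial_\infty\Omega^\sigma$, so the maximum principle gives $u_n\le \Lambda^\sigma$ on $\Omega^\sigma$, and letting $n\to\infty$, $u_\infty\le\Lambda^\sigma$ there. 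As $\sigma_\pm$ can be chosen arbitrarily close to $(\pm1,0)$, $u_\infty$ is locally bounded on $\HY^2$ off $(\pm1,0)$; and for any $\xi\in\partial_\infty\HY^2\setminus\{(\pm1,0)\}$, choosing $\sigma$ so that $\xi$ is an interior point of $\partial_\infty\Omega^\sigma$, we get $0\le\limsup_{z\to\xi}u_\infty(z)\le\lim_{z\to\xi}\Lambda^\sigma(z)=0$. So $u_\infty$ is an entire nonnegative minimal graph whose ideal boundary value is $0$ at every point of $\partial_\infty\HY^2$ except possibly $(\pm1,0)$.

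It remains --- and this is the heart of the matter --- to prove $u_\infty\equiv 0$, which by the strong maximum principle is the same as excluding $u_\infty>0$. Here I would use the symmetry: $u_\infty$ is invariant under $(x,y)\mapsto(\pm x,\pm y)$, so the flux of $u_\infty$ across any geodesic separating $(1,0)$ from $(-1,0)$ vanishes, which forbids monotone divergence of $u_\infty$ to $+\infty$ toward either of those points and thereby gives a priori boundedness of $u_\infty$ near $(\pm1,0)$, hence on all of $\HY^2$. Once $u_\infty$ is bounded, the two exceptional ideal points are removable --- having zero capacity, exactly as in the removable-singularity theorem for bounded harmonic functions --- so $u_\infty$ has ideal data $0$ everywhere, and hence $u_\infty\equiv 0$ by uniqueness of the Dirichlet problem at infinity for bounded minimal graphs. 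Together with the first step this shows $u_n\to 0$ uniformly on compacta of $\HY^2$, i.e.\ the graphs $\Sigma_n$ converge to $\HY^2\times\{0\}$.

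The hard part is clearly this last step, ruling out a nonzero entire minimal graph that concentrates at the two ideal points to which the blow-up data $\gamma_{\pm n}$ collapse. The obstruction is structural: any barrier that dominates $u_n$ must itself be $+\infty$ somewhere near the moving curves $\gamma_{\pm n}$, hence near the collapse points $(\pm1,0)$, and such a barrier is again an object of the same type as $u_n$ --- so a naive comparison, or a blow-up by hyperbolic isometries expanding near $(-1,0)$, is self-referential and reproduces the same problem at a smaller scale. The content is therefore precisely the a priori control of $u_\infty$ near those two points, for which one genuinely needs the symmetry of the configuration $\gamma_{\pm n}$ (or an explicit comparison, after passing to the half-space model, with the foliations $S^0_T$ of Lemma~\ref{barrier}); everything else --- the monotonicity, the decreasing limit, and the boundary estimate off the two points --- is routine maximum-principle work.
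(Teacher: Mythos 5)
Your first two steps are sound and essentially reproduce the paper's opening: the nested domains give a decreasing sequence, the limit $u_\infty\ge 0$ is an entire minimal graph, and comparison with fixed ideal Scherk graphs shows $u_\infty$ tends to $0$ at every ideal point other than $(\pm1,0)$ (the paper only needs, and only proves, this decay along the symmetry geodesic $\gamma_0$). The genuine gap is exactly where you locate it yourself: the exclusion of a nonzero limit, i.e.\ control of $u_\infty$ near the two collapse points. Your argument there is an assertion, not a proof. Reflection symmetry gives $\partial u_\infty/\partial x=0$ along $\gamma_0$, hence zero flux across that one geodesic (not across ``any geodesic separating $(1,0)$ from $(-1,0)$''), and no implication of the form ``zero flux across $\gamma_0$ forbids $u_\infty\to+\infty$ at the ideal point $(\pm1,0)$'' is established -- a solution could concentrate near $(1,0)$ with arbitrarily small flux through a fixed faraway geodesic, so boundedness of $u_\infty$ does not follow. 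The subsequent ``zero capacity'' removability of the two ideal points is borrowed from linear potential theory of bounded harmonic functions; for the quasilinear minimal graph equation at points of the ideal boundary it would itself require a barrier argument, and in any case it presupposes precisely the boundedness you have not proved. So the chain flux $\Rightarrow$ boundedness $\Rightarrow$ removability $\Rightarrow$ uniqueness is missing its first and decisive link.

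For comparison, the paper never controls $u_0:=u_\infty$ near $(\pm1,0)$ at all. It argues: if $b:=\sup_{\gamma_0}u_0>0$, then $b$ is attained at an interior point $p\in\gamma_0$ (since $u_0\to0$ at the two ends of $\gamma_0$); on the region $W_n$ between $\gamma_n$ and $\gamma_0$ -- whose closure stays away from $(\pm1,0)$ -- one compares $u_0$ with the graphs $v_n$ of \cite{M-R-R}, equal to $+\infty$ on $\gamma_n$ and to $b$ on the ideal boundary, which are known to converge uniformly to the constant $b$. The maximum principle gives $u_0\le v_n$ on $W_n$, hence in the limit $u_0\le b$ on a half-plane, and by symmetry on all of $\HY^2$; then $p$ is an interior maximum, so $u_0\equiv b$, contradicting the decay at infinity. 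The quantitative input $v_n\to b$ from \cite{M-R-R} is exactly the ingredient that replaces the a priori bound near $(\pm1,0)$ that your sketch needs and does not supply; to repair your proof you would either have to import that comparison (as the paper does) or genuinely prove boundedness of $u_\infty$ near the two ideal points, which your flux remark does not do.
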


\begin{figure}
\resizebox{7cm}{!}{\input{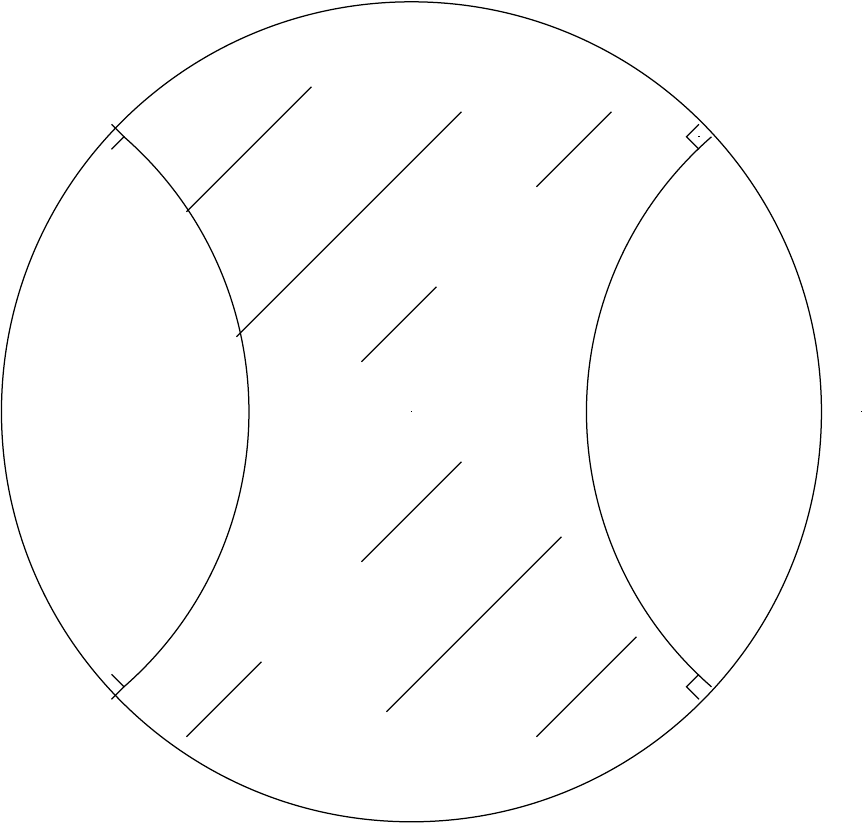_t} } \hfill
\resizebox{7cm}{!}{\input{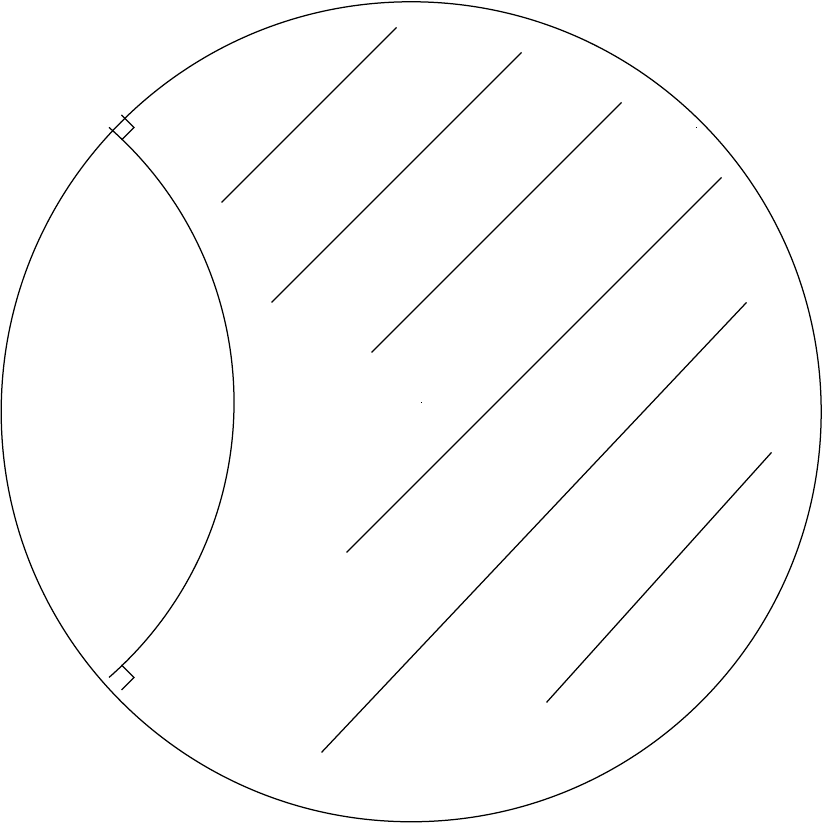_t}}
\caption{Left: Domain $\Omega_n$ of function $u_n$. Right: Domain of functions $v_n$. }
\label{fig67}
\end{figure}

\begin{proof}
The sequence of domains $\Omega_n$ is an increasing sequence in $\HY^2$; $\Omega_{n} \subset \Omega_{n+1}$. The maximun
principle assures that the sequence  is decreasing with $0 \leq u_{n+1} (q) \leq u_n (q)$ for any $q \in \Omega_n$. Hence the
sequence of graphs $\Sigma_n$ converges to an entire graph of a function  $u_0: \HY^2 \to \R$. 
We will prove that $u_0 \equiv 0$. 

It suffices to prove that $u_0 = 0$ on the geodesic $\gamma_0$. If not we can assume
that $\sup_{\gamma_0} u_n = a_n \geq b >0$ and there is $p \in \gamma_0$ such that $u_0 (p)=b$. 

This point $p$
exists because $u_0$ takes value $0$ at infinity of $\gamma_0$. This comes from the fact that $(u_n)$ is a decreasing
sequence hence $u_n =0$ at infinity of $\Omega_n$ for any $n \in \N$.

We consider the sequence of minimal surfaces $\tilde \Sigma_n$ graphs  of a function $v_n$ on a domain $V_n$ bounded by $\gamma_n$,
with boundary data $v_n=+\infty$ on $\gamma_n$ and $v_n = b$ on $\partial_\infty V_n$. This family of graphs is well known and Mazet, Rodriguez and Rosenberg proved in \cite{M-R-R} that the sequence $v_n$ converges uniformly to $v_0 \equiv b$, when $n \to \infty$.

We restrict the function $v_n : V_n \to \R$ to the domain $W_n$ bounded by the geodesic $\gamma_n$ and the geodesic $\gamma_0$.
On $W_n$ we claim that the maximum principle applies to show that $v_n \geq u_0$. To see this its suffices to
check for the inequality on the boundary of the domain $W_n$. On $\gamma_n$, the function $v_n = + \infty > u_0$ and on
$\gamma_0$, we have $v_n \geq b \geq u_0$. At the boundary at infinity $\partial _{\infty} V_n$ we have $v_n= b > 0=u_0$.

Now let $n \to \infty$ to show that $v_n \to v_0=b$ and $v_0 \geq u_0$. This proves by symmetry that the function $u_0 \leq b$. 
The point $p$ of $\gamma_0$ where $u_0 (p)=b$ is an interior maximum point of the function, hence $u_0 =b$. This contradicts
the fact that $u_0$ take the value $0$ at the boundary at infinity of $\HY^2$.
\end{proof}

We consider an end $E_{(1,0)}(c) $ contained in a slab $S= \{ (x,y,t)\in \R^3 ; y \geq 1 \hbox{ and } -c_1 \leq t \leq c_1\}$ and we use the proposition \ref{graphesigma}  to obtain  ${\calC}^0$ convergence:

\begin{proposition}
\label{coordonee3}
An end $A$ of type $(p,0)$ of a properly immersed minimal surface $\Sigma$ in $M \times \SY^1$ has  third coordinate
which has a limit at infinity i.e. that $A$ converges in the ${\calC}^0$ norm to $A_{(p,0)}$ at height $a \in ]-c_1, c_1[$.
\end{proposition}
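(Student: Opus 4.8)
The plan is to reduce the statement to a one–dimensional convergence claim for the third coordinate along a periodic lift of the end, and then to exploit the graph barriers of Proposition~\ref{graphesigma} as a foliation against which the end can be compared by the maximum principle. First I would pass to a connected lift $E\subset\{y\ge1\}\times\R$ of a sub-end of $A$. Replacing the parabolic $\psi$ by $\psi^{p}$ — equivalently, viewing $E$ as an annular end over the larger cusp $\{y\ge1\}/[\psi^{p}]$ — I may assume $E$ is invariant under $(x,y,t)\mapsto(x+\tau,y,t)$, i.e. of type $(1,0)$, and by the Trapping Theorem~\ref{trapping} that $E$ lies in a slab $S=\{y\ge1,\ -c_{1}\le t\le c_{1}\}$. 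It then suffices to prove that $t|_{E}$ tends to a limit $a$ as $y\to\infty$: the full $\calC^{0}$ statement (Hausdorff closeness of $E\cap\{y\ge Y\}$ to $E_{(1,0)}(a)\cap\{y\ge Y\}$) follows from this together with the soft fact that $A$ is a properly immersed annular end running into the cusp, so its projection is cofinal in $\calC$.

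Next I would install the barriers. Choosing a hyperbolic isometry of $\HY^{2}$ carrying the puncture of $\calC$ to an interior point of the ideal arc $\partial_{\infty}\Omega_{m}$ (possible for all large $m$, since these arcs exhaust the circle at infinity minus two points) gives $\{y\ge1\}\subset\Omega_{m}$, so that the graphs $\Sigma_{m}+c$, $c\in\R$, of the functions $u_{m}+c$ over $\Omega_{m}$ foliate $\Omega_{m}\times\R\supset E$ by pairwise disjoint minimal surfaces. Since $u_{m}\ge0$ on $\{y\ge1\}$ and, for fixed $m$, $u_{m}(x,y)\to0$ as $y\to\infty$ (boundary regularity at the puncture, together with a maximum-principle comparison on the horodisk $\{y\ge1\}$), each leaf $\Sigma_{m}+c$ approaches height $c$ deep in the cusp; moreover, by Proposition~\ref{graphesigma}, letting $m\to\infty$ makes the leaves arbitrarily flat over any fixed region. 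I would then set $s:=t-u_{m}$ on $E$, so that $P\in\Sigma_{m}+s(P)$ and $s-t\to0$ as $y\to\infty$; hence $t$ and $s$ have the same $\limsup$, call it $a_{+}$, and the same $\liminf$, call it $a_{-}$, along $E$.

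The crucial observation is that $s$ has no interior local extremum: at an interior local maximum $P$, the surface $E$ would be tangent at $P$ to the leaf $\Sigma_{m}+s(P)$ and lie locally on its lower side, so the strong maximum principle forces $E$ to coincide with that leaf near $P$; minima are symmetric. Thus either $E$ is an entire leaf $\Sigma_{m}+a$ — whence $t=a+u_{m}\to a$ and we are done — or $s$ descends to a non-constant, bounded, smooth function on $\bar A=E/[\psi]\cong\SY^{1}\times[0,\infty)$ with no interior local extremum. Applying the maximum principle to horocyclic slabs, $M(Y)=\max_{E\cap\{y=Y\}}s$ and $\mu(Y)=\min_{E\cap\{y=Y\}}s$ each converge as $Y\to\infty$: a failure of convergence would produce, between two horocycles on which $s<c$, a compact component of $\{s\ge c\}$, and hence a forbidden interior maximum. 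A level-set argument on the annular end $\bar A$, using properness to control the components of $\{s=c\}$ near infinity, then gives $\lim M=\lim\mu$; consequently the oscillation of $t$ on $E\cap\{y\ge Y\}$ tends to $0$, so $t\to a$ for some $a$, and finally $a\in(-c_{1},c_{1})$ because $\partial E$ sits at bounded height strictly inside the slab and a further comparison with the slices $\{t=\pm c_{1}\}$ (or the barriers $S^{0}_{T}$) rules out the endpoints.

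I expect the main obstacle to be exactly this last level-set step — excluding that $s$ oscillates at infinity between two nearly horizontal ``walls.'' This is where the convergence $\Sigma_{m}\to\HY^{2}\times\{0\}$ of Proposition~\ref{graphesigma} is genuinely used: by letting $m\to\infty$ one traps $E$ between two leaves that become arbitrarily flat over the cusp, and the resulting pinching is what forbids persistent oscillation and simultaneously upgrades the conclusion from mere boundedness of heights to $\calC^{0}$ convergence of $A$ to $A_{(p,0)}$.
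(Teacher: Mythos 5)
The decisive step of your argument is missing, and it is precisely the content of the proposition. Everything you actually argue (the foliation by the vertical translates $\Sigma_m+c$, the absence of interior extrema of the leaf parameter, the convergence of the horocycle quantities $M(Y)$ and $\mu(Y)$) only produces two numbers $a_-\le a_+$ with $a_-\le t\le a_+$ asymptotically on $E$; the proposition is the assertion $a_-=a_+$, and this is exactly the step you defer to a ``level-set argument'' and acknowledge as the main obstacle. The mechanism you point to cannot close it: as $m\to\infty$ the leaves $\Sigma_m+c$ converge to the horizontal sections $\HY^2\times\{c\}$, so being squeezed deep in the cusp between $\Sigma_m+a_-$ and $\Sigma_m+a_+$ carries no more information than the trapping between horizontal planes already furnished by Theorem \ref{trapping}; the vertical gap between your two trapping leaves is $a_+-a_-$, which is the quantity to be shown zero, and the flatness in Proposition \ref{graphesigma} does nothing to shrink it. Nor can any soft argument of this kind suffice: $t$ restricted to $E$ is a bounded harmonic function on an annular end whose conformal type is unknown at this stage, and on a conformally hyperbolic end a bounded harmonic function without critical points need not have a limit at infinity. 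The paper closes this gap with a different idea which your sketch does not contain: one first finds a height $a$ such that $A\cap E_{(1,0)}(a)$ has a non-compact component meeting $\partial A$; its lifts $\Gamma_1$ and $\Gamma_2=\psi\circ\Gamma_1$, together with a compact arc $\Gamma_3\subset\partial E$, cut a fundamental domain of $E$ whose lateral boundary lies at the single height $a$. Then horizontally translated graphs $\sigma_n(\Sigma_n)$ of Section \ref{graphebarriere}, normalized to asymptotic value $a$ and with the $+\infty$-geodesic $\bar\gamma$ kept away from $\Gamma_3$, can be compared with this fundamental domain (no first contact point), and their line of curvature over $\sigma_n\circ\gamma_0$ converges to height $a$ as $n\to\infty$. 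Because the same level $a$ anchors both the upper and the lower barriers, the two bounds converge to the same value, which is what rules out oscillation.

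Two secondary points, both repairable: (i) $s=t-u_m$ does not descend to $\bar A=E/[\psi]$, since $u_m$ is not invariant under $x\mapsto x+\tau$; consequently the superlevel component of $\{s\ge c\}$ trapped between two horocycles lives on the non-compact surface $E$ and is not obviously compact, so the ``forbidden interior maximum'' need not be attained as written. The clean repair is to run the same tangency argument directly with $t$ against the foliation by horizontal sections (which are minimal and $\psi$-invariant), working in the quotient where the bands $\{Y_1\le y\le Y_3\}$ are compact by properness; but this again only yields the existence of the limits $a_\pm$, not their equality. (ii) The containment $\{y\ge1\}\subset\Omega_m$ for large $m$ and the claim $u_m\to 0$ as $y\to\infty$ rest on normalizing the puncture to an interior ideal point of $\partial_\infty\Omega_m$ and on boundary regularity of $u_m$ there; this should be stated, and it is also why the paper prefers to translate the graphs by $\sigma_n$ rather than fix one domain containing the whole horodisk.
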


\begin{remark}
We will prove in the next proposition that $A$ converges uniformly in the  ${\calC}^2$ norm  to $A_{(p,0)}$ i.e; $A$ is a graph converging
uniformly to the cusp $A_{(p,0)}$ at height $\{ t=a\}$.
\end{remark}

\begin{proof}
A covering $E$ of $A$ is contained in a slab bounded by $E_{(1,0)} (-c_1)$ and $E_{(1,0)}(c_1)$. We study the intersection of $E$ with the level section
$E_{(1,0)}(c)=\{ (x,y,t)\in \R^3 ; y \geq 1 \hbox{ and }t=c\}$ with $c \in ]-c_1, + c_1[$. If $\Gamma$ is a compact component  of $A \cap E_{(1,0)}(c)$ then $\Gamma \cap \partial A \neq \emptyset$. Otherwise $\Gamma$ bounds a disc $D$ or a subend $A_0$. In  both cases, the maximum principle of proposition \ref{graphesigma} applies and $A=A_{(p,0)}$ i.e. $A$ is a flat standard end of height $c$.

Varying the value $c \in ]-c_1, + c_1[$, there is a value $a \in ]-c_1, + c_1[$, such that the intersection $A \cap E_{(1,0)}(c)$ has a non compact
component denoted by $\Gamma$. By proposition \ref{graphesigma}, $\Gamma \cap \partial E \neq \emptyset$. In the lift $E$ of $A$, we consider two lifts of
 $\Gamma$ denoted by $\Gamma_1$ and $\Gamma_2:= \psi \circ \Gamma_1$. The curves $\Gamma_1, \Gamma_2$
and a compact arc $\Gamma_3 \subset \partial E$  bound a fundamental domain, (see figure \ref{fig67}).

We consider a graph obtained by a translation $\sigma_n$ of $\Sigma_n$ of section \ref{graphebarriere} such that the geodesic $\gamma_{-n}$ translates to a fixed geodesic $\bar \gamma$ which does not intersect $\Gamma_3 \subset \partial E$ and $\Sigma_n$ is above $E$ with boundary data $u_n=a$ at the boundary at infinity. The graph $\Sigma_n$ has a line of curvature which is a graph over the translation of the geodesic $\gamma_0$ denoted by $\sigma_n \circ \gamma_0$ and the boundary curve
$\gamma_n$ is sent to $\sigma_n \circ \gamma_n$. We remark that $\sigma_n \circ \gamma_0$ is a distance $n$ from $\bar \gamma$ and  $\sigma_n \circ \gamma_n$ at a distance $2n$ from $\bar \gamma$.

We let $n \to \infty$ and fix the geodesic  $\bar \gamma$, using the horizontal
isometry $\sigma_n$. These graphs are conjugate to the graphs of the sequence $u_n$ of proposition \ref{graphesigma}. We know that the graph over $\sigma_n \circ\gamma_0$ is converging to the height $a$, hence we see that the end $A$ cannot have a point above the height $a$ at infinity. We do the same
with a symmetric graph with value $-\infty$ on the geodesic $\gamma_{-n}$ and $\gamma_{+n}$. This proves that the end $A$ is trapped between two graphs which have  third coordinate going to the same value $a$. Hence the end $A$ converges in the ${\calC}^2$ norm to a cusp  end $t=a$.
\end{proof}

\section{Proof of the theorem in $M \times \SY^1$.}

The surface $\Sigma$ is properly immersed. By lemma \ref{proper}, \ref{annulus} and theorem \ref{trapping}, each end $A$ lifts to $E$
a half-plane trapped between two standard ends $E_{(p,q)}$. First we prove the theorem for an end of type $(0,p)$ and then we adapt the arguments to the general case. 

{\bf Ends of type $(0,p)$; the vertical case.} Since $E$ is trapped between two vertical planes and the distance
between two vertical planes tends to zero as $y \to \infty$, we can assume $E \subset {\rm Slab} (\ell/2)$ where
$ {\rm Slab} (c) = \{ (x,y,t)\in \R^3; y>0 \hbox{ and } |t|\leq c \}$, and $\partial E \subset \{ y=y_0 \}$. We will prove
that when $p \in E$, and $y(p) > 3+ y_0$, then the killing field $\frac{\partial}{\partial x }$ is transverse
to $E$ at $p$. Then this sub-end of $E$ is stable hence has bounded curvature. We will prove later that this
gives the theorem in this case. 

Suppose on the contrary that some $p$, with $y(p) > 3+y_0$, has $\frac{\partial}{\partial x}|_p$ in $T_pE$.
The annulus $F_0$ meets $P(0)$ orthogonally and the normal vector to this curve of intersection
is in the plane $P(0)$ and takes all directions in this plane as one goes once around the curve.

Since $\frac{\partial}{\partial x} |_p \in T_pE$, the normal vector to $E$ at $p$ is in the plane $P(x=x(p))$.
Thus we can translate $F_0$ to $p$ (call $F_0$ this translated $F_0$) to be tangent to $E$ at $p$.
By a translation of less than $\ell/2$ we can assume $x(p)=0$, so now $E \subset {\rm Slab} (\ell)$.
Recall that $\gamma_1$ is the geodesic joining the centers of the boundary circles of $F_0$ and
$q_1=\gamma_1 \cap P(0)$. Write $q_1=(0,y_1,0)$ with $y_1>2+y_0$.

The convex hull of the foliation of ${\rm Tub}^- (F_0) \cup {\rm Tub}^+ (F_0) \cup F_0$ has $y$ coordinate at least the minimum of the $y$-coordinate of the boundary circles of $F(t)$ i.e. $y \geq y_0 +1/2$ on the convex hull.
Now we proved in the section \ref{annuli}, that this foliation contains a family of geodesic balls $B_\rho(q)$ of radius $\rho>0$ centered at points $q \in S_+ \cup S_-$. We choose this constant $\rho$ such that 
$$ 3 \rho < {\rm dist} (F_0, \gamma_1) \hbox{ and } 4 \rho <1.$$
Each such  geodesic ball $B(q)$ contains a compact annulus $\calC_\ell (q)$ bounded by geodesic circles of radius $\delta$ contained in $P(\ell)$ and $P(-\ell)$.

\vskip 0.5cm
{\it Step 1: Construction of arcs on $E$.} We know by proposition \ref{componentcatenoid}, that there are at
least two connected components $\Sigma_1, \Sigma_2$ of $E-F_0$ that have $p$ in their closure,
and $\Sigma_1, \Sigma_2 \subset F_0^-$. Clearly, by the maximum principle, each of $\Sigma_1,\Sigma_2$ intersects each of the catenoids in the local foliation $F_s$ about $F_0$ in $F_0^-$. In particular there is a
$\tilde q \in S_-$ such that ${\calC}_\ell(\tilde q) \cap \Sigma_1 \neq \emptyset$. Now translate ${\calC}_\ell(\tilde q)$
along the geodesic joining $\tilde q$ to $q_1$ and apply the Dragging lemma to obtain a point
$ p_1 \in \Sigma_1 \cap {\calC}_\ell (q_1) \subset B_{\rho} (q_1)$.

The same argument gives a point $p_2 \in \Sigma_2 \cap {\calC}_\ell (q_1)$. Recall that $p_1$ and $p_2$ can not be joined by an arc in $E \cap F_0^-$ (we will use this later).
Now we construct a loop $\mu$ in $E$.

For a value $k_0 \in \N$ which will be defined in step 2, we consider $\Gamma _+$ to be the euclidean segment joining $q_1= (0, y_1,0)$ to  $(0  , y_1, k_0 h + 2 \rho)$, together
with the segment joining $(0, y_1 , k_0 h + 2 \rho)$ to $z= (0, y_0, k_0 h +2 \rho)$.  
We will connect the point $p_1$ and $p_2$ by an arc in 
$E$ which stays in a tubular neighborhood of $\Gamma_+ \cup \partial E$. 
We note by ${\rm Tub}_\rho (\Gamma_+)$ the tubular neighborhood of geodesic radius $\rho$ along $\Gamma_+$.
We parametrize  the curve $\Gamma _+$ in a piecewise ${\calC}^1$-monotone manner by $q( \bar t), 0 \leq \bar t \leq 1$  and
we move $B_\rho (q_1)$ along  $q( \bar t)$, from $q_1$ to $z= (0, y_0, k_0 h +2 \rho)$, by $B_\rho (q(\bar t))$.
Each ball $B_\rho (q(\bar t))$, $q \in \Gamma _+$ contains the catenoid ${\calC}_\ell (q(\bar t))$
 and the Dragging lemma then gives two continuous paths $\sigma_1^+ (\bar t), \sigma _2^+ (\bar t)$
starting at $p_1$,$p_2$ respectively such that $\sigma_i^+ (\bar t) \in E$ for $0 \leq \bar t \leq 1$.

We apply the Dragging lemma up to the value $q(1)=z$ and $ \sigma_i^+(1) \in B_\rho(z)$ for $i=1,2$. Since  $\tilde p_1= \sigma_1^+ (1)$ and $\tilde p_2= \sigma_2^+ (1)$ are in $\partial E \cap B_\rho (z)$, we can find a path $\sigma_{12}^+$ in $\partial E$ from $\tilde p_1$ to $\tilde p_2$. We have
$t(\tilde p_1), t(\tilde p_2) \in [k_0 h + \rho, k_0 h+ 3\rho]$. We will prove in step 2, that we can find
a path $\sigma_{12}^+ \in \partial E$  from $\tilde p_1$ to $\tilde p_2$ such that for all $p \in \sigma_{12}^+$, $t(p) \in [\rho, k_0 h + 3\rho]$.

Assuming this, we have constructed a path $\mu ^+$ in $E$ from $p_1$ to $p_2$ which is
$$\mu ^+ = \sigma_1^+ \cup \sigma_{12}^+\cup \sigma_2^+.$$

The arcs  $\sigma_1^+ (\bar t), \sigma_2^+ (\bar t)$ are contained in $T_\rho (\Gamma_+)$. The arcs of $\sigma_1 ^+ $ and $\sigma_2 ^+ $ from $p_1$ to $F_0$ and $p_2$ to $F_0$ are disjoint (see proposition \ref{componentcatenoid}) since 
$\sigma_1^+ \subset \Sigma_1$ and $\sigma_2^+ \subset \Sigma_2$ in ${\rm Tub}^- (F_0)$.

Moreover the paths are quasi-monotone along the segment of $\Gamma _+$ in ${\rm Tub} (\Gamma _+)$: once
the catenoids ${\rm C}_\ell (q) , q \in \Gamma _+$ have advanced along $\Gamma _+$ a distance $2\rho$, the paths $\sigma_1^+$ and $\sigma_2^+$ do not return to the $\rho$-ball where they started.

If the arcs $\sigma_1^+ (\bar t)$ and $\sigma_2^+ (\bar t)$ remain disjoint for $\bar t \leq 1$, we do not change $\mu^+$.
If the arcs intersect then at the first point of intersection $p_3$ we replace $\mu^+$ by the path on $\sigma^+_1$ from $p_1$ to $p_3$ union the path on  $\sigma^+_2$ from $p_2$
to $p_3$. Such a point $ p_3$ is necessarily outside $F_{-1/2}$.

\begin{figure}
\resizebox{16cm}{!}{\input{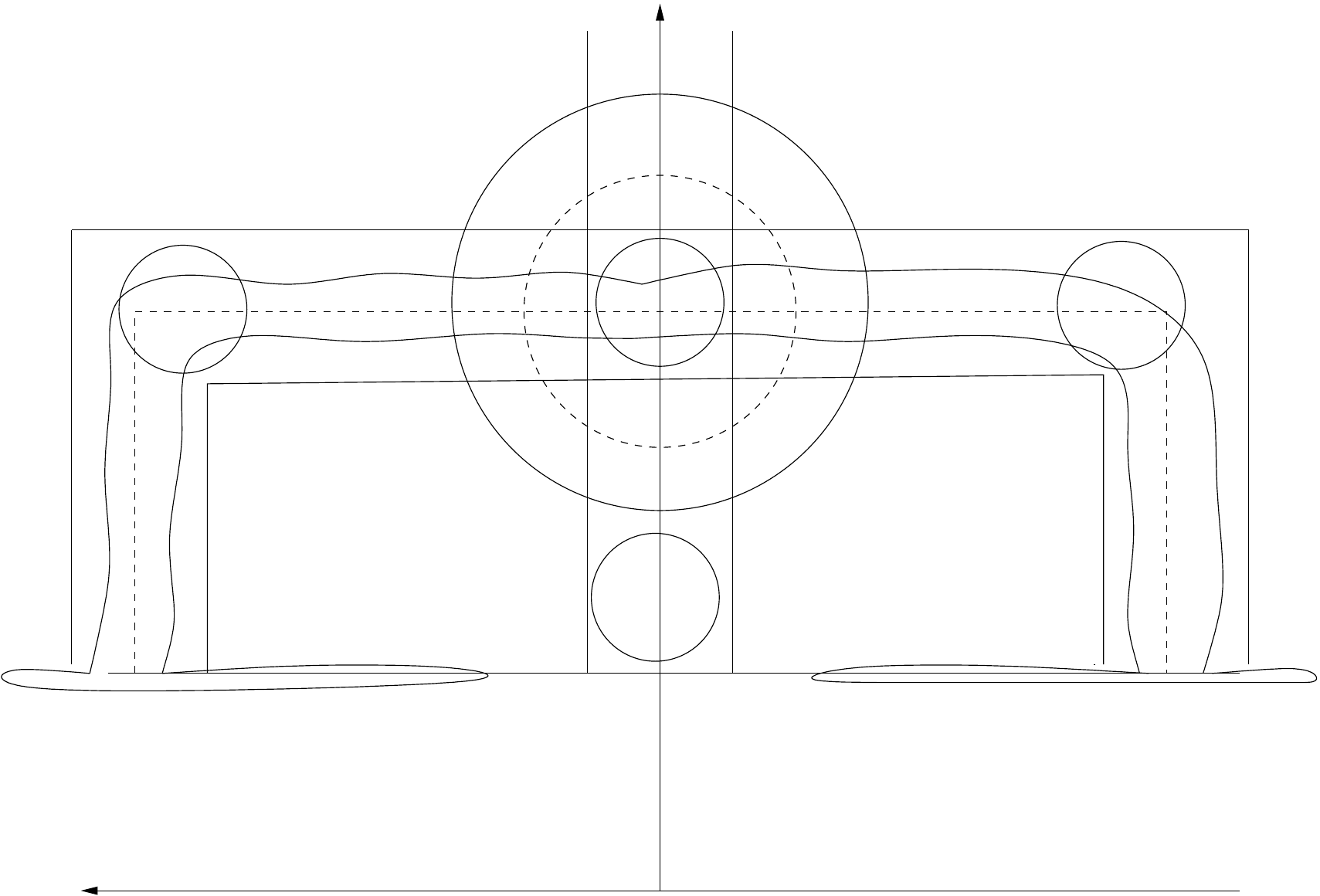_t} }
\caption{Construction of the arc $\Gamma$ }
\label{fig16}
\end{figure}

{\it Step 2: The boundary $\partial E$.} Now we study the boundary of the annulus and the function $t : \partial E \to \R$ the restriction of the third coordinate in the model of the half-plane.  We parametrize the boundary curve $\partial E$ by the immersion $C: \R \to \HY^2 \times \R, C(s)=(x(s),y_0,t(s))$
with period 
$$C(s+1)= (x(s+1), y_0, t(s+1)) \longrightarrow (x(s), y_0, t(s)+h).$$
The diameter is defined by
$$G:=\sup_{s_1,s_2 \in [0,1]} |t(s_1)-t(s_2)|$$
and choose $k_0\in \N$ such that $K=k_0 h \geq G$. We consider the intersection of $\partial E$ with a transverse plane to the curve
$P(\alpha):=\{ (x,y,t)\in \R^3; y=y_0, t=\alpha \}$. Since $C$ is a proper immersed curve, we have a finite number of intersection 
points
$$C(s) \cap P (\alpha)= \{ C(s_1),...,C(s_\ell) \}.$$
We claim that $(s_i)_{1\leq i \leq \ell} \in [ s_1-k, s_1 + k]$. To see this we remark that if $s_1 +1+k' \geq s \geq s_1+k' \geq s_1 +k$, we have

$$t(s) - \alpha = t(s) -t(s_1 + k') +  t(s_1 + k') - t(s_1) \geq k'\tau -G \geq k_0\tau -G >0.$$
Hence independently of the choice of $\alpha$, two points of $\partial E$ with the same $t$ coordinate are connected by 
a sub-arc $\Gamma$ of $\partial E$ with $t(\Gamma) \subset [\alpha-K, \alpha + K]$. Two points of $\partial E$
with coordinate $t_1 \leq t_2$ can be connected in $\partial E$ by a sub-arc $\Gamma$ with $x(\Gamma) \in [t_1 -K,t_2+K]$.

\vskip 0.5cm
{\it Step 3: A loop $\mu$ in $E$.} In step 1, we constructed an arc $\mu^+= \sigma_1 ^+ \cup \sigma_{12}^+\cup \sigma_2 ^+ $ which joins the points $p_1$ and $p_2$ and $\mu ^+  \subset {\rm Tub}_\rho (\Gamma _+) \cup \partial E$. Now do this
construction in the half-space $\{x \leq 0\}$ to obtain a path $\mu ^-$ joining $p_1$ to $p_2$ with similar
properties. Let $\Gamma _-$ be the segment from $q_1= (0, y_1,0)$ to  $(0  , y_1,- k_0 h - 2 \rho)$, together
with the segment joining $(0, y_1 ,-  k_0 h - 2 \rho)$ to $z= (0, y_0,-  k_0 h - 2 \rho)$.  Move $B_\rho (q_1)$
in a ${\calC}^1$-monotone manner along $\Gamma _-$ and we use the Dragging lemma as before to construct arcs $\sigma_1 ^- (\bar t), \sigma_2 ^- (\bar t)$ in $E$. We note by ${\rm Tub}_\rho (\Gamma _-)$ the tubular neighborhood of geodesic radius $\rho$ along $\Gamma _-$. We follow the arc 
$\sigma_1 ^- (\bar t), \sigma_2 ^- (\bar t)$ up to points of $\partial E$.  As in step 2, we construct the arc $ \sigma_{12}^-$, so that points
$p\in \sigma_{12}^-$ have coordinate $t(p) \subset [-k_0 h - \rho, -\rho]$.

 Finally we consider; see figure \ref{fig16}
$$\mu ^-= \sigma_1^- \cup \sigma_{12}^- \cup \sigma_2 ^-.$$
and we let $\mu$ be the loop $\mu ^+ \cup \mu ^-$. $\mu$  is contained in ${\rm Tub} _\rho (\Gamma _+ \cup \Gamma _-)$.
If the arcs $\sigma_1^- (\bar t)$ and $\sigma_2^- (\bar t)$ remain disjoint for $\bar t \leq 1$, we do not change $\mu^-$.
If the arcs intersect then at the first point of intersection $p_4$ we replace $\mu^-$ by the path on $\sigma^-_1$ from $p_1$ to $p_4$ union the path on  $\sigma^-_2$ from $p_2$
to $p_4$. Such a point $ p_4$ is necessarily outside $F_{-1/2}$.

The end $E$ is an immersed half-plane $X: \Omega=\{(u,v) \in \R^2; v \geq 0\} \to \HY^2 \times \R$ 
with $X(\Omega) =E$. The loop $\mu \subset E$ is immersed and we denote by $\hat \mu= X^{-1} (\mu)$ the pre-image of $\mu$ in $\Omega$. 

In the Dragging lemma, we  constructed  the arc  $\mu$ locally and then extended it. The 
preimage $\hat \mu$ is locally embedded in $\Omega$. The arc $\hat \mu$ can have   self-intersections.  If $\hat p$ is one of them, we consider the sub-arc $\gamma$ of $\hat \mu$ with end points $\hat p$. This sub-arc $\gamma$ bounds a disk in $\Omega$.
We  remove these sub arcs to  obtain a piecewise $\calC^1$ connected curve in $\Omega$ without self-intersecting points. This defines a closed Jordan curve which bounds a disk $D$ in $\Omega$. The immersion $X(D)$ is a minimal disk in $\HY^2 \times \R$ with boundary an immersed connected curve contained in ${\rm Tub}_\rho (\Gamma _+ \cup \Gamma _-) \cup \partial E$. Now we analyze the geometry of the disk $X(D)$.

Consider the plane defined by $P(0)= \{ (x,y,t)\in \R^3; y \leq y_0 \hbox{ and } t=0 \}$. This plane
separates  $\Gamma _+ \cup \Gamma _-$ in two connected components.

We denote by $\tilde \mu =\partial X(D)$ the boundary of the minimal disk. Let  
 $\tilde \mu _1=\tilde \mu  \cap (\sigma _1^+ \cup \sigma _1 ^-)$  and  $\tilde \mu _2=\tilde \mu  \cap (\sigma _2^+ \cup \sigma _2 ^-)$ be  the connected components  of the loop in $E$
 containing $p_1$ and $p_2$ respectively. The end points of $\tilde \mu _1, \tilde \mu _2$
 are in different half-spaces determined by $\{x=0\}$ (one end point has $x>\rho$ and the other
 $x <-\rho$). Thus the plane $P(0)$ intersects  $\tilde \mu _1$ and  $\tilde \mu _2$, each one in an odd number of points.

Now we will obtain a contradiction by proving that $P(0) \cap \tilde \mu _1$ is an even number of points. 
One translates  horizontal catenoids $\calC_\ell (q) $, $q \in E(0) \cap P(0)$, starting far from $\mu$ to see that before $\calC_\ell $ touches a $\rho$-tubular neighborhood of $\mu$, one does not touch the disk $X(D)$. Hence  $X(D) \cap P(0)$ is contained in $F_0^- \cap {\rm Tub}_\rho (\Gamma_+ \cup \Gamma_-)$.

In $F_0^-$ the sub arc  $\tilde \mu _1 \subset \Sigma_1$ and  $\tilde \mu _2 \subset \Sigma_2$ cannot be connected. 
Hence a connected arc  $\gamma \subset X(D) \cap P(0)$
must have end points either in $\Sigma_1$ or in $\Sigma_2$. This means that there are an  even number of point of $\tilde \mu _1 \cap P(0)$   on $\partial D= \mu$. This contradicts the odd intersection number of each arc with $P(0)$.

This proves that $E$ is a graph for $y \geq y_0 + 2R$.

{\bf Ends of type $(p,q)$, tilted planes.} Next we prove the theorem when $E$ is trapped between
two tilted (not horizontal) planes $E(p,q)$. We can suppose $E$ is contained a tilted slab $S$ of the form, for some $c_1 >0$:
$$S= \{ (x,y,t) \in \R^3; y >0 \hbox{ and } -c_1 \leq p \tau t - qh x \leq c_1 \}.$$
Since $S$ is converging to a vertical slab as $y \to \infty$, there is a $y_0 >1$ so that if $p \in E$,
$y(p) \geq y_0$, then the catenoid $C_\ell (p)$ in $B_\rho (p)$, has both its boundary circles outside
of $S$. To see this, we use an isometry which leaves the slab $S$ invariant and takes $p \in E$ to a point 
$\tilde p=(x,y,0)$. Observe that $S \cap \{ |t| \leq 1 \}$ is in a vertical slab bounded by
$P(-c_2)$ and $P(c_2)$, where $c_2$ depends on $p\tau$ and $q h$. Then for any point $\tilde p=(x,y,0)$ with
$|x| \leq c_2$,  $\calC_\ell (\tilde p) \subset B_\rho (\tilde p)$ has boundary circles outside of the slab bounded by $P(d)$ and $P(-d)$ for $d \geq c_2$, and $y$ greater than some $y_0$ (using that $(x,y,t) \to (\lambda x, \lambda y,t)$ is an isometry). This property is invariant by changing $\tilde p=(x,y,0)$ to $p=(x+p\tau, y ,t +qh)$. 

We will prove that a sub-end of $E$ is transverse to $\frac{\partial}{\partial x}$ for large $y$. Suppose
this is not the case. We proceed exactly as in the case $E$ is trapped between vertical planes
to find $p_1, p_2 \in E \cap B_\rho (q_1)$, $q_1$ the center of a horizontal catenoid $F_0$, and $p_1,p_2$ can not be joined by a path in $E$ that is inside $F_0$. The proof is modified in our choice of 
$\Gamma=\Gamma_+ \cup \Gamma_-$, and a loop in $E$ passing through $p_1$ and $p_2$.

We denote by $\vec u $ the unit vector director
of the straight line $\{ (x,y,t) \in \R^3;  p\tau t - qh x=0 \hbox{ and } y=y_0 \}$.
For a value $k_0 \in \N$ which depends on the diameter of the periodic boundary curve, 
we consider $\Gamma _+$ be the euclidean segment joining $q_1= (0, y_1,0)$ to  $q_1 + (k_0 h + 2 \rho) \vec u$, together
with the segment joining $q_1 + (k_0 h + 2 \rho) \vec u$ to $z= (0, y_0,0) +( k_0 h +2 \rho)\vec u$.
We connect the point $p_1$ and $p_2$ by an arc in  $E$ which stays in a tubular neighborhood of 
$\Gamma_+ \cup \partial E$.

Let $\Gamma _-$ be the segment from $q_1= (0, y_1,0)$ to  $(0  , y_1,0) - (k_0 h + 2 \rho) \vec u$, together
with the segment joining $(0, y_1 ,0) -  (k_0 h  + 2 \rho)\vec u$ to $z= (0, y_0,0)- ( k_0 h + 2 \rho) \vec u$. 
We connect the point $p_1$ and $p_2$ by an arc in  $E$ which stays in a tubular neighborhood of 
$\Gamma_- \cup \partial E$.

Then the argument is the same to obtain a contradiction with $\Gamma= \Gamma_+ \cup \Gamma_-$. 

{\bf Ends of type $(p,0)$, horizontal planes.}  Let $E$ be a half-plane end (lifting of $A \subset \calM$)
to $\HY \times \R$, between the planes $t=\pm d$, with $\partial E \subset \{ y=1\}$, $\partial E$ invariant
by the isometry $(x,y,t) \to (x+ \tau, y,t)$. By proposition \ref{coordonee3}, we can assume $t \to 0$ on $E$
as $y \to \infty$. So for $y_0$ large, the sub-end of $E$ given by $y \geq y_0$ is between planes $t= \pm c$
for any small $c>0$.

Let $\eta$ be a circle of radius one in $\{t=c\}$ and let $\eta_-$ be $\eta$ translated vertically
to a circle in $\{ t =-c\}$. for $c$ small enough, $\eta \cup \eta_-$ bounds a stable (rotational) annulus $F_0$.
$F_0$ is a bigraph over $\{ t=0\}$. Now we assume $y_0$ chosen so that $E$ is between $t= \pm c$ for $y \geq y_0$
and then $\partial F_0 \subset \{ t = \pm c \}$.

As in section 6, where $E$ was trapped between two vertical planes and $F_0$ was a horizontal catenoid, we define
$B_\rho (q)$, $\calC_\ell$ in the same manner, with $\calC_\ell$ a vertical catenoid. We choose $y_0$ large enough
so that $E$ is between $t= \pm \ell/2$ and $\calC_\ell$ has its boundary circles in $t= \pm \ell$ for $y \geq y_0 +3$.

Suppose $p$ is in $E$ , $y(p) \geq y(0) + 3$, and $E$ has a vertical tangent plane at $p$.
Then one places a vertical catenoid $F_0$ to be tangent to $E$ at $p$
(after a small translation) and one obtains $p_1,p_2 \in E \cap B _ \rho (q)$, $q$ the center of $F_0$, such that
$p_1,p_2$ can not be joined by a path in $E$ that is inside $F_0$.

For a value $k_0 \in \N$ which depends on the diameter of the periodic boundary curve, 
we consider $\Gamma _+$ be the euclidean segment joining $q_1= (0, y_1,0)$ to  $(k_0 h +2 \rho,y_1,0)$, together
with the segment joining $(k_0 h +2 \rho,y_1,0)$ to $z= (k_0 h +2 \rho, y_0,0)$.
We connect the points $p_1$ and $p_2$ by an arc in  $E$ which stays in a tubular neighborhood of 
$\Gamma_+ \cup \partial E$.

Let $\Gamma _-$ be the segment from $q_1= (0, y_1,0)$ to  $( -k_0 h - 2 \rho , y_1,0) $, together
with the segment joining $( -k_0 h - 2 \rho , y_1,0) $ to $z= ( -k_0 h - 2 \rho , y_0,0) $. 
We connect the point $p_1$ and $p_2$ by an arc in  $E$ which stays in a tubular neighborhood of 
$\Gamma_- \cup \partial E$. We apply now the same argument to obtain a contradiction.

{\bf  Finite total curvature.} 
We proved that a minimal annulus is trapped in Slab and is
a killing multigraph outside a compact set $K_0 \subset M \times \SY^1$. 
These graphs are stable, hence they have bounded Gaussian curvature. 
They are contained in a euclidean slab whose hyperbolic width tends to zero at infinity. 

In the horizontal case with $A$ asymptotic to $A_{(p,0)}$, the end $A$ has a limit for its third coordinate. 
Since the curvature is bounded, $A$ is a vertical graph of a function $f : A_{(p,0)} \to \R$, with $f$ converging to $0$ in a 
$\calC^2$ manner. The end $A$ is converging to the cusp $\calC \times \{0\}$ and the curve $\bbT (y) \cap A=\gamma (y)$ is a topological circle converging to a finite covering of a quotient $c(y)/ [\psi]$. The curve $\gamma (y)$ has uniform bounded curvature and its length goes to zero. Thus  $\int _{\gamma (y)} k_g ds \to 0$ as $y \to \infty$.

In the case of ends of type $(0,q)$ and $(p,q)$, the ends are horizontal multi-graphs on some
$A(0,p)$. Since $A$ converges in a $\calC ^2$ manner to $A(p,0)$, the  curves $\gamma (y)= \bbT (y) \cap A$ converge 
to a finite covering of a quotient of a vertical geodesic by the translation $T(h)$. This implies that the curvature of 
$\gamma (y)$ converges uniformly to zero as $y \to \infty$.

We apply the Gauss-Bonnet formula on an exhaustion of $M \times \S^1$ by a sequence of compact $K_n$, with boundary
of $K_n$  the union of mean curvature one tori $\bbT_1(n), ...,\bbT_k(n)$, in each end ${\calM} \subset M \times \SY^1$ and $\gamma_{k,n} = \bbT_k(n) \cap \Sigma$.

$$\int_{K_n \cap S} K dA + \int _{\gamma (k,n)} k_g ds = 2 \pi \chi (\Sigma).$$
When $n \to \infty$, the integral of the curvature on $\gamma (k,n)$ tends to zero and we obtain the finite total curvature
formula
$$\int_{ S} K dA = 2 \pi \chi (\Sigma).$$

\section{Proof of the theorem in $N$}

Now we complete the proof of the Theorem 1.1 when the ambient space is $N$. The idea is the same as in $M \times \SY^1$. Let $A$ be an annular end in ${\calM}(-1)$, minimal and properly immersed. By Lemma 4.1
(the same proof) we can suppose $A \subset \cup_{y \geq 1} \bbT (y)$, $\partial A$ is an immersed closed curve
and $A$ is transverse to $\bbT (1)$ along $\partial A$. Let  $E$ be a connected lift of $A$ to $\HY^3$,
so $\partial E \subset \{ (x,1,t) \in \R^3\}$. Observe that $E$ is a half-plane, not an annulus. Suppose, on the
contrary that $E$ is an immersed annulus, so $\partial E \subset \{ (x,1,t) \in \R^3\}$ is compact. Let $D$
be the convex hull of $\{ (x,0,t) \in \R^3; (x,1,t)\in \partial E \}$ in the $y=0$ plane. Let $L$ be a line of the plane
$Q=\{ y=0\}$, disjoint from $D$.

Let $C$ be a small circle in $Q$ in the half-space $\calH$ of $Q-L$ disjoint from $D$. $C$ bounds a totally
geodesic hyperbolic plane in $\HY^3$ (it is a hemisphere orthogonal to $Q$ along $C$ in our model). For $C$
small, this plane is disjoint from $E$. Let the circle $C$ grow in $\calH$ and converge to $L$. By the maximum
principle, there is no first contact point of the planes bounded by these circles with $E$ (the planes do not touch
$\partial E$). Since the hyperbolic planes bounded by the circles converge to $L \times \R^+$, it follows
that $E$ is on one side of $L \times \R^+$. Hence $E$ is contained in the cylinder ${\rm Cyl}= \{ (x,y,t) \in \R^3 ; (x,0,t) \in 
\partial D, y >0 \}$.

For $y$ large, the diameter of ${\rm Cyl}$ tends to zero; i.e. the diameter of ${\rm Cyl} \cap \{ y ={\rm const} \}$ tends to zero. So we could
touch $E$ by a catenoid at an interior point of $E$; a contradiction.

Now we know $E$ is a half-plane. After an isometry of $\HY^3$, we can assume $\partial E$ is invariant
under the parabolic isometry: $(x,y,t) \to (x + \tau, y,t)$, and $\partial E \subset \{ y=1\}$. so the $t$ coordinate is bounded on 
$\partial E$. The same convex hull argument as in the previous annular case, then shows the $t$
coordinate has the same bound on $E$; $|t| \leq c$, for some $c>0$, (one takes $L$ to be a horizontal line in $\{ y=0\}$,
above height $c$, and considers circles $C$ in $\{y=0\}$ above $L$. When $C$ converges to $L$
in $\{ y=0 \}$, the hyperbolic planes in $\HY^3$ bounded by $C$, are disjoint from $E$ and converge to
$L \times \R^+$). So $E$ is trapped between two horizontal planes $t = \pm c$.

The distance between these horizontal planes tends to zero as $y \to \infty$. Now we will prove that for
$y$ large, the killing field $\frac{\partial}{\partial t}$ is transverse to $E$. hence a sub-end of $E$ has bounded
curvature.  This will complete the proof as follows.  The sub-end is a vertical graph over the plane $t=0$, that converges to zero in the ${\calC}^2$-topology.
The graph function is the distance to the plane $t=0$. Thus the geodesic curvature of the curve in $E$, given
by ${\rm Cyl}=E \cap \{ y={\rm constant}\}$ is bounded. Also the length of this curve $C_y$ tends
to zero in $C_y$ modulo $(x,y,t) \to (x+ \tau, y ,t).$ This yields the formula for the finite total curvature of
$\Sigma$ in $N$: apply Gauss-Bonnet to the compact part of $\Sigma$ bounded by the curves $C_y$ in the ends
and let $y \to \infty$.

Thus it suffices to prove $E$ is transverse to $\frac{\partial }{\partial t}$ for $y$ large. The proof of this is the same as in section 8, for an end trapped between two horizontall planes.  More precisely, for an end $E$ in $\calM$ between two horizontal planes that are close,  the distance between
the planes $|t| =c$ tends to zero as $y \to \infty$, so one can put a vertical catenoid $F_0$, whose boundary circles are of radius one and in the horizontal planes $|t|=d >c$, when the center $q$ of $F_0$ has $y(q)$ larger than some
$y_0$.

One chooses $\rho,\ell$ as in Sections 6 and 8, and using the Dragging lemma, one shows that if $E$
has a vertical tangent plane at $p$, $y(p)$ large, then one finds $p_1,p_2 \in B_\rho (q) \cap E$, that can not be
joined by a path in $E \cap F_0^-$. One defines $\Gamma= \Gamma_+ \cup \Gamma_-$ and the same proof now gives a contradiction.

\end{document}